\documentclass[a4paper,10pt,reqno]{amsart}

\UseRawInputEncoding

\usepackage{amsmath}
\usepackage{cases}

\usepackage{amsfonts}
\usepackage[colorlinks,linkcolor=blue,citecolor=blue]{hyperref}
\usepackage{latexsym, amssymb, amsmath, amsthm, bbm}
\usepackage[all]{xy}
\usepackage{pgfplots}
\usepackage{enumerate}

\DeclareSymbolFont{EulerExtension}{U}{euex}{m}{n}
\DeclareMathSymbol{\euintop}{\mathop} {EulerExtension}{"52}
\DeclareMathSymbol{\euointop}{\mathop} {EulerExtension}{"48}

\allowdisplaybreaks[4]

\setlength{\textwidth}{5.6truein}
\setlength{\textheight}{8.2truein}
\setlength{\topmargin}{-0.13truein}
\addtolength{\parskip}{5pt}

\def \id{\operatorname{id}}
\def \Id{\operatorname{Id}}

\def \C{\mathcal{C}}

\def \e{\varepsilon}
\def \M{\mathrm{M}}

\def \Z{\mathbb{Z}}

\def \k{\Bbbk}

\def \dim{\operatorname{dim}}

\def \Hom{\operatorname{Hom}}

\def \Id{\operatorname{Id}}

\def \Rep{\operatorname{Rep}}

\def \C{\mathcal{C}}
\def \D{\Delta}

\def \e{\varepsilon}
\def \M{\mathrm{M}}

\def \C{\mathcal{C}}
\def \D{\mathcal{D}}

\def \M{\mathfrak{M}}

\def \Z{\mathcal{Z}}

\def \dast{{\ast\ast}}

\def \1{\mathbf{1}}
\def \Rep{\mathsf{Rep}}
\def \op{\mathrm{op}}
\def \cop{\mathrm{cop}}

\usepackage{mathtools}
\usepackage{stmaryrd}

\def \pams{partially admissible mapping system}
\def \pd{\boldsymbol}
\def \YD{\mathfrak{YD}}

\def \la{\langle}
\def \ra{\rangle}

\def \btl{\raisebox{0.2ex}{\,${\scriptstyle \blacktriangleleft}$}\,}
\def \btr{\raisebox{0.2ex}{\,${\scriptstyle \blacktriangleright}$}\,}

\def \rev{\mathrm{rev}}

\numberwithin{equation}{section}

\newtheorem{theorem}{Theorem}[section]
\newtheorem{lemma}[theorem]{Lemma}
\newtheorem{proposition}[theorem]{Proposition}
\newtheorem{corollary}[theorem]{Corollary}
\newtheorem{definition}[theorem]{Definition}

\newtheorem{remark}[theorem]{Remark}
\newtheorem{question}[theorem]{Question}

\newtheorem{notation}[theorem]{Notation}

\begin{document}

\title[The quantum double realized via partial dualization and representations]{The quantum double of Hopf algebras realized via partial dualization and the tensor category of its representations}

\author[J.-W. He]{Ji-Wei He}
\address{School of Mathematics, Hangzhou Normal University, Hangzhou 311121, China}
\email{jwhe@hznu.edu.cn}

\author[X. Kong]{Xiaojie Kong}
\address{School of Mathematics, Hangzhou Normal University, Hangzhou 311121, China}
\email{xjkong@stu.hznu.edu.cn}

\author[K. Li]{Kangqiao Li${}^\dag$}
\address{School of Mathematics, Hangzhou Normal University, Hangzhou 311121, China}
\email{kqli@hznu.edu.cn}

\thanks{2020 \textit{Mathematics Subject Classification}. 16T05, 18M05.}

\keywords{Hopf algebra, Quantum double, Partial dualization, Yetter-Drinfeld module, Two-sided two-cosided Hopf module, Tensor category}

\thanks{$\dag$ Corresponding author}


%

\date{}

\begin{abstract}

In this paper, we aim to study the (generalized) quantum double $K^{\ast\cop}\bowtie_\sigma H$ determined by a (skew) pairing between finite-dimensional Hopf algebras $K^{\ast\cop}$ and $H$.
We show that $K^{\ast\cop}\bowtie_\sigma H$ is a left partially dualized (quasi-)Hopf algebra of $K^\op\otimes H$.
Using this formulation, we establish the \textit{tensor equivalences} between the following finite tensor categories
$$\Rep(K^{\ast\cop}\bowtie_\sigma H),\;\;\;\;{}_H\mathfrak{YD}^K,\;\;\;\;
{}^K_K\M^ K_H,\;\;\;\;\text{and}
\;\;\;\;{}^{K^\ast}_{K^\ast}\M^{ H^\ast}_{K^\ast},$$
that is, the categories of (finite-dimensional) representations of $K^{\ast\cop}\bowtie_\sigma H$, relative Yetter-Drinfeld modules over $(H,K)$, and two-sided two-cosided relative Hopf modules.

\end{abstract}

\maketitle

\tableofcontents

\section{Introduction}

The Drinfeld double $D(H)$ of a finite-dimensional Hopf algebra $H$ is an important construction due to Drinfeld \cite{Dri86},
and its theories have been widely developed, as there is a categorical observation in \cite{Kas95} that the category
$\Rep(D(H))$ of finite-dimensional representations of $D(H)$ is braided tensor equivalent to the center of $\Rep(H)$:
\begin{equation}\label{eqn: 1.1}
\Rep(D(H))\approx\Z(\Rep(H)).
\end{equation}

In 1994, Doi and Takeuchi \cite{DT94} constructed a kind of Hopf algebra determined by a skew Hopf pairing, whose properties are studied in \cite{AFG01,LMS06,RS08,Rad12,HS20} etc..
This construction is usually referred to as the (generalized) quantum double, and it is frequently regarded as a generalization of the Drinfeld double.
Since we only study finite-dimensional cases in this paper, where an equivalent formulation can be used as follows:
Let $H$ and $K$ be finite-dimensional Hopf algebras over $\k$ with Hopf pairing $\sigma: K^\ast\otimes H\rightarrow \k$ (inducing Hopf algebra maps $\sigma_l$ and $\sigma_r$). Then it determines the quantum double denoted by $K^{\ast\cop}\bowtie_\sigma H$, which will becomes $D(H)$ if $K=H$ and $\sigma$ is the evaluation.

However, in order to generalize (\ref{eqn: 1.1}) for the case of quantum doubles, or to establish other tensor equivalences from $\Rep(K^{\ast\cop}\bowtie_\sigma H)$, we try to apply the notion of the left partially dualized quasi-Hopf algebra (or left partial dual for short) introduced by Li \cite{Li23}.
This is because a left partial dual of $H$ is categorically Morita equivalent to $H$, meaning that it reconstructs a certain dual tensor category of $\Rep(H)$.
Our first main result is the following one, which is a combination of Theorem \ref{prop: lpd qd} and Proposition \ref{prop: te lDoi} (or Corollary \ref{cor: te lDoi})£®

\begin{theorem}\label{thm: 1.1}
Let $H$ and $K$ be finite-dimensional Hopf algebras over $\k$ with Hopf pairing $\sigma: K^\ast\otimes H\rightarrow \k$.
Then
\begin{itemize}
\item[(1)]
The quantum double $K^{\ast\cop}\bowtie_\sigma H$ is a left partial dual of the tensor product Hopf algebra $K^\op\otimes H$, and consequently,
\item[(2)]
$\Rep(K^{\ast\cop}\bowtie_\sigma H)$ is tensor equivalent to the category ${}^{}_{K^{\ast\cop}}\M^{K^{\ast\cop}\otimes H^\ast}_{K^{\ast\cop}}$ of relative Doi-Hopf modules, where $K^{\ast\cop}$ is regarded as a right $K^{\ast\cop}\otimes H^\ast$-comodule algebra via coaction
$k^\ast \mapsto\sum k^\ast_{(2)}\otimes\big(k^\ast_{(1)}\otimes\sigma_l(k^\ast_{(3)})\big)$.
\end{itemize}
\end{theorem}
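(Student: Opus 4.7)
The plan is to prove part (1) by realizing $K^{\ast\cop}\bowtie_\sigma H$ explicitly as a left partial dual of $K^\op\otimes H$, and then to deduce part (2) from the general Morita-type tensor equivalence attached to left partial dualization.

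For part (1), I would start from the fact, recalled from \cite{Li23}, that a left partial dual of a Hopf algebra $A$ is constructed from a Hopf subalgebra $L\subseteq A$ together with a Hopf projection $\pi:A\to L$: in such a setup $A$ decomposes as a Radford biproduct $R\# L$, and the left partial dual is obtained by replacing $L$ with $L^{\ast\cop}$ and reassembling the product by transferring the structure maps across the duality. For $A=K^\op\otimes H$ the natural choice is $L=K^\op\otimes 1_H$ with $\pi(k\otimes h)=\e(h)(k\otimes 1)$, which expresses $A$ as a trivial Radford biproduct $H\# K^\op$. The skew pairing $\sigma$ enters because, after replacing $L=K^\op$ by $L^{\ast\cop}=K^{\ast\cop}$, the compatibility of the multiplication on $K^{\ast\cop}\otimes H$ with that of the original $K^\op\otimes H$ is forced by $\sigma$: the required cross relations between $K^{\ast\cop}$ and $H$ are encoded through the induced Hopf maps $\sigma_l,\sigma_r$, and one verifies that they coincide verbatim with the Doi-Takeuchi multiplication formulas defining $K^{\ast\cop}\bowtie_\sigma H$.

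For part (2), I would invoke the general principle from \cite{Li23} that if $B$ is a left partial dual of $A$ at the Hopf subalgebra $L$, then $\Rep(B)$ is tensor equivalent to a category of two-sided, one-cosided relative Doi-Hopf modules over $A$: the modules are over $L^{\ast\cop}$ on both sides, the comodules are over $L^{\ast\cop}\otimes A^\ast$, and $L^{\ast\cop}$ becomes a comodule algebra in the canonical way arising from the inclusion $L\hookrightarrow A$ by dualization. With $A=K^\op\otimes H$ and $L=K^\op\otimes 1_H$, this specializes to the category ${}_{K^{\ast\cop}}\M^{K^{\ast\cop}\otimes H^\ast}_{K^{\ast\cop}}$, and the dualized inclusion is realized precisely by $k^\ast\mapsto\sum k^\ast_{(2)}\otimes\big(k^\ast_{(1)}\otimes\sigma_l(k^\ast_{(3)})\big)$: the $K^{\ast\cop}$-factors come from the comultiplication of $K^{\ast\cop}$, while $\sigma_l(k^\ast_{(3)})$ records how the embedding interacts with $H^\ast$ through the pairing.

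The main obstacle is notational and conventional: keeping $K^\op$ versus $K^{\ast\cop}$ straight, choosing between $\sigma_l$ and $\sigma_r$ in the twisted multiplication, and aligning left versus right coactions in the definition of the comodule algebra, all without accidentally flipping a sign or an order of factors. Once these conventions are fixed, the verification for (1) is a direct if notationally heavy computation using the skew-pairing axioms for $\sigma$, and (2) then follows formally from the partial-dualization theorem of \cite{Li23}, so the conceptual core of the argument lies in the explicit identification in (1) of the twisted biproduct produced by partial dualization with the Doi-Takeuchi quantum double.
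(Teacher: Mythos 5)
Your high-level strategy matches the paper's: realize $K^{\ast\cop}\bowtie_\sigma H$ as a left partial dual of $K^\op\otimes H$ and then quote the reconstruction theorem of \cite{Li23} for part (2). But the concrete realization you propose in part (1) does not work, and the gap is exactly where the content of the theorem lies. You take the Hopf subalgebra $L=K^\op\otimes 1_H$ with the \emph{trivial} projection $\pi(k\otimes h)=\e(h)(k\otimes 1)$, i.e.\ the untwisted decomposition of $K^\op\otimes H$, and then assert that the pairing $\sigma$ is ``forced'' by compatibility after dualizing $L$. It is not: a left partial dual is completely determined by the input data (the coideal subalgebra embedding, the module coalgebra projection, and the partially admissible mapping system), and with your untwisted choices the output is just the tensor product Hopf algebra $K^{\ast\cop}\otimes H$ with no cross relations; correspondingly, the coaction in part (2) would come out as $k^\ast\mapsto\sum k^\ast_{(2)}\otimes(k^\ast_{(1)}\otimes\e)$ rather than the stated $k^\ast\mapsto\sum k^\ast_{(2)}\otimes(k^\ast_{(1)}\otimes\sigma_l(k^\ast_{(3)}))$. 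The pairing has to be built into the data from the start. The paper's key constructions are the $\sigma$-twisted embedding $\iota:H\rightarrowtail K^\op\otimes H$, $h\mapsto\sum\sigma_r(S^{-1}(h_{(2)}))\otimes h_{(1)}$ (so $B=H$ sits inside $K^\op\otimes H$ as a \emph{left coideal subalgebra}, not a Hopf subalgebra) and the twisted projection $\pi:k\otimes h\mapsto k\sigma_r(h)$ (a right module coalgebra map that is \emph{not} an algebra map), with $\mathrm{Im}(\iota)$ checked to be exactly the $\pi$-coinvariants. Your framing via ``Hopf subalgebra plus Hopf projection plus Radford biproduct'' is therefore not the right model for this example.

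Two further steps in the paper's argument are absent from your outline. First, one must actually exhibit a partially admissible mapping system $(\zeta,\gamma^\ast)$ for $\iota$ (the paper takes $\zeta(k\otimes h)=\e(k)h$ and $\gamma(k)=k\otimes 1$ and verifies the axioms, including $(\iota\circ\zeta)\ast(\gamma\circ\pi)=\id$). Second, a left partial dual is a priori only a \emph{quasi}-Hopf algebra; one needs the criterion that $\zeta$ and $\gamma$ are Hopf algebra maps to conclude the associator is trivial and the comultiplication is the tensor product one, before the multiplication computation can identify $K^{\ast\cop}\#H$ with the Doi--Takeuchi double. Once part (1) is correctly established with the twisted $\pi$, your part (2) is essentially the paper's: apply the reconstruction equivalence, with the comodule algebra structure on $K^{\ast\cop}$ obtained by dualizing the right $K^\op\otimes H$-module structure $l\otimes(k\otimes h)\mapsto kl\sigma_r(h)$ on $K^\op$, which is where the $\sigma_l(k^\ast_{(3)})$ term genuinely comes from.
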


In fact, the tensor equivalence stated in Theorem \ref{thm: 1.1}(2) above is considered to be the reconstruction of the left partial dual $K^{\ast\cop}\bowtie_\sigma H$, following \cite{Li23}. As its applications, another goal of this paper is then to obtain further tensor equivalences from $\Rep(K^{\ast\cop}\bowtie_\sigma H)$, which can also be analogues of the equivalences from $\Rep(D(H))$.

Let us recall some identifications of the (braided) tensor category $\Rep(D(H))$ in the literatures. Majid \cite{Maj91}
showed that it is isomorphic to ${}_H\YD^H$, which is known as the category of finite-dimensional (left-right) Yetter-Drinfeld modules over $H$ according to Radford and Towber \cite{RT93}.
Later, Schauenburg \cite{Sch94} provided tensor equivalences from ${}^H\YD_H$ (or equivalently, ${}_H\YD^H$ and ${}_H^H\YD$) to the category ${}_H^H\M{}_H^H$ of two-sided two-cosided Hopf modules over $H$, and this result actually holds in a symmetric monoidal category with equalizers. Moreover in 2002, he proved in \cite{Sch02} a generalization to the case when $H$ is a quasi-Hopf algebra, by extending a structure theorem of Hausser and Nill \cite{HN99}.

Now we introduce our conclusion(=Theorem \ref{them: cc}) on the quantum double which are analogous to the results mentioned above£®
\begin{theorem}\label{thm: 1.2}
Let $H$ and $K$ be finite-dimensional Hopf algebras over $\k$ with Hopf pairing $\sigma: K^\ast\otimes H\rightarrow \k$.
Then there are tensor (not merely $\k$-linear abelian) equivalences and the last two categories are related by a tensor isomorphism:
\begin{equation}\label{eqn: 1.2}
\big({}^K_K\M^ K_H,\;\square_K\big)^\vee
\approx\big({}^{K^\ast}_{K^\ast}\M^{ H^\ast}_{K^\ast},\;\otimes_{K^\ast}\big)
\approx\Rep(K^{\ast\cop}\bowtie_\sigma H)\cong{}_H\mathfrak{YD}^K,
\end{equation}
where
\begin{itemize}
\item
The categories ${}^K_K\M^ K_H$ and ${}^{K^\ast}_{K^\ast}\M^{ H^\ast}_{K^\ast}$ consist of two-sided two-cosided ``relative'' Hopf modules induced by the Hopf algebra maps $\sigma_r$ and $\sigma_l$ respectively, and $(-)^\vee$ denotes the category with reversed arrows;
\item
The category ${}_H\mathfrak{YD}^K$ consists of ``relative'' (left-right) Yetter-Drinfeld modules induced by Hopf algebra map $\sigma_r$ (as a special situation of crossed modules introduced in \cite{CMZ97}).
\end{itemize}
Detailed structures of these categories may be found in Subsections \ref{subsection 2.2} and \ref{subsection 4.1}.
\end{theorem}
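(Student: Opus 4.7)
The plan is to establish the three (quasi-)equivalences of (\ref{eqn: 1.2}) in order from right to left, using three different arguments that all rely on finite-dimensional linear duality together with Theorem \ref{thm: 1.1}.

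For the rightmost isomorphism $\Rep(K^{\ast\cop}\bowtie_\sigma H) \cong {}_H\YD^K$, I would unpack the defining multiplication of the generalized quantum double and show that a left module over $K^{\ast\cop}\bowtie_\sigma H$ is precisely the simultaneous data of a left $H$-module and a right $K$-comodule obeying the relative Yetter-Drinfeld compatibility induced by $\sigma_r$. The point is that, since $K$ is finite-dimensional, left $K^{\ast\cop}$-modules are the same as right $K$-comodules; the bowtie relation in $K^{\ast\cop}\bowtie_\sigma H$ (which is built from the pairing $\sigma$) then translates exactly into the cross compatibility defining ${}_H\YD^K$. The tensor structure on both sides is diagonal in the $H$-action and $K$-coaction, so it transports automatically once the object-level identification is made.

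For the middle equivalence $\Rep(K^{\ast\cop}\bowtie_\sigma H) \approx {}^{K^\ast}_{K^\ast}\M^{H^\ast}_{K^\ast}$, the strategy is to appeal directly to Theorem \ref{thm: 1.1}(2) and then recognise the relative Doi-Hopf category ${}_{K^{\ast\cop}}\M^{K^{\ast\cop}\otimes H^\ast}_{K^{\ast\cop}}$ as a two-sided two-cosided relative Hopf category. A $K^{\ast\cop}$-bimodule is nothing but a $K^\ast$-bimodule, since only the algebra structure of $K^{\ast\cop}$ matters; a right $K^{\ast\cop}\otimes H^\ast$-comodule decomposes into a right $K^{\ast\cop}$-comodule (equivalently a left $K^\ast$-comodule) and a right $H^\ast$-comodule. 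The explicit comodule-algebra structure on $K^{\ast\cop}$ from Theorem \ref{thm: 1.1}(2), with the twist by $\sigma_l$, should match the compatibility linking the bimodule, the left $K^\ast$-coaction, and the right $H^\ast$-coaction in ${}^{K^\ast}_{K^\ast}\M^{H^\ast}_{K^\ast}$. Both categories are monoidal via tensor product over $K^\ast$, so the tensor structure is preserved.

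For the leftmost tensor equivalence $\big({}^K_K\M^K_H,\,\square_K\big)^\vee \approx \big({}^{K^\ast}_{K^\ast}\M^{H^\ast}_{K^\ast},\,\otimes_{K^\ast}\big)$, the natural candidate is the linear dual $(-)^\ast = \Hom_\k(-,\k)$, which is a contravariant equivalence on finite-dimensional vector spaces and becomes a covariant equivalence after reversing arrows in the source. I would check that each of the four structures dualizes correctly: a left (resp.\ right) $K$-comodule becomes a right (resp.\ left) $K^\ast$-module, a left $K$-module becomes a left $K^\ast$-comodule, and a right $H$-module becomes a right $H^\ast$-comodule, so that $M^\ast$ lies in ${}^{K^\ast}_{K^\ast}\M^{H^\ast}_{K^\ast}$. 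The monoidal compatibility reduces to the classical identification $(M \square_K N)^\ast \cong M^\ast \otimes_{K^\ast} N^\ast$, valid because the cotensor is an equalizer whose linear dual is the tensor coequalizer.

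The main technical obstacle I anticipate is not any single deep step, but rather the careful bookkeeping of sides (left versus right, $\op$ versus $\cop$) across the three equivalences. The pairing $\sigma$, the two induced Hopf maps $\sigma_l,\sigma_r$, and the coopposite structure built into $K^{\ast\cop}\bowtie_\sigma H$ conspire to swap sides at several points, and ensuring that the compatibilities in ${}^{K^\ast}_{K^\ast}\M^{H^\ast}_{K^\ast}$, ${}_H\YD^K$, and the Doi-Hopf category of Theorem \ref{thm: 1.1}(2) all line up uniformly will occupy the bulk of the verification. Once this bookkeeping is carried out, the three equivalences should be essentially formal consequences of Theorem \ref{thm: 1.1} together with finite-dimensional linear duality.
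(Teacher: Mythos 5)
Your proposal is correct and matches the paper's own proof of Theorem \ref{them: cc} essentially step for step: the leftmost equivalence is the linear-duality functor with monoidal structure $M^\ast\square_K N^\ast\cong(M\otimes_{K^\ast}N)^\ast$ (Proposition \ref{prop: four* cong four}), the middle one is Theorem \ref{thm: 1.1}(2) combined with re-reading relative Doi--Hopf modules over $K^{\ast\cop}\otimes H^\ast$ as two-sided two-cosided relative Hopf modules (Lemma \ref{lem: four* cong three} and Corollary \ref{cor: te lDoi}), and the rightmost isomorphism is the Majid-style identification of quantum-double representations with relative Yetter--Drinfeld modules (Corollary \ref{cor: YD iso repqd}, which the paper factors through $\big({}_{K^\ast}\YD^{H^\ast}\big)^\rev$ rather than unpacking the bowtie relations directly, a purely presentational difference). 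The one slip is in your side-bookkeeping for the duality step --- with the pairing conventions of (\ref{eqn: mod M*}) and (\ref{eqn: lrcomod M* relation}) a left $K$-comodule structure dualizes to a \emph{left} (not right) $K^\ast$-module structure, and likewise on the right --- but this is precisely the kind of detail you flag as needing verification and does not affect the viability of the argument.
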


We should remark that if we only focus on (\ref{eqn: 1.2}) as $\k$-linear abelian equivalences, then some of them can become particular cases of other known results. They also generalize Schauenburg's characterization ${}^H\YD_H\approx{}_H^H\M{}_H^H$ of $\k$-linear abelian categories: In 1998, Beattie, D\u{a}sc\u{a}lescu, Raianu and Van Oystaeyen \cite{BDRV98} established
${}^C\YD_A\approx{}^C_H\M_A^H$ for any $H$-bimodule coalgebra $C$ and $H$-bicomodule algebra $A$. It was furthermore generalized by Schauenburg \cite{Sch99} in 1999 to an equivalence from a category denoted by ${}_R^D\M_T^H$ (with ``four distinct angles'').
Of course,
there are quasi-Hopf algebra versions of these results as well (e.g. \cite{BC03,BT06}), as well as categorical versions (e.g. \cite{BT21}).

Another notion closely related to our results is referred to as the relative center category (e.g. \cite{GNN09}). Suppose that the Hopf algebra map $\sigma_r:H\rightarrow K$ induced by the pairing $\sigma$ is surjective. Then our main results in Theorems \ref{thm: 1.1} and \ref{thm: 1.2} can be applied, in order to explain in Proposition \ref{prop:relativecenterequiv} that the relative center $\Z_{\Rep(K)}(\Rep(H))$ is tensor equivalent to the categories listed in (\ref{eqn: 1.2}). Note in the literature that there are various ``relative center'' constructions which are useful, for instance \cite{Lau20}. Nevertheless, when $\sigma_r$ is not surjective, we do not know how to give a categorical definition analogous to the relative center, containing our descriptions for $\Rep(K^{\ast\cop}\bowtie_\sigma H)$ or ${}_H\mathfrak{YD}^K$ as a natural example (Question \ref{question}).

The paper is as follows:
In Section \ref{section:2}, we recall and introduce some concepts and their properties organized, including the quantum double, (relative) Yetter-Drinfeld modules, as well as the left partially dualized quasi-Hopf algebras.
Section \ref{Section3} is devoted to realizing the quantum double $K^{\ast\cop}\bowtie_\sigma H$ as a left partial dual of $K^\op\otimes H$, and then provide the corresponding tensor equivalences according to its reconstruction, involving the observation of the relative center $\Z_{\Rep(K)}(\Rep(H))$ when $\sigma_r$ is surjective.
Finally in Section \ref{Section 4}, the structures of the tensor categories of two-sided two-cosided relative Hopf modules are considered, which are shown to be equivalent to $\Rep(K^{\ast\cop}\bowtie_\sigma H)$ and ${}_H\mathfrak{YD}^K$. We also explain why this result generalizes Schauenburg's characterization at last.

\section{Preliminaries}\label{section:2}

Throughout this paper, all vector spaces are assumed to be over a field $\k$, and the tensor product over $\k$ is denoted simply by $\otimes$.

We refer to \cite{Swe69, Mon93, Rad12} and \cite{EGNO15} for the definitions and basic properties about Hopf algebras and tensor categories respectively, and we always make the following
identifications of Hopf algebras via the canonical isomorphisms:
\begin{equation}\label{eqn:indentification}
(H^\ast)^\ast=H,\;\;\;\;(H^\op)^\ast= H^{\ast\cop},\;\;\;\;
(H^\cop)^\ast= H^{\ast\op}\;\;\;\;
\text{and}\;\;\;\;
(H\otimes K)^\ast= H^\ast\otimes K^\ast
\end{equation}
for any finite-dimensional Hopf algebras $H$ and $K$.

Moreover, for any finite-dimensional quasi-Hopf algebra $H$ (\cite{Dri89, Kas95}), we always denote the category of its (left) finite-dimensional representations by $\Rep(H)$, which is known to be canonically a finite tensor category.

\subsection{(Generalized) quantum doubles of Hopf algebras}

Some of the most important finite-dimensional Hopf algebras are Drinfeld or quantum doubles.
The Drinfeld double is constructed due to Drinfeld \cite{Dri86}. It can be
regarded as a special case of the quantum double introduced by Doi and Takeuchi \cite{DT94} (see also \cite{Maj95}), which is defined via two Hopf algebras and a skew pairing between them.

In this paper, we will recall the definition of quantum doubles with the language of Hopf pairings. Specifically, let $A$ and $H$ be two Hopf algebras. Then a \textit{Hopf pairing} (e.g. \cite{Maj90}) between $A$ and $H$ is a bilinear form $\sigma: A\otimes H\rightarrow\k$ satisfying the following conditions
$$\begin{array}{ll}
\mathrm{(i)}\;\;\;\;\sigma(aa',h)=\sum\sigma(a,h_{(1)})\sigma(a',h_{(2)}),
& \;\;\mathrm{(ii)}\;\;\;\;\sigma(a,hh')=\sum\sigma(a_{(1)},h)\sigma(a_{(2)},h'),  \\
\mathrm{(iii)}\;\;\sigma(1,h)=\varepsilon(h),
& \;\;\mathrm{(iv)}\;\;\;\sigma(a,1)=\varepsilon(a),
\end{array}$$
for all $a,a'\in A$ and $h,h'\in H$.
Note that the conditions (i)-(iv) imply that
$$
\mathrm{(v)}\;\;\;\;\;\;\;\;\sigma(a,S(h))=\sigma(S(a),h)
$$
holds for all $a\in A$ and $h\in H$ as well (see for instance \cite[Remark 2.2(ii)]{BB08}).

Now let $H$ and $K$ be finite-dimensional Hopf algebras. We will always write Sweedler notations to indicate the coproduct of elements in $H$, $K$, $H^\ast$ and $K^\ast$. Besides, the following standard notations induced by a Hopf pairing between $K^\ast$ and $H$ will be used frequently.

\begin{notation}\label{not:sigma lr}
Suppose $\sigma: K^\ast\otimes H\rightarrow\k$ is a Hopf pairing. Then there are canonical Hopf algebra maps
\begin{equation}\label{eqn: sigma lr}
\sigma_l: K^\ast\rightarrow H^\ast,\;\;k^\ast\mapsto\sigma(k^\ast, -)
\;\;\;\;\text{and}\;\;\;\;\sigma_r: H\rightarrow K,\;\;h\mapsto\sigma(-, h),
\end{equation}
satisfying $\sigma_l=\sigma_r^\ast$.
\end{notation}

For convenience in this paper, we will use the following formulation of the quantum double (of finite-dimensional Hopf algebras),
which is described with a Hopf pairing instead of a skew one.

\begin{definition}(cf. \cite[Proposition 2.2]{DT94})
Let $H$ and $K$ be finite-dimensional Hopf algebras, and let $\sigma: K^\ast\otimes H\rightarrow\k$ be a Hopf pairing. Denote by
\begin{equation}\label{eqn: sigmabar}
\overline{\sigma}\textcolor{purple}{:}=\sigma\circ(\id_{K^{\ast\cop}}\otimes S_H^{-1})
\end{equation}
the convolution inverse of $\sigma$ in $\Hom_\k(K^{\ast\cop}\otimes H,\k)$.
The quantum double $K^{\ast\cop}\bowtie_\sigma H$ is a Hopf algebra, with $K^{\ast\cop}\otimes H$ as its underlying vector space.
The multiplication is given by
\begin{equation}\label{eqn: multiplication}
(k^\ast\bowtie h)(k'^\ast\bowtie h')=\sum \sigma(k'^\ast_{(3)}, h_{(1)})k^\ast k'^\ast_{(2)}\bowtie h_{(2)}h'\overline{\sigma}(k'^\ast_{(1)}, h_{(3)})
\end{equation}
for all $k^\ast, k'^\ast \in K^{\ast\cop}$ and $h, h'\in H$, with identity element $\e\bowtie 1$;
The comultiplication is given by
\begin{equation}\label{eqn: comultiplication}
\Delta(k^\ast\bowtie h)=\sum (k^\ast_{(2)}\bowtie h_{(1)})\otimes(k^\ast_{(1)}\bowtie h_{(2)})
\end{equation}
for all $k^\ast\in K^{\ast\cop}$ and $h\in H$, with counit $1\otimes\e$.
The antipode of $K^{\ast\cop}\bowtie_\sigma H$ is given by
$$
S(k^\ast\bowtie h)=(1\bowtie S_H(h))(S_{K^\ast}^{-1}(k^\ast)\bowtie 1)
$$
for all $k^\ast\in K^{\ast\cop}$ and $h\in H$.
\end{definition}

It is clear by \cite[Remark 2.3]{DT94} that when $K=H$ and $\sigma$ is the evaluation, the quantum double $H^{\ast\cop}\bowtie_\sigma H$ is in fact the Drinfeld double $D(H)$ of $H$.
\begin{definition}(\cite{Dri86})
Let $H$ be a finite-dimensional Hopf algebra.
The Drinfeld double $D(H)=H^{\ast\cop}\bowtie H$ has $H^{\ast\cop}\otimes H$ as its underlying vector space. The multiplication is given by
$$
(f\bowtie h)(f'\bowtie h')
=\sum
\la f'_{(3)},h_{(1)}\ra ff'_{(2)}\bowtie h_{(2)}h'\la S^{-1}(f'_{(1)}),h_{(3)}\ra
$$
for all $f, f'\in H^\ast$ and $h, h'\in H$, with identity element $\varepsilon_H\bowtie1_H$. The comultiplication is given by
$$
\Delta_{D(H)}(f\bowtie h)=\sum (f_{(2)}\bowtie h_{(1)})\otimes(f_{(1)}\bowtie h_{(2)})
$$
for all $f\in H^\ast, h\in H$, with counit $1_H\otimes\varepsilon_H$
The antipode of $D(H)$ is given by
$$S(f\bowtie h)=(1\bowtie S(h))(S^{-1}(f)\bowtie h)$$
for all $f\in H^\ast$ and $h\in H$.
\end{definition}

\subsection{Relative Yetter-Drinfeld modules and some canonical equivalences}\label{subsection 2.2}

Let $H$ be a finite-dimensional Hopf algebra. It is known that there are four ``kinds'' of categories
\begin{equation}\label{eqn:4 YD}
{}_H\mathfrak{YD}^H,\;\;{}^H\mathfrak{YD}_H,\;\;{}^H_H\mathfrak{YD}
\;\;\text{and}\;\;\mathfrak{YD}{}^H_H
\end{equation}
of Yetter-Drinfeld modules over $H$ introduced in the literature, see \cite[Section 3]{RT93} for example. They consist respectively of objects which are both $H$-modules and $H$-comodules with certain compatibility conditions.

In this paper, for any Yetter-Drinfeld module $V$ over $H$, we use angle brackets to express the (left or right) $H$-coaction on $v\in V$ as follows:
\begin{equation}
v\mapsto\sum v_{\langle-1\rangle}\otimes v_{\langle0\rangle}\in H\otimes V\;\;\;\text{or}\;\;\;
v\mapsto\sum v_{\langle0\rangle}\otimes v_{\langle1\rangle}\in V\otimes H.
\end{equation}

\begin{lemma}(\cite{Maj91})\label{lem: HYDH cong D(H)}
Let $H$ be a finite-dimensional Hopf algebra. Then there is an isomorphism
\begin{equation}
{}_H\mathfrak{YD}^H\cong \Rep(D(H))
\end{equation}
of braided finite tensor categories. Specifically, for each object $V\in{}_H\mathfrak{YD}^H$, the left $D(H)$-action on $V$ is defined by
\begin{equation}
(f\bowtie h)\cdot v=\sum (h\cdot v)_{\langle0\rangle}\langle f, (h\cdot v)_{\langle1\rangle}\rangle
\end{equation}
for all $f\in H^{\ast\cop}$, $h\in H$ and $v\in V$.
\end{lemma}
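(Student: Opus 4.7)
The plan is to exhibit a functor $\Phi:{}_H\YD^H\to\Rep(D(H))$ given by the stated formula, construct a quasi-inverse $\Psi$ by restricting each $D(H)$-action along two canonical Hopf subalgebra inclusions of $D(H)$, and then verify compatibility with the tensor products and braidings. Since both sides are braided finite tensor categories over $\k$ whose objects are finite-dimensional, everything reduces to checking the structure maps.

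First I would define $\Phi(V)$ on each object of ${}_H\YD^H$ by the stated formula and check that it yields a genuine left $D(H)$-module. The unit axiom is immediate from the counit property of the right $H$-coaction. The main point is the multiplicativity
$$\big((f\bowtie h)(f'\bowtie h')\big)\cdot v=(f\bowtie h)\cdot\big((f'\bowtie h')\cdot v\big),$$
which, after substituting the multiplication rule (\ref{eqn: multiplication}) on the left and expanding two applications of the defining formula on the right, reduces exactly to the left-right Yetter-Drinfeld compatibility computing the coaction of $h\cdot v$ in terms of that of $v$ together with the adjoint action of $H$. This is the principal technical obstacle, but is routine Sweedler-index bookkeeping once both sides are paired against the appropriate coproduct components of $f'$. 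Functoriality on morphisms is automatic.

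For the quasi-inverse $\Psi$, given a $D(H)$-module $V$ I would restrict its action along the algebra embeddings $H\hookrightarrow D(H),\;h\mapsto\e\bowtie h$ and $H^{\ast\cop}\hookrightarrow D(H),\;f\mapsto f\bowtie 1$. Under finite-dimensional duality, the second produces a right $H$-comodule structure on $V$. The commutation between arbitrary elements of $H^{\ast\cop}$ and $H$ inside $D(H)$, paired against a test functional in $H^\ast$, then yields precisely the Yetter-Drinfeld compatibility, so $\Psi(V)\in{}_H\YD^H$. That $\Psi$ and $\Phi$ are mutually inverse follows from the factorization $(f\bowtie h)=(f\bowtie 1)(\e\bowtie h)$ inside $D(H)$, after evaluating $\Phi$ on each factor separately.

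Finally I would verify the braided monoidal structure. The coproduct $\Delta_{D(H)}(f\bowtie h)=\sum(f_{(2)}\bowtie h_{(1)})\otimes(f_{(1)}\bowtie h_{(2)})$ acts on $\Phi(V)\otimes\Phi(W)$ by exactly the diagonal $H$-action and codiagonal right $H$-coaction that define the tensor product in ${}_H\YD^H$, so $\Phi$ is strict monoidal. For the braiding, the canonical universal $R$-matrix $R=\sum_i(\e\bowtie e_i)\otimes(e^i\bowtie 1)$ of $D(H)$, with $\{e_i\}$ and $\{e^i\}$ dual bases of $H$ and $H^\ast$, induces on $\Phi(V)\otimes\Phi(W)$ the standard braiding $v\otimes w\mapsto\sum w_{\la 0\ra}\otimes w_{\la 1\ra}\cdot v$ of ${}_H\YD^H$. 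All told, the principal difficulty is the module-axiom verification in the second paragraph; strict monoidality and braiding compatibility thereafter are direct from the explicit formulas.
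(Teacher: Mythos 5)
The paper offers no proof of this lemma: it is quoted verbatim from Majid's paper \cite{Maj91} (see also \cite{Kas95}), so there is no in-text argument to compare yours against. Your outline is the standard proof of this classical fact and is essentially correct: the formula $(f\bowtie h)\cdot v=\sum(h\cdot v)_{\la0\ra}\la f,(h\cdot v)_{\la1\ra}\ra$ does define a $D(H)$-action precisely because the commutation relation between $\e\bowtie h$ and $f'\bowtie 1$ dictated by the multiplication rule (\ref{eqn: multiplication}) is the linear dual of the left-right Yetter--Drinfeld condition (\ref{eqn: lHmodrKcomod YD}) with $K=H$, $\sigma_r=\id$; the quasi-inverse by restriction along $h\mapsto\e\bowtie h$ and $f\mapsto f\bowtie 1$ (using that a left $H^\ast$-module on a finite-dimensional space is a right $H$-comodule) is the usual one; and the comultiplication $\Delta_{D(H)}(f\bowtie h)=\sum(f_{(2)}\bowtie h_{(1)})\otimes(f_{(1)}\bowtie h_{(2)})$ does reproduce the diagonal action and the reversed-order codiagonal coaction $w_{\la1\ra}v_{\la1\ra}$ of Lemma \ref{lem: tps of YD mod}(1), so the identity is a monoidal structure. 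The only place I would ask you to be more careful is the braiding: since this paper uses the $H^{\ast\cop}\bowtie H$ presentation with a specific (co)multiplication convention, you should actually verify that your candidate $R=\sum_i(\e\bowtie e_i)\otimes(e^i\bowtie 1)$ satisfies the quasitriangularity axioms for \emph{these} conventions (the placement and ordering of the two tensor factors of $R$ is exactly the kind of thing that flips under $\cop$), rather than importing it from a source with different conventions. That check is routine but not optional if the claim is an isomorphism of \emph{braided} tensor categories.
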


Now we consider a ``relative version'' of Yetter-Drinfeld modules over a Hopf pairing $\sigma: K^\ast\otimes H\rightarrow \k$ for later use. This can be a particular case of the notion introduced in \cite[Section 4]{HJ13}, as well as a situation of crossed $(H,H,K)$-modules defined in \cite[Section 2]{CMZ97}.

\begin{definition}\label{def: YD}
Let $H$ and $K$ be finite-dimensional Hopf algebras with Hopf pairing $\sigma: K^\ast\otimes H\rightarrow \k$.
\begin{itemize}
  \item [(1)] The category ${}_H\mathfrak{YD}^K$ consists of finite-dimensional vector spaces $V$ which are both left $H$-modules and right $K$-comodules, such that the following compatibility condition holds:
\begin{equation}\label{eqn: lHmodrKcomod YD}
\sum(h\cdot v)_{\langle0\rangle}\otimes(h\cdot v)_{\langle1\rangle}
=\sum (h_{(2)}\cdot v_{\langle0\rangle})\otimes\sigma_r(h_{(3)})
 v_{\langle1\rangle}S^{-1}(\sigma_r(h_{(1)}))
\end{equation}
for all $h\in H$ and $v\in V$.
  \item [(2)] The category ${}^K\mathfrak{YD}_H$ consists of finite-dimensional vector spaces $V$ which are both right $H$-modules and left $K$-comodules, such that the following compatibility condition holds:
\begin{equation}\label{eqn: lKcomodrHmod of YD}
\sum(v\cdot h)_{\langle-1\rangle}\otimes(v\cdot h)_{\langle0\rangle}=\sum S^{-1}(\sigma_r(h_{(3)}))v_{\langle-1\rangle}\sigma_r(h_{(1)})
\otimes (v_{\langle0\rangle}\cdot h_{(2)})
\end{equation}
for all $h\in H$ and $v\in V$.
\end{itemize}
\end{definition}

Furthermore, similarly to the fact that the categories (\ref{eqn:4 YD}) are finite tensor categories, we have the following lemma.

\begin{lemma}\label{lem: tps of YD mod}
Let $H$ and $K$ be finite-dimensional Hopf algebras with Hopf pairing $\sigma: K^\ast\otimes H\rightarrow \k$. Then ${}_H\mathfrak{YD}^K$ and ${}^K\mathfrak{YD}_H$ are both finite tensor categories. Specifically:
\begin{itemize}
\item[(1)]
For any $V, W\in{}_H\mathfrak{YD}^K$, their tensor product is defined to be $V\otimes W$ with left $H$-module structure
\begin{equation}\label{eqn: lH YD}
h\otimes(v\otimes w)\mapsto\sum (h_{(1)}\cdot v)\otimes (h_{(2)}\cdot w)
\;\;\;\;\;\;\;\;(h\in H,\;v\in V,\;w\in W)
\end{equation}
and right $K$-comodule structure
\begin{equation}\label{eqn: rK YD}
v\otimes w\mapsto\sum (v_{\langle0\rangle}\otimes w_{\langle0\rangle})\otimes w_{\langle1\rangle}v_{\langle1\rangle}\;\;\;\;\;\;\;\;(v\in V,\;w\in W).
\end{equation}

\item[(2)]
For any $V, W\in {}^K\mathfrak{YD}_H$, their tensor product is defined to be $V\otimes W$ with right $H$-module structure
$$
(v\otimes w)\otimes h\mapsto \sum (v\cdot h_{(1)})\otimes (w\cdot h_{(2)})
\;\;\;\;\;\;\;\;(h\in H,\;v\in V,\;w\in W)
$$
and left $K$-comodule structure
$$
v\otimes w\mapsto \sum w_{\langle-1\rangle}v_{\langle-1\rangle}\otimes
(v_{\langle0\rangle}\otimes w_{\langle0\rangle})
\;\;\;\;\;\;\;\;(v\in V,\;w\in W).
$$
\end{itemize}
\end{lemma}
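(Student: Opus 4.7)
The plan is to establish the finite tensor structure on ${}_H\mathfrak{YD}^K$ (and analogously on ${}^K\mathfrak{YD}_H$) by verifying in turn: compatibility of the proposed tensor-product $H$-action and $K$-coaction, the unit object, associator and unitors, rigidity, and finiteness.

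The main calculation is checking that the $H$-action (\ref{eqn: lH YD}) and $K$-coaction (\ref{eqn: rK YD}) on $V\otimes W$ satisfy the Yetter-Drinfeld condition (\ref{eqn: lHmodrKcomod YD}). Writing $(h\cdot(v\otimes w))_{\langle 0\rangle}\otimes(h\cdot(v\otimes w))_{\langle 1\rangle}$ in Sweedler notation and applying (\ref{eqn: lHmodrKcomod YD}) to $V$ and to $W$ separately produces an expression involving a six-fold expansion $h_{(1)}\otimes\cdots\otimes h_{(6)}$, whose $K$-tensorand contains the block $\sigma_r(h_{(6)})w_{\langle 1\rangle}S^{-1}(\sigma_r(h_{(4)}))\sigma_r(h_{(3)})v_{\langle 1\rangle}S^{-1}(\sigma_r(h_{(1)}))$. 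Since $\sigma_r$ is a Hopf algebra map, the inner subblock simplifies to $\sigma_r(S^{-1}(h_{(4)})h_{(3)})$.

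The key coassociative observation is that, under the grouping $\Delta^5 = (\id^{\otimes 2}\otimes\Delta\otimes\id^{\otimes 2})\circ\Delta^4$, the adjacent pair $h_{(3)}\otimes h_{(4)}$ in the six-fold is precisely the comultiplication of the middle factor of $\Delta^4(h)$. The antipode identity $\sum S^{-1}(x_{(2)})x_{(1)}=\varepsilon(x)1_H$ then collapses this block to $\varepsilon$ on that middle factor; a subsequent counit reduction on the resulting five-fold converts it back to $\Delta^3(h)$, yielding exactly the right-hand side of (\ref{eqn: lHmodrKcomod YD}) evaluated via (\ref{eqn: lH YD})--(\ref{eqn: rK YD}). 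The main obstacle is precisely this bookkeeping: identifying the correct coassociative decomposition of $\Delta^5$ so as to recognize $h_{(3)}\otimes h_{(4)}$ as a comultiplication ready for antipode cancellation, and then confirming that no other spectator factor interferes with the collapse.

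The remaining ingredients are standard. The unit object is $\k$ with trivial $H$-action (via $\varepsilon_H$) and trivial $K$-coaction ($1\mapsto 1\otimes 1_K$); the associator and unit isomorphisms are inherited from $\Vec$; for rigidity, the left dual $V^\ast$ is equipped with $(h\cdot\phi)(v)=\phi(S_H(h)\cdot v)$ together with the right $K$-coaction induced by $S_K^{-1}$ on the dual basis, and the YD compatibility on $V^\ast$, together with the fact that the canonical evaluation and coevaluation are morphisms in ${}_H\mathfrak{YD}^K$, is verified by a shorter analogous Sweedler computation. Finiteness is immediate since every object is a finite-dimensional $H$-module, so the category is a finite abelian full subcategory of $\Rep(H)$. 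Part (2) for ${}^K\mathfrak{YD}_H$ is carried out by the symmetric argument, with the antipode collapse occurring at the analogous relative positions in the six-fold expansion.
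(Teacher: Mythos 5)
The paper states this lemma without proof, treating it as a routine analogue of the classical fact that the four Yetter--Drinfeld categories over $H$ are finite tensor categories (and as a special case of the crossed modules of \cite{CMZ97}), so there is no in-paper argument to compare against. Your central computation is the right one and is correct: expanding the left-hand side of (\ref{eqn: lHmodrKcomod YD}) for $V\otimes W$ via (\ref{eqn: lH YD})--(\ref{eqn: rK YD}) does produce the six-fold Sweedler expansion with the $K$-tensorand $\sigma_r(h_{(6)})w_{\la1\ra}S^{-1}(\sigma_r(h_{(4)}))\sigma_r(h_{(3)})v_{\la1\ra}S^{-1}(\sigma_r(h_{(1)}))$; since $\sigma_r$ is a Hopf algebra map the inner block is $\sigma_r(S^{-1}(h_{(4)})h_{(3)})$, and your identification of $h_{(3)}\otimes h_{(4)}$ as $\Delta$ of the middle factor of $\Delta^4(h)$ followed by $\sum S^{-1}(x_{(2)})x_{(1)}=\varepsilon(x)1$ collapses it exactly to the right-hand side. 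The unit object, associators, and the symmetric argument for ${}^K\YD_H$ are fine.

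One step as written is wrong, though the conclusion it is meant to support is true: ${}_H\YD^K$ is \emph{not} a full subcategory of $\Rep(H)$ --- a morphism must be both $H$-linear and $K$-colinear, so the forgetful functor to $\Rep(H)$ is faithful but not full, and finiteness cannot be read off that way. The clean fix is to identify ${}_H\YD^K$ as a $\k$-linear abelian category with a category of Doi--Hopf modules, hence with $\Rep$ of a finite-dimensional smash product algebra (this is exactly the paper's Lemma \ref{lem: Rep cong}, and is made explicit later in Proposition \ref{prop: *YD* cong rep} and Corollary \ref{cor: YD iso repqd}); finiteness in the sense of \cite{EGNO15} then follows. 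Separately, your rigidity sketch only addresses left duals; for a finite tensor category you also need right duals, which here come from $S^{\pm1}$ being bijective, and the dual-object coaction should be written out (the standard $\sum f_{\la0\ra}(v)f_{\la1\ra}=\sum f(v_{\la0\ra})S^{-1}(v_{\la1\ra})$ pattern) together with the check that evaluation and coevaluation are $K$-colinear, since that is where the relative ($\sigma_r$-twisted) condition could in principle fail. These are repairable gaps rather than errors in the main verification.
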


Under the assumptions of Lemma \ref{lem: tps of YD mod}, there is another category ${}_{K^\ast}\YD^{H^\ast}$ of (left-right) Yetter-Drinfeld modules in the sense of Definition \ref{def: YD}(1) with Hopf pairing
$$\sigma':H\otimes K^\ast\rightarrow\k,\;\;\;\;h\otimes k^\ast\mapsto\sigma(k^\ast,h),$$
where $\sigma'_l=\sigma_r$ and $\sigma'_r=\sigma_l$ hold in this situation.

In this paper, we will concentrate on $\big({}_{K^\ast}\mathfrak{YD}^{H^\ast}\big)^\rev$, which denotes the finite tensor category with reverse tensor products to ${}_{K^\ast}\mathfrak{YD}^{H^\ast}$.
One can find that
for any objects $V,W\in\big({}_{K^\ast}\mathfrak{YD}^{H^\ast}\big)^\rev$, their tensor product $V\otimes W$ will have the left $K^\ast$-module structure
\begin{equation}\label{eqn: lK* *YD*}
k^\ast\otimes(v\otimes w)\mapsto\sum (k^\ast_{(2)}\cdot v)\otimes (k^\ast_{(1)}\cdot w)
\;\;\;\;\;\;\;\;(k^\ast\in K^\ast,\;v\in V,\;w\in W)
\end{equation}
and right $H^\ast$-comodule structure
\begin{equation}\label{eqn: rH* *YD*}
v\otimes w\mapsto\sum (v_{\langle0\rangle}\otimes w_{\langle0\rangle})\otimes v_{\langle1\rangle}w_{\langle1\rangle}\;\;\;\;\;\;\;\;(v\in V,\;w\in W).
\end{equation}
In fact, this tensor category is indeed isomorphic to ${}_H\mathfrak{YD}^K$.

\begin{proposition}\label{prop: YD cong *YD*}
Let $H$ and $K$ be finite-dimensional Hopf algebras with Hopf pairing $\sigma: K^\ast\otimes H\rightarrow\k$. Then there is an isomorphism of tensor categories
\begin{equation}\label{eqn: YD cong *YD*}
{}_H\mathfrak{YD}^K\cong \big({}_{K^\ast}\mathfrak{YD}^{H^\ast}\big)^\rev,
\end{equation}
which sends $V\in{}_H\mathfrak{YD}^K$ to the vector space $V$ with left $K^\ast$-module structure $\rightharpoonup$ defined by
\begin{equation}\label{lK*rH* YD}
k^\ast\rightharpoonup v=\sum v_{\la0\ra}\la k^\ast,v_{\la1\ra}\ra
\;\;\;\;(\forall k^\ast\in K^\ast,\;\forall v\in V),
\end{equation}
as well as the right $H^\ast$-comodule structure
\begin{equation}\label{eqn: rH*}
v\mapsto\sum v^{\la0\ra}\otimes v^{\la1\ra}
\;\;\;\;\text{such that}\;\;\;\;
\sum v^{\la0\ra}\la v^{\la1\ra},h\ra=h\cdot v\;\;\;\;(\forall h\in H).
\end{equation}
\end{proposition}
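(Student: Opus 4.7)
The plan is to construct the functor $F:{}_H\YD^K\To\big({}_{K^\ast}\YD^{H^\ast}\big)^{\rev}$ that fixes the underlying vector space and transports structures via the standard finite-dimensional module-comodule duality. The right $K$-coaction on $V$ is dualized to the left $K^\ast$-action $\rightharpoonup$ of (\ref{lK*rH* YD}), and the left $H$-action is dualized to the right $H^\ast$-coaction characterized by (\ref{eqn: rH*}). Because each of these dualizations is a bijection for finite-dimensional $V$, the putative $F$ is automatically bijective on both objects and on morphism spaces (a linear map is $H$-linear if and only if it is $H^\ast$-colinear under the correspondence, and similarly for $K$ and $K^\ast$), and an inverse functor is defined by the same recipe in reverse. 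Hence only two items remain to check: the Yetter-Drinfeld compatibility of the transported structures, and the compatibility with tensor products (together with the $\rev$ appearing on the target).

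For the Yetter-Drinfeld compatibility, I would verify the condition in Definition \ref{def: YD}(1) applied to ${}_{K^\ast}\YD^{H^\ast}$ with the transposed pairing $\sigma':H\otimes K^\ast\To\k$, $h\otimes k^\ast\mapsto\sigma(k^\ast,h)$, for which $\sigma'_r=\sigma_l$. Pairing the required identity in $V\otimes H^\ast$ against an arbitrary $h\in H$ in the $H^\ast$-slot, and then invoking the defining property (\ref{eqn: rH*}), the identification $\la\sigma_l(k^\ast),h\ra=\sigma(k^\ast,h)$ from Notation \ref{not:sigma lr}, and the dual-antipode rule $\la S_{H^\ast}^{-1}(f),h\ra=\la f,S_H^{-1}(h)\ra$, reduces it to the scalar identity
$$
h\cdot(k^\ast\rightharpoonup v)\;=\;\sum\sigma(k^\ast_{(3)},h_{(1)})\,\sigma(k^\ast_{(1)},S^{-1}(h_{(3)}))\,\big(k^\ast_{(2)}\rightharpoonup(h_{(2)}\cdot v)\big).
$$
This identity in turn follows by expanding $k^\ast_{(2)}\rightharpoonup(h_{(2)}\cdot v)$ through (\ref{lK*rH* YD}), applying (\ref{eqn: lHmodrKcomod YD}) to the right $K$-coaction of $h_{(2)}\cdot v$, and collapsing the resulting products of $\sigma$-factors by the Hopf pairing axioms (i)-(v) together with the antipode identities on $H$ and $K^\ast$.

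For the tensor compatibility, the computations are direct. The left $H$-action (\ref{eqn: lH YD}) on $V\otimes W$ dualizes under (\ref{eqn: rH*}) to the right $H^\ast$-coaction $v\otimes w\mapsto\sum(v^{\la0\ra}\otimes w^{\la0\ra})\otimes v^{\la1\ra}w^{\la1\ra}$, which is exactly (\ref{eqn: rH* *YD*}), using only that multiplication on $H^\ast$ is dual to comultiplication on $H$. Dually, the right $K$-coaction (\ref{eqn: rK YD}), whose characteristic factor-swap $w_{\la1\ra}v_{\la1\ra}$ reverses the order of the two $K$-degrees, dualizes under (\ref{lK*rH* YD}) to $k^\ast\rightharpoonup(v\otimes w)=\sum(k^\ast_{(2)}\rightharpoonup v)\otimes(k^\ast_{(1)}\rightharpoonup w)$, matching (\ref{eqn: lK* *YD*}). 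The resulting interchange of the subscripts $(1)$ and $(2)$ in the coproduct of $k^\ast$ is precisely what the superscript $\rev$ on the target category encodes, which is why the reversal was necessary in the statement.

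The main obstacle will be the Yetter-Drinfeld verification: it interweaves two antipodes (one on $H^\ast$ and one on $K$ via $\sigma_r$), both avatars $\sigma_l$ and $\sigma_r$ of the pairing, and three-fold coproducts on each of $h$ and $k^\ast$. Reducing the identity in $V\otimes H^\ast$ to the displayed scalar identity by testing against $h$ is the key simplification; once there, the Hopf pairing axioms together with the antipode identities close the calculation. The tensor compatibility and the construction of the inverse functor are then routine, completing the proof.
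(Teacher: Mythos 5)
Your proposal is correct and follows essentially the same route as the paper's proof: dualize the two structures, verify the Yetter--Drinfeld compatibility in $\big({}_{K^\ast}\mathfrak{YD}^{H^\ast}\big)^\rev$ by pairing the identity in $V\otimes H^\ast$ against an arbitrary $h\in H$ and collapsing the $\sigma$-factors via (\ref{eqn: lHmodrKcomod YD}) and the pairing/antipode axioms, then observe that the dualized tensor structures match (\ref{eqn: lK* *YD*}) and (\ref{eqn: rH* *YD*}) with the reversal accounting for the index swap. The scalar identity you isolate is exactly the one the paper computes, so no gap remains.
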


\begin{proof}
At first we should verity that (\ref{lK*rH* YD}) and (\ref{eqn: rH*}) satisfy the compatibility condition for $V$ to be an object in ${}_{K^\ast}\mathfrak{YD}^{H^\ast}$: In order to show that
\begin{eqnarray*}
\sum(k^\ast\rightharpoonup v)^{\la0\ra}\otimes (k^\ast\rightharpoonup v)^{\la1\ra}
&=&
\sum(k^\ast_{(2)}\rightharpoonup v^{\la0\ra})\otimes\sigma'_r(k^\ast_{(3)})v^{\la1\ra}
S^{-1}(\sigma'_r(k^\ast_{(1)}))  \\
&=&
\sum(k^\ast_{(2)}\rightharpoonup v^{\la0\ra})\otimes\sigma_l(k^\ast_{(3)})v^{\la1\ra}
S^{-1}(\sigma_l(k^\ast_{(1)}))
\end{eqnarray*}
holds for any $k^\ast\in K^\ast$ and $v\in V$, we compare the images of the left and right sides under any $\id_V\otimes h$ $(h\in H)$ by following calculations:
\begin{eqnarray*}
&&\sum(k^\ast_{(2)}\rightharpoonup v^{\la0\ra})\big\la \sigma_l(k^\ast_{(3)})v^{\la1\ra}
S^{-1}(\sigma_l(k^\ast_{(1)})),h\big\ra \\
&=&
\sum(k^\ast_{(2)}\rightharpoonup v^{\la0\ra})\la k^\ast_{(3)},\sigma_r(h_{(1)})\ra\la v^{\la1\ra},h_{(2)}\ra\big\la k^\ast_{(1)},\sigma_r(S^{-1}(h_{(3)}))\big\ra   \\
&\overset{(\ref{eqn: rH*})}=&
\sum k^\ast_{(2)}\rightharpoonup (h_{(2)}\cdot v)\la
k^\ast_{(3)},\sigma_r(h_{(1)})\ra\big\la k^\ast_{(1)},\sigma_r(S^{-1}(h_{(3)}))\big\ra\\
&\overset{(\ref{lK*rH* YD})}=&
\sum (h_{(2)}\cdot v)_{\la0\ra}
\big\la k^\ast_{(2)},(h_{(2)}\cdot v)_{\la1\ra}\big\ra
\la k^\ast_{(3)},\sigma_r(h_{(1)})\ra\big\la k^\ast_{(1)},\sigma_r(S^{-1}(h_{(3)}))\big\ra \\
&\overset{(\ref{eqn: lHmodrKcomod YD})}=&
\sum h_{(3)}\cdot v_{\la0\ra} \big\la k^\ast_{(2)},\sigma_r(h_{(4)})v_{\la1\ra}
S^{-1}(\sigma_r(h_{(2)}))\big\ra\la k^\ast_{(3)},\sigma_r(h_{(1)})\ra\big\la k^\ast_{(1)},\sigma_r(S^{-1}(h_{(5)}))\big\ra   \\
&=&
\sum h_{(3)}\cdot v_{\la0\ra}
\big\la k^\ast,\sigma_r(S^{-1}(h_{(5)}))\sigma_r(h_{(4)})v_{\la1\ra}
S^{-1}(\sigma_r(h_{(2)}))\sigma_r(h_{(1)})\big\ra \\
&=&
\sum h\cdot v_{\la0\ra}\la k^\ast,v_{\la1\ra}\ra
\overset{(\ref{eqn: rH*})}=
\sum {v_{\la0\ra}}^{\la0\ra}\la {v_{\la0\ra}}^{\la1\ra},h\ra\la k^\ast,v_{\la1\ra}\ra \\
&\overset{(\ref{lK*rH* YD})}=&
\sum(k^\ast\rightharpoonup v)^{\la0\ra}\big\la (k^\ast\rightharpoonup v)^{\la1\ra},h\big\ra.
\end{eqnarray*}

Besides, under the functor (\ref{eqn: YD cong *YD*}), the left $H$-action (\ref{eqn: lH YD}) on every tensor product object $V\otimes W$ will induce the right $H^\ast$-coaction (\ref{eqn: rH* *YD*}), and the right $K$-coaction (\ref{eqn: rK YD}) on every tensor product object $V\otimes W$ will induce the left $K^\ast$-action (\ref{eqn: lK* *YD*}).
Consequently, it follows immediately that (\ref{eqn: YD cong *YD*}) is a tensor isomorphism.
\end{proof}

\subsection{Partially admissible mapping systems and left partial dualizations}

Suppose $H$ is a finite-dimensional Hopf algebra, and $B$ is a left $H$-comodule algebra embedded into $H$. It is introduced in \cite{Li23} the notion of \textit{left partially dualized quasi-Hopf algebra} of $H$, which reconstructs the dual tensor category of $\Rep(H)$ respective to its left module category $\Rep(B)$ in fact (\cite[Section 4]{Li23}).

Let's recall the notion of \textit{partially admissible mapping system} which was introduced in \cite{Li23}.

\begin{definition}(\cite[Definition 2.6]{Li23})\label{def:PAMS}
Let $H$ be a finite-dimensional Hopf algebra. Suppose that
\begin{itemize}
\item[(1)]
$\iota:B\rightarrowtail H$ is an injection of left $H$-comodule algebras, and
$\pi:H\twoheadrightarrow C$ is a surjection of right $H$-module coalgebras;
\item[(2)]
The image of $\iota$ equals to the space of the coinvariants of the right $C$-comodule $H$ with structure $(\mathrm{id}_H\otimes\pi)\circ\Delta$.
\end{itemize}
Then the pair of $\k$-linear diagrams
\begin{equation*}
\begin{array}{ccc}
\xymatrix{
B \ar@<.5ex>[r]^{\iota} & H \ar@<.5ex>@{-->}[l]^{\zeta} \ar@<.5ex>[r]^{\pi}
& C \ar@<.5ex>@{-->}[l]^{\gamma}  }
&
\;\;\text{and}\;\;
&
\xymatrix{
C^\ast \ar@<.5ex>[r]^{\pi^\ast}
& H^\ast \ar@<.5ex>@{-->}[l]^{\gamma^\ast} \ar@<.5ex>[r]^{\iota^\ast}
& B^\ast \ar@<.5ex>@{-->}[l]^{\zeta^\ast}  },
\end{array}
\end{equation*}
is said to be a {\pams} for $\iota$, denoted by $(\zeta,\gamma^\ast)$ for simplicity, if all the conditions
\begin{itemize}
\item[(3)]
$\zeta$ and $\gamma$ have convolution inverses $\overline{\zeta}$ and $\overline{\gamma}$ respectively;
\item[(4)]
$\zeta$ preserves left $B$-actions, and $\gamma$ preserves right $C$-coactions;
\item[(5)]
$\zeta$ and $\gamma$ preserve both the units and counits, meaning that
$$\zeta(1_H)=1_B,\;\;\;\;\e_B\circ\zeta=\e_H,\;\;\;\;
\gamma(1_C)=1_H\;\;\;\;\text{and}\;\;\;\;\e_H\circ\gamma=\e_C,$$
where we make convention $1_C:=\pi(1_H)$ and $\e_B:=\iota^\ast(\e_H)$;
\item[(6)]
$(\iota\circ\zeta)\ast(\gamma\circ\pi)=\id_H$,
\end{itemize}
and the dual forms of (1) to (6) hold equivalently.
\end{definition}

Some elementary properties of partially admissible mapping systems should be mentioned for later uses.

\begin{lemma}(\cite[Proposition 2.9 (1) and (2)]{Li23})\label{lem:PAMS properties}
Suppose that $(\zeta, \gamma^\ast)$ is a partially admissible mapping system for $\iota: B\rightarrowtail H$. Then:
\begin{itemize}
\item[(1)]
$\iota\circ\zeta=\id_B$ and $\pi\circ\gamma=\id_C$ and their the dual forms hold;
\item[(2)]
$\zeta\circ\gamma=\langle\e_C, -\rangle1_B$
as linear maps from $C$ to $B$, where the notation $\langle\e_C, -\rangle1_B$ denotes the product of the evaluation morphism $\langle\e_C, -\rangle$ and the unit element $1_B$.
\end{itemize}
\end{lemma}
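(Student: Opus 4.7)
The plan is to deduce both assertions from the single convolution identity $(\iota\circ\zeta)\ast(\gamma\circ\pi)=\id_H$ of axiom~(6) in Definition~\ref{def:PAMS}, together with the coinvariant characterization in axiom~(2), the unit/counit normalizations in axiom~(5), and the convolution-invertibility of $\gamma$ from axiom~(3). The common idea is to probe the identity either at an element of $\iota(B)$ or at an element of $\gamma(C)$, and in each case to arrange for one of the two convolution factors to collapse.

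For part~(1), to establish $\zeta\circ\iota=\id_B$, I would evaluate axiom~(6) on $\iota(b)$ for $b\in B$. Since $\iota(b)$ lies in the right $C$-coinvariants, the coaction reduces to $\sum\iota(b)_{(1)}\otimes\pi(\iota(b)_{(2)})=\iota(b)\otimes 1_C$, and then $\gamma(1_C)=1_H$ collapses the convolution to $\iota(\zeta(\iota(b)))\cdot 1_H=\iota(b)$; injectivity of $\iota$ concludes. For the companion identity $\pi\circ\gamma=\id_C$, I would apply $\pi\otimes\id_C$ to the right $C$-colinearity of $\gamma$: since $\pi$ is a coalgebra map, this yields $\Delta_C\circ\pi\circ\gamma=(\pi\gamma\otimes\id_C)\circ\Delta_C$, and applying $\e_C\otimes\id_C$ together with $\e_C\circ\pi=\e_H$ and $\e_H\circ\gamma=\e_C$ collapses the right-hand side to $c$, proving $\pi\gamma(c)=c$. (Alternatively, the dual form of axiom~(6), namely $(\pi^\ast\circ\gamma^\ast)\ast(\zeta^\ast\circ\iota^\ast)=\id_{H^\ast}$, yields the same conclusion by the mirror of the first argument.)

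For part~(2), the idea is to turn axiom~(6) into a convolution equation in $\Hom(C,H)$. Evaluating it at $\gamma(c)$ and invoking the right $C$-colinearity of $\gamma$, the substitution $\sum\gamma(c)_{(1)}\otimes\pi(\gamma(c)_{(2)})=\sum\gamma(c_{(1)})\otimes c_{(2)}$ rewrites the identity as $\sum\iota(\zeta(\gamma(c_{(1)})))\cdot\gamma(c_{(2)})=\gamma(c)$, that is $(\iota\circ\zeta\circ\gamma)\ast\gamma=\gamma$ in the convolution algebra $\Hom(C,H)$. Convolving on the right with $\overline{\gamma}$ then pins down $\iota\circ\zeta\circ\gamma=\langle\e_C,-\rangle 1_H$, and since $\iota$ is an injective algebra map with $\iota(1_B)=1_H$, the desired equality $\zeta\circ\gamma=\langle\e_C,-\rangle 1_B$ follows.

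I do not expect a genuine obstacle; the argument is essentially bookkeeping. The one point requiring care is spotting where to exploit each normalization: coinvariance of $\iota(b)$ plus $\gamma(1_C)=1_H$ for the first half of (1), the coalgebra structure of $C$ together with $\e_H\circ\gamma=\e_C$ for the second half, and the convolution invertibility of $\gamma$ for (2).
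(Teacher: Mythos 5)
Your argument is correct and complete. The paper itself gives no proof of this lemma --- it is quoted verbatim from \cite[Proposition 2.9]{Li23} --- so there is nothing to compare against, but each of your three steps checks out: probing axiom (6) at a coinvariant $\iota(b)$ and using $\gamma(1_C)=1_H$ gives $\zeta\circ\iota=\id_B$ (you rightly read the statement's ``$\iota\circ\zeta=\id_B$'' as a typo for $\zeta\circ\iota$, since only the latter has the correct source and target); the right $C$-colinearity of $\gamma$ together with $\e_C\circ\pi=\e_H$ and $\e_H\circ\gamma=\e_C$ gives $\pi\circ\gamma=\id_C$; and rewriting axiom (6) on $\gamma(C)$ as the convolution identity $(\iota\circ\zeta\circ\gamma)\ast\gamma=\gamma$ in $\Hom(C,H)$ and cancelling $\gamma$ by its convolution inverse $\overline{\gamma}$ gives part (2), with injectivity and unitality of $\iota$ pulling the answer back to $B$.
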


Evidently, the right $H$-module coalgebra surjection $\pi:H\twoheadrightarrow C$ induces  to the injection $\pi^\ast:C^\ast\rightarrowtail H^\ast$ of right $H^\ast$-comodule algebras.
We will use notations similar with \cite[Section 2.1]{Li23}
that
$$\begin{array}{ccc}
{\begin{array}{ccc}
B &\rightarrow& H\otimes B \\
b &\mapsto& \sum b_{(1)}\otimes b_{(2)}
\end{array}}
&\;\;\; \text{and}\;\;\;&
{\begin{array}{ccc}
C^\ast &\rightarrow& C^\ast\otimes H^\ast \\
x^\ast &\mapsto& \sum x^\ast_{(1)}\otimes x^\ast_{(2)}
\end{array}}
\end{array}$$
to represent the structures of the left $H$-comodule $B$ and the right $H^\ast$-comodule $C^\ast$ respectively.
Furthermore, we denote
\begin{equation}\label{hit action}
b\leftharpoonup h^\ast:= \sum \langle h^\ast, b_{(1)}\rangle b_{(2)}
\;\;\;\;\;\;\text{and}\;\;\;\;\;\;
h\rightharpoonup x^\ast:= \sum x^\ast_{(1)}\langle x^\ast_{(2)}, h\rangle
\end{equation}
for any $h^\ast\in H^\ast$, $b\in B$ and $h\in H$, $x^\ast\in C^\ast$. It is clear that $(B,\leftharpoonup)$ is a right $H^\ast$-module and $(C^\ast,\rightharpoonup)$ is a left $H$-module.
However, the left and right hit actions of $H^\ast$ on $H$ (or vice versa) are also denoted by $\rightharpoonup$ and $\leftharpoonup$ without confusions.

We remark that
the partially admissible mapping system is not unique for a left coideal subalgebra $\iota: B\hookrightarrow H$, but each one would determine a left partially dualized quasi-Hopf algebra.

\begin{definition}(\cite[Section 3]{Li23})\label{def: padq-Ha}
Let $H$ be a finite-dimensional Hopf algebra. Suppose that $(\zeta, \gamma^\ast)$ is a partially admissible mapping system:
\begin{equation*}
\begin{array}{ccc}
\xymatrix{
B \ar@<.5ex>[r]^{\iota} & H \ar@<.5ex>@{-->}[l]^{\zeta} \ar@<.5ex>[r]^{\pi}
& C \ar@<.5ex>@{-->}[l]^{\gamma}  }
&
\;\;\text{and}\;\;
&
\xymatrix{
C^\ast \ar@<.5ex>[r]^{\pi^\ast}
& H^\ast \ar@<.5ex>@{-->}[l]^{\gamma^\ast} \ar@<.5ex>[r]^{\iota^\ast}
& B^\ast \ar@<.5ex>@{-->}[l]^{\zeta^\ast}  }.
\end{array}
\end{equation*}
Then the left partially dualized quasi-Hopf algebra \textit{(or left partial dual)} $C^\ast\# B$ determined by $(\zeta, \gamma^\ast)$ is defined with the following structures:
\begin{itemize}
\item[(1)] As an algebra, $C^\ast\# B$ is the smash product algebra with underlying vector space $C^\ast\otimes B$: The multiplication is given by
    \begin{equation}\label{eqn:smashprod1}
     (x^\ast\#b)(y^\ast\#c):=\sum x^\ast(b_{(1)}\rightharpoonup y^\ast)\#b_{(2)}c\;\;\;\;\;\;(\forall x^\ast, y^\ast\in C^\ast,\;\; \forall b,c\in B),
    \end{equation}
    and the unit element is $\e\#1$;
\item[(2)]
The ``comultiplication'' $\pd{\Delta}:C^\ast\# B\rightarrow(C^\ast\# B)^{\otimes 2}$ is given by:
\begin{equation}\label{eqn:Delta(x*)}
\mathbf{\Delta}(x^\ast\#1)
=\sum_i\left(x^\ast_{(1)}\#\zeta[\gamma(x_i)\leftharpoonup x^\ast_{(2)}]\right)
  \otimes(x_i^\ast\#1)
\;\;\;\;\;\;(\forall x^\ast\in C^\ast),
\end{equation}
\begin{equation}\label{eqn:Delta(b)}
\pd{\Delta}(\e\#b)
=\sum_{i}\left(\e\#\zeta[\gamma(x_i)b_{(1)}]\right)
  \otimes\left(x_i^\ast\#b_{(2)}\right)\;\;\;\;\;\;(\forall b\in B)
\end{equation}
and $\pd{\Delta}(x^\ast\#b)=\pd{\Delta}(x^\ast\#1)\pd{\Delta}(\e^\ast\#b)$,
where $\{x_i\}$ is a linear basis of $C$ with dual basis $\{x_i^\ast\}$ of $C^\ast$.
The ``counit'' $\pd{\e}$
is given by
\begin{equation}\label{eqn:epsilon}
\pd{\e}(x^\ast\#b)=\la x^\ast,1_C\ra\la\e_B,b\ra
\;\;\;\;\;\;(\forall x^\ast\in C^\ast,\;\;\forall b\in B).
\end{equation}

\item[(3)] The associator $\pd{\phi}$ is the inverse of the element
\begin{equation}\label{eqn:phi^-1}
\pd{\phi}^{-1}=\sum_{i,j}\left(\e\#\zeta[\gamma(x_i)
  \gamma(x_j)_{(1)}]\right)
  \otimes\left(x_i^\ast\#\zeta[\gamma(x_j)_{(2)}]\right)
  \otimes\left(x_j^\ast\#1\right)
\end{equation}
    where $\{x_i\}$ is a linear basis of $C$ with dual basis $\{x_i^\ast\}$ of $C^\ast$;
\item[(4)] The antipodes are described in \cite[Theorem 3.1(4)]{Li23}.
\end{itemize}
\end{definition}

\begin{remark}
For the convenience in the subsequent proofs,
here the operations (\ref{eqn:Delta(x*)}) and (\ref{eqn:epsilon})
in the definition above are replaced by the equivalent formulas in \cite[Remark 3.4]{Li23}.
\end{remark}

It is known that the quasi-Hopf algebra $C^\ast\# B$ would become a Hopf algebra
when its associator $\pd{\phi}$ (or its inverse $\pd{\phi}^{-1}$) is trivial. In this case, we also say that $C^\ast\# B$ is a \textit{left partially dualized Hopf algebra} of $H$.
The following lemma states a sufficient condition for this situation, and some others can be found in \cite[Section 6]{Li23}.

\begin{lemma}\label{lem: lpd is Hopf alg}
Let $H$, $B$ and $C$ be finite-dimensional Hopf algebras. Suppose the algebra $B$ is a left $H$-comodule algebra, and the coalgebra $C$ is a right $H$-module coalgebra, satisfying that
\begin{equation*}
\begin{array}{ccc}
\xymatrix{
B \ar@<.5ex>[r]^{\iota} & H \ar@<.5ex>@{-->}[l]^{\zeta} \ar@<.5ex>[r]^{\pi}
& C \ar@<.5ex>@{-->}[l]^{\gamma}  }
&
\;\;\text{and}\;\;
&
\xymatrix{
C^\ast \ar@<.5ex>[r]^{\pi^\ast}
& H^\ast \ar@<.5ex>@{-->}[l]^{\gamma^\ast} \ar@<.5ex>[r]^{\iota^\ast}
& B^\ast \ar@<.5ex>@{-->}[l]^{\zeta^\ast}  },
\end{array}
\end{equation*}
is a partially admissible mapping system for $\iota$.
If $\zeta$ and $\gamma$ are Hopf algebra maps, then the left partial dual $C^\ast\#B$ determined by $(\zeta,\gamma^\ast)$ is a Hopf algebra, and its coalgebra structure is the tensor product $C^\ast\otimes B$.
\end{lemma}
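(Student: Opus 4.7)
The plan is to prove both conclusions of the lemma by exploiting Lemma \ref{lem:PAMS properties} in combination with the Hopf algebra map hypotheses on $\zeta$ and $\gamma$. First I will establish the triviality of the associator $\pd{\phi}$, and then compute the comultiplication and counit to identify them with the tensor product coalgebra structure of $C^\ast\otimes B$.

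For the triviality of $\pd{\phi}$, I will expand the expression $(\ref{eqn:phi^-1})$ for $\pd{\phi}^{-1}$. Because $\gamma$ is a Hopf algebra map, we have $\Delta_H(\gamma(x_j))=\sum\gamma(x_{j(1)})\otimes\gamma(x_{j(2)})$ and $\gamma(x_i)\gamma(x_{j(1)})=\gamma(x_ix_{j(1)})$, so both inputs to $\zeta$ lie in the image of $\gamma$. Lemma \ref{lem:PAMS properties}(2) then reduces them to scalar multiples of $1_B$, namely $\zeta[\gamma(x_i)\gamma(x_j)_{(1)}]=\e_C(x_i)\e_C(x_{j(1)})1_B$ and $\zeta[\gamma(x_j)_{(2)}]=\e_C(x_{j(2)})1_B$, using that $\e_C$ is an algebra map. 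The counit axiom $\sum\e_C(x_{j(1)})\e_C(x_{j(2)})=\e_C(x_j)$ together with the dual-basis identity $\sum_i\e_C(x_i)x_i^\ast=\e$ then collapse the double sum to $\pd{\phi}^{-1}=(\e\#1)\otimes(\e\#1)\otimes(\e\#1)$, so $\pd{\phi}$ is trivial and $C^\ast\#B$ is a Hopf algebra.

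For the coalgebra structure, the counit from $(\ref{eqn:epsilon})$ is visibly the tensor product counit $\e_{C^\ast}\otimes\e_B$, so only the comultiplication requires work. Since $\pd{\Delta}$ is an algebra map and $(x^\ast\#1)(\e\#b)=x^\ast\#b$, it suffices to compute $\pd{\Delta}(x^\ast\#1)$ and $\pd{\Delta}(\e\#b)$ separately. For the first, the identity $\pi\circ\gamma=\id_C$ and the coalgebra property of $\gamma$ give $\gamma(x_i)\leftharpoonup x^\ast_{(2)}=\sum\la x^\ast_{(2)},x_{i(1)}\ra\gamma(x_{i(2)})$; applying $\zeta$ together with Lemma \ref{lem:PAMS properties}(2) reduces this to $\la x^\ast_{(2)},x_i\ra 1_B$, and collapsing the dual-basis sum yields $\pd{\Delta}(x^\ast\#1)=\sum(x^\ast_{(1)}\#1)\otimes(x^\ast_{(2)}\#1)$. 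For the second, expanding $(\ref{eqn:Delta(b)})$ with $\zeta$ an algebra map produces $\pd{\Delta}(\e\#b)=\sum(\e\#\zeta(b_{(1)}))\otimes(\e\#b_{(2)})$, where $\sum b_{(1)}\otimes b_{(2)}\in H\otimes B$ denotes the left $H$-coaction $\dl$ on $B$. The key identity $(\zeta\otimes\id_B)\circ\dl=\Delta_B$ holds by applying $\zeta\otimes\zeta$ to the comodule condition $(\id\otimes\iota)\circ\dl=\Delta_H\circ\iota$ and using that $\zeta$ is a coalgebra map with $\zeta\circ\iota=\id_B$ by Lemma \ref{lem:PAMS properties}(1). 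Hence $\pd{\Delta}(\e\#b)=\sum(\e\#b^B_{(1)})\otimes(\e\#b^B_{(2)})$, and multiplying the two expressions produces the tensor product coproduct on $C^\ast\#B$.

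The main obstacle is the identity $(\zeta\otimes\id_B)\circ\dl=\Delta_B$ used in computing $\pd{\Delta}(\e\#b)$: this is the step where the Hopf algebra (rather than merely algebra or $B$-module) hypothesis on $\zeta$ is genuinely needed, converting the external $H$-coaction datum of the PAMS into the intrinsic coproduct of the Hopf algebra $B$ to close the argument.
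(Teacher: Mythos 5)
Your proof is correct and follows essentially the same route as the paper: triviality of the associator via Lemma \ref{lem:PAMS properties}(2), then identification of $\pd{\Delta}$ with the tensor-product coproduct using that same lemma together with the identity $\sum\zeta(b_{(1)})\otimes b_{(2)}=\Delta_B(b)$, proved exactly as in the paper from the comodule-map property of $\iota$ and $\zeta\circ\iota=\id_B$. The only cosmetic slip is on the $C^\ast$ side, where the pairing $\la x^\ast_{(2)},x_i\ra$ should read $\la\gamma^\ast(x^\ast_{(2)}),x_i\ra$ (since $x^\ast_{(2)}\in H^\ast$) and the resulting expression $\sum x^\ast_{(1)}\otimes\gamma^\ast(x^\ast_{(2)})$ still needs to be recognized as $\Delta_{C^\ast}(x^\ast)$ via the dual of your key identity for $B$ --- a step the paper records explicitly but that is symmetric to the one you already carried out.
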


\begin{proof}
Suppose $\{x_i\}$ is a linear basis of $C$ with dual basis $\{x_i^\ast\}$ of $C^\ast$ as usual.

In order to show that $C^\ast\#B$ is a left partially dualized Hopf algebra, it suffices to verify that the inverse $\pd{\phi}^{-1}$ of its associator is trivial. In fact,
since $\gamma$ is a bialgebra map, we might compute that
\begin{eqnarray*}
\pd{\phi}^{-1}
&\overset{(\ref{eqn:phi^-1})}{=}&
\sum_{i,j}\left(\e\#\zeta[\gamma(x_i)
\gamma(x_j)_{(1)}]\right)
\otimes\left(x_i^\ast\#\zeta[\gamma(x_j)_{(2)}]\right)
\otimes\left(x_j^\ast\#1\right) \\
&=&
\sum_{i,j}\left(\e\#\zeta[\gamma(x_i)
\gamma(x_j{}_{(1)})]\right)
\otimes\left(x_i^\ast\#\zeta[\gamma(x_j{}_{(2)})]\right)
\otimes\left(x_j^\ast\#1\right) \\
&=&
\sum_{i,j}\left(\e\#\zeta[\gamma(x_i
x_j{}_{(1)})]\right)
\otimes\left(x_i^\ast\#\zeta[\gamma(x_j{}_{(2)})]\right)
\otimes\left(x_j^\ast\#1\right) \\
&\overset{\text{Lemma \ref{lem:PAMS properties}(2)}}{=}&
\sum_{i,j}\left(\e\#\la\e, x_i x_j{}_{(1)}\ra 1\right)
\otimes\left(x_i^\ast\#\la\e, x_j{}_{(2)}\ra 1\right)
\otimes\left(x_j^\ast\#1\right) \\
&=&
(\e\#1)\otimes(\e\#1)\otimes(\e\#1).
\end{eqnarray*}

Moreover, we have the following computations for the ``comultiplication'' $\pd{\Delta}$ on the left partial dual $C^\ast\#B$: Note that $\gamma$ is a coalgebra map, and $\zeta$ is an algebra map. Thus for every $x^\ast\in C^\ast$ and $b\in B$,
\begin{eqnarray*}
\pd{\Delta}(x^\ast\#1)
&\overset{(\ref{eqn:Delta(x*)})}{=}&
\sum_{i}\left(x^\ast_{(1)}\#\zeta[\gamma(x_i)\leftharpoonup x^\ast_{(2)}]\right)\otimes\left(x^\ast_i\#1\right)  \\
&=&
\sum_{i}\left(x^\ast_{(1)}\#\zeta[\langle x^\ast_{(2)}, \gamma(x_i){}_{(1)}\rangle\gamma(x_i){}_{(2)}]\right)
\otimes\left(x^\ast_i\#1\right) \\
&=&
\sum_{i}\left(x^\ast_{(1)}\#\zeta[\langle x^\ast_{(2)}, \gamma(x_i{}_{(1)})\rangle\gamma(x_i{}_{(2)})]\right)
\otimes\left(x^\ast_i\#1\right) \\
&\overset{\text{Lemma \ref{lem:PAMS properties}(2)}}{=}&
\sum_{i}(x^\ast_{(1)}\#\langle x^\ast_{(2)}, \gamma(x_i{}_{(1)})\rangle\la\e, x_i{}_{(2)}\ra1)
\otimes(x^\ast_i\#1) \\
&=&
\sum_{i}(x^\ast_{(1)}\#\langle \gamma^\ast(x^\ast_{(2)}),x_i\rangle1)
\otimes(x^\ast_i\#1)  \\
&=&
\sum_{i}(x^\ast_{(1)}\#1)\otimes
(\gamma^\ast(x^\ast_{(2)})\#1),
\end{eqnarray*}
and
\begin{eqnarray*}
\pd{\Delta}(\e\#b)
&\overset{(\ref{eqn:Delta(b)})}{=}&
\sum_{i}\left(\e\#\zeta[\gamma(x_i)b_{(1)}]\right)
\otimes(x^\ast_i\#b_{(2)})
\overset{\text{Lemma \ref{lem:PAMS properties}(2)}}{=}
\sum_{i}\left(\e\#\la\e,x_i\ra\zeta(b_{(1)})\right)
\otimes\left(x^\ast_i\#b_{(2)}\right) \\
&=&
\sum_{i}\left(\e\#\zeta(b_{(1)})\right)\otimes(\e\#b_{(2)})
\end{eqnarray*}
both hold.
Consequently, we find according to Definition \ref{def: padq-Ha}(2) that
\begin{eqnarray}\label{eqn:Delta(x* b)}
\pd{\Delta}(x^\ast\#b)
&=& \pd{\Delta}(x^\ast\#1)\pd{\Delta}(\e\#b)  \nonumber  \\
&=& \sum(x^\ast_{(1)}\#\zeta(b_{(1)}))\otimes
(\gamma^\ast(x^\ast_{(2)})\#b_{(2)})
\;\;\;\;\;\;\;\;(\forall x^\ast\in C^\ast,\;\forall b\in B).\;\;\;\;
\end{eqnarray}

Now let us show that the Hopf algebra $B$ has comultiplication $\Delta_B:b\mapsto\sum\zeta(b_{(1)})\otimes b_{(2)}$. Since $\zeta$ is assumed to be coalgebra map, we know for each $b\in B$ that
\begin{eqnarray*}
\Delta_B(b)
&\overset{\text{Lemma \ref{lem:PAMS properties}(1)}}{=}&
\Delta_B(\zeta[\iota(b)])
~=~ (\zeta\otimes\zeta)\circ\Delta(\iota(b))  \\
&=& \sum \zeta[\iota(b)_{(1)}]\otimes\zeta[\iota(b)_{(2)}]
~=~ \sum\zeta(b_{(1)})\otimes b_{(2)},
\end{eqnarray*}
where the last equality is because $\iota$ is a left $H$-comodule map by Definition \ref{def:PAMS}(1).

Similarly, one could also find that the Hopf algebra $C^\ast$ has comultiplication $x^\ast\mapsto \sum x^\ast_{(1)}\otimes\gamma^\ast(x^\ast_{(2)})$.
As a conclusion, the left partial dual $C^\ast\#B$ is the tensor product $C^\ast\otimes B$ as a coalgebra with comultiplication (\ref{eqn:Delta(x* b)}) and counit (\ref{eqn:epsilon}).
\end{proof}

At the end of this subsection, we introduce the tensor equivalences
\begin{equation}\label{eqn:Doi}
\Rep(C^\ast\#B)\approx {}^{}_{C^\ast}\M^{H^\ast}_{C^\ast}
\approx \Rep(H)_{\Rep(B)}^\ast,
\end{equation}
which can be regarded as the \textit{reconstruction theorem for left partial duals} by \cite[Section 4]{Li23}.
Here:
\begin{itemize}
\item
$\Rep(B)$ is regarded as the left $\Rep(H)$-module category in the sense of \cite[Section 1.5]{AM07} determined by the left $H$-comodule algebra injection $\iota:B\rightarrowtail H$. Specifically, the module product bifunctor
$\ogreaterthan:\Rep(H)\times \Rep(B)\rightarrow \Rep(B)$ sends each object $(X,V)$ to the space $X\otimes_\k V$ with $B$-action defined via
\begin{equation}\label{eqn:Rep(H)modRep(B)}
b\cdot(x\otimes v)=\sum b_{(1)}x\otimes b_{(2)}v\;\;\;\;\;\;\;\;
(\forall b\in B,\;\forall x\in X,\forall v\in V).
\end{equation}
Moreover,
$$\Rep(H)_{\Rep(B)}^\ast:=\mathrm{Rex}_{\Rep(H)}(\Rep(B))^\mathrm{rev}$$
is the \textit{dual tensor category} consisting of right exact $\Rep(H)$-endofunctors on $\Rep(B)$, with tensor product bifunctor chosen as the \textit{opposite} to the composition;

\item
${}^{}_{C^\ast}\M^{H^\ast}_{C^\ast}$ is the category
of finite-dimensional \textit{relative Doi-Hopf modules}. Specifically, it consists of finite-dimensional $C^\ast$-$C^\ast$-bimodules $M$ equipped with right $H^\ast$-comodule structure preserving both left and right $C^\ast$-actions:
For any $m\in M$ and $x^\ast\in C^\ast$, the equations
\begin{equation}\label{eqn: rcomod of lx*mod}
\sum(x^\ast\cdot m)_{(0)}\otimes(x^\ast\cdot m)_{(1)}
= \sum x^\ast_{(1)}\cdot m_{(0)}\otimes x^\ast_{(2)} m_{(1)}
\in M\otimes H^\ast,
\end{equation}
\begin{equation}\label{eqn: rcomod of rx*mod}
\sum(m\cdot x^\ast)_{(0)}\otimes(m\cdot x^\ast)_{(1)}
= \sum m_{(0)}\cdot x^\ast_{(1)} \otimes m_{(1)} x^\ast_{(2)}
\in H^\ast\otimes M
\end{equation}
hold, where $m\mapsto\sum m_{(0)}\otimes m_{(1)}$ denotes the right $H^\ast$-comodule structure on $M$. It is mentioned in \cite[Section 4.2]{Li23} that ${}^{}_{C^\ast}\M^{H^\ast}_{C^\ast}$ is a finite tensor category.
\end{itemize}

\begin{lemma}(\cite[Theorem 4.22]{Li23})\label{lem: reconstructionthm}
Let $H$ be a finite-dimensional Hopf algebra. Suppose that
\begin{equation*}
\begin{array}{ccc}
\xymatrix{
B \ar@<.5ex>[r]^{\iota} & H \ar@<.5ex>@{-->}[l]^{\zeta} \ar@<.5ex>[r]^{\pi}
& C \ar@<.5ex>@{-->}[l]^{\gamma}  }
&
\;\;\text{and}\;\;
&
\xymatrix{
C^\ast \ar@<.5ex>[r]^{\pi^\ast}
& H^\ast \ar@<.5ex>@{-->}[l]^{\gamma^\ast} \ar@<.5ex>[r]^{\iota^\ast}
& B^\ast \ar@<.5ex>@{-->}[l]^{\zeta^\ast}  },
\end{array}
\end{equation*}
is a partially admissible mapping system $(\zeta, \gamma^\ast)$.
Then there is a tensor equivalence $\Phi$ between
\begin{itemize}
  \item [(1)] The category ${}^{}_{C^\ast}\M^{H^\ast}_{C^\ast}$ of finite-dimensional relative Doi-Hopf modules, and
  \item [(2)] The category of finite-dimensional representations of the left partial dual $C^\ast\#B$ determined by $(\zeta, \gamma^\ast)$,
\end{itemize}
defined as
$$\begin{array}{cccc}
\Phi: & {}^{}_{C^\ast}\M^{H^\ast}_{C^\ast} &\approx& \Rep(C^\ast\#B), \\
&M&\mapsto& \overline{M}=M/M(C^\ast)^+,
\end{array}$$
with monoidal structure
$$
\begin{array}{cccc}
J_{M,N}: & \overline{M}\otimes\overline{N}&\cong&
\overline{M\otimes_{C^\ast}N} \\
&\overline{m}\otimes\overline{n} &\mapsto&
\sum\overline{m_{(0)}\overline{\gamma}^\ast(m_{(1)})\otimes_{C^\ast}n},
\end{array}
$$
where $(C^\ast)^+$ denotes the preimage of $\pi^\ast(C^\ast)\cap\ker(\e_{H^\ast})$ under the injection $\pi^\ast$.
\end{lemma}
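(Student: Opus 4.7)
The plan is to exhibit an explicit left $C^\ast\#B$-module structure on $\Phi(M)=\overline{M}$, construct a quasi-inverse $\Psi$, and then verify that $(\Phi,J)$ is a monoidal functor. Since $\pi^\ast$ is an algebra injection, $(C^\ast)^+$ coincides with $\ker(\e_{C^\ast})$ under $\pi^\ast$, so $\overline{M}=M/M(C^\ast)^+$ inherits a residual left $C^\ast$-action from $M$. To place a left $B$-action on $\overline{M}$, I would dualize the right $H^\ast$-coaction on $M$ to a left $H$-action $h\cdot m=\sum m_{(0)}\la m_{(1)},h\ra$ and restrict along $\iota$; using (\ref{eqn: rcomod of rx*mod}) together with the fact that $\iota$ is an $H$-comodule map, this action descends to $\overline{M}$. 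The compatibility between the resulting left $C^\ast$- and $B$-actions matches the smash-product rule (\ref{eqn:smashprod1}), because the left $H$-action on $C^\ast$ through $\rightharpoonup$ (see (\ref{hit action})) is dual to the right $H$-module coalgebra structure of $C$ underlying the comodule structure of $M$.

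For the categorical equivalence, I would construct a quasi-inverse by setting $\Psi(V):=H^\ast\otimes_{B^\ast}V$, where $B^\ast$ acts on $H^\ast$ on the right via $\iota^\ast$ and on $V$ via the embedding $b\mapsto\e\#b$ dualized. The section $\zeta^\ast$ built into the partially admissible mapping system decomposes $H^\ast\cong C^\ast\otimes B^\ast$ as a right $B^\ast$-module, so $\Psi(V)\cong C^\ast\otimes V$ with bimodule and comodule data twisted by $\gamma^\ast$ and its convolution inverse. The obvious projection $\Psi(V)\twoheadrightarrow V$ then yields $\Phi\Psi\cong\Id$, while the canonical splitting $M\cong C^\ast\otimes\overline{M}$ afforded by $\overline{\gamma}^\ast$ in any $M\in{}_{C^\ast}\M^{H^\ast}_{C^\ast}$ yields $\Psi\Phi\cong\Id$.

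Next I would establish the monoidal structure. Given $J_{M,N}(\overline{m}\otimes\overline{n})=\overline{\sum m_{(0)}\overline{\gamma}^\ast(m_{(1)})\otimes_{C^\ast}n}$, the verifications proceed in order: (i) well-definedness, that the underlying map $M\otimes N\to\overline{M\otimes_{C^\ast}N}$ factors through both quotients, using (\ref{eqn: rcomod of rx*mod}) and the convolution-inverse identity for $\overline{\gamma}^\ast$; (ii) $C^\ast\#B$-linearity, checked separately for the $C^\ast$-part via (\ref{eqn: rcomod of lx*mod}) and for the $B$-part via the interplay of the $H^\ast$-coaction with $\iota$; (iii) the pentagon and unit constraints, the latter reducing to matching $\overline{\k}\cong\k$ with the trivial $C^\ast\#B$-module determined by $\pd{\e}$.

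The principal obstacle lies in the coherence of $J_{M,N}$. Since $\overline{\gamma}^\ast$ is only the convolution inverse of a linear section, and is in general neither an algebra nor a coalgebra map, each identity must be unpacked delicately. The crucial input is the defining relation $(\iota\circ\zeta)\ast(\gamma\circ\pi)=\id_H$ from Definition \ref{def:PAMS}(6) together with Lemma \ref{lem:PAMS properties}. In particular, the pentagon axiom forces one to manipulate iterated applications of $\overline{\gamma}^\ast$ against the coassociativity of the $H^\ast$-coaction, and one expects the inverse associator $\pd{\phi}^{-1}$ from (\ref{eqn:phi^-1}) to appear precisely when comparing $(M\otimes_{C^\ast}N)\otimes_{C^\ast}P$ with $M\otimes_{C^\ast}(N\otimes_{C^\ast}P)$ after descending to cokernels, reflecting that $C^\ast\#B$ is genuinely only a quasi-Hopf algebra.
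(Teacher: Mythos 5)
First, a point of comparison: the paper does not prove this lemma at all --- it is imported verbatim from \cite[Theorem 4.22]{Li23}, and the only places the paper engages with its content are Proposition \ref{prop: te lDoi}, Lemma \ref{lem:leftcoiso} and Corollary \ref{cor: te lDoi}, where the quasi-inverse is realized as a \emph{cotensor} product $V\mapsto V\square_{B^\ast}H^\ast$ (written $V\square_{H^\ast}(K^{\ast\cop}\otimes H^\ast)$ there) and then simplified to $V\otimes C^\ast$. Your description of the forward functor is consistent with this: the residual left $C^\ast$-action together with the $B$-action $b\cdot\overline{m}=\sum\overline{m_{(0)}}\la m_{(1)},\iota(b)\ra$ is exactly the structure the paper uses later (see the proof of Proposition \ref{prop: four equiv KYDH}), and your remark that compatibility of $J_{M,N}$ with associativity must absorb the associator $\pd{\phi}^{-1}$ correctly identifies where the substantive work lies, even though you do not carry it out.

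The concrete gap is your quasi-inverse. $\Psi(V):=H^\ast\otimes_{B^\ast}V$ is not well formed: $B$ is only a left coideal subalgebra of $H$, so $B^\ast$ is a coalgebra rather than an algebra and there is no relative tensor product over it; $\iota^\ast:H^\ast\to B^\ast$ is a coalgebra map (dual to the algebra map $\iota$), so one cannot ``act via $\iota^\ast$''; and $V\in\Rep(C^\ast\#B)$ carries a left $B$-action, not a $B^\ast$-action. The construction the paper actually invokes from \cite{Tak79} is the cotensor product $V\square_{B^\ast}H^\ast$, where the right $B^\ast$-comodule structure on $V$ is dual to its $B$-module structure and $H^\ast$ is a left $B^\ast$-comodule via $(\iota^\ast\otimes\id)\circ\Delta_{H^\ast}$; with that replacement your outline of $\Phi\Psi\cong\Id$ and $\Psi\Phi\cong\Id$ via the cleaving data $\zeta,\gamma$ becomes the standard argument, but as written the induction step, and hence the equivalence itself, does not go through. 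A smaller imprecision: the descent of the $B$-action to $\overline{M}$ uses not merely that $\iota$ is a comodule map but the coinvariance condition of Definition \ref{def:PAMS}(2), i.e.\ $\sum\iota(b)_{(1)}\otimes\pi(\iota(b)_{(2)})=\iota(b)\otimes 1_C$, which is what annihilates the offending terms $\sum m_{(0)}\bigl(\iota(b)_{(2)}\rightharpoonup x^\ast\bigr)\la m_{(1)},\iota(b)_{(1)}\ra$ modulo $M(C^\ast)^+$.
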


\begin{remark}
Indeed, the equivalence $\Phi$ is the same as the functor provided in \cite[Section 1]{Tak79}.
\end{remark}

\section{Realization of the quantum double as left partial dual, and consequences}\label{Section3}

For the remaining of this paper, let $H$ and $K$ be finite-dimensional Hopf algebras with a Hopf pairing $\sigma: K^\ast\otimes H \rightarrow \k$.
Recall in Notation \ref{not:sigma lr} that there exist Hopf algebra maps
$$\sigma_l: K^\ast\rightarrow H^\ast,\;\;k^\ast\mapsto\sigma(k^\ast, -)
\;\;\;\;\text{and}\;\;\;\;\sigma_r: H\rightarrow K,\;\;h\mapsto\sigma(-, h).$$

\subsection{Quantum double as a left partial dual of the tensor product Hopf algebra}

Our main goal in this subsection is to show that the quantum double $K^{\ast\cop}\bowtie_\sigma H$ is a left partially dualized Hopf algebra of $K^\op\otimes H$.

\begin{lemma}\label{lem:PAMSleft0}
\begin{itemize}
\item[(1)]
The algebra $H$ is a left $K^\op\otimes H$-comodule algebra via coaction
\begin{equation}\label{eqn:l Kop tensor H ms}
\rho: H\rightarrow(K^\op\otimes H)\otimes H,\;\; h\mapsto\sum(\sigma_r(S^{-1}(h_{(3)}))\otimes h_{(1)})\otimes h_{(2)};
\end{equation}
The coalgebra
$K^\op$ is a right $K^\op\otimes H$-module coalgebra via action
\begin{equation}\label{eqn:r Kop tensor H ms}
\btl: K^\op\otimes(K^\op\otimes H)\rightarrow K^\op,\;\; l\otimes(k\otimes h)\mapsto kl\sigma_r (h).
\end{equation}

\item[(2)]
With structures defined in (1),
\begin{equation}\label{def: iota}
\iota: H\rightarrow K^\op\otimes H,\;\;
h\mapsto\sum \sigma_r(S^{-1}(h_{(2)}))\otimes h_{(1)}.
\end{equation}
is a map of left $K^\op\otimes H $-comodule algebras,
and
\begin{equation}\label{def: pi}
\pi: K^\op\otimes H\rightarrow K^\op, \;\;k\otimes h\mapsto k\sigma_r(h).
\end{equation}
is a map of right $K^\op\otimes H$-module coalgebras.

\item[(3)]
With notations in (1), the image of $\iota$ equals to the space of the coinvariants of the right $K^\op$-comodule $K^\op\otimes H$ with structure $(\mathrm{id}_H\otimes\pi)\circ\Delta$.
\end{itemize}
\end{lemma}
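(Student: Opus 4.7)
My plan is to verify (1), (2), (3) by direct Sweedler-notation computation, exploiting that $\sigma_r$ is a Hopf algebra map (so it respects $\Delta$ and $S$) and that $\Delta_{K^\op\otimes H}$ is the componentwise coproduct, while keeping in mind that the first tensor slot carries the reversed multiplication of $K^\op$.

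For (1), I would first check that $\rho$ is coassociative and counital: after expanding $\Delta(\sigma_r(S^{-1}(h_{(3)})))=\sum\sigma_r(S^{-1}(h_{(3)(2)}))\otimes\sigma_r(S^{-1}(h_{(3)(1)}))$ and reindexing via coassociativity of $\Delta_H$, both sides of the coassociativity axiom collapse to $\sum(\sigma_r(S^{-1}(h_{(5)}))\otimes h_{(1)})\otimes(\sigma_r(S^{-1}(h_{(4)}))\otimes h_{(2)})\otimes h_{(3)}$. That $\rho$ is an algebra map reduces to the identity $\sigma_r(S^{-1}(h_{(3)}h'_{(3)}))=\sigma_r(S^{-1}(h'_{(3)}))\sigma_r(S^{-1}(h_{(3)}))$, which precisely matches the reversed multiplication in $K^\op$ on the first slot. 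The module coalgebra axioms for $\btl$ are then immediate since $\sigma_r$ is simultaneously an algebra and a coalgebra map.

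For (2), $\iota$ being an algebra map and $\pi$ being a coalgebra map use the same ingredients. To see that $\iota$ is a left $K^\op\otimes H$-comodule map, I would check that both $\Delta\circ\iota(h)$ and $(\id\otimes\iota)\circ\rho(h)$ expand after one coassociativity step to $\sum(\sigma_r(S^{-1}(h_{(4)}))\otimes h_{(1)})\otimes(\sigma_r(S^{-1}(h_{(3)}))\otimes h_{(2)})$; that $\pi$ is a right module map is just $\pi((k\otimes h)(k'\otimes h'))=k'k\sigma_r(h)\sigma_r(h')=\pi(k\otimes h)\btl(k'\otimes h')$. The substance lies in (3). For the forward inclusion $\iota(H)\subseteq(K^\op\otimes H)^{\mathrm{co}\,K^\op}$, I would apply $(\id_{K^\op\otimes H}\otimes\pi)\circ\Delta$ directly to $\iota(h)$: after expanding to $\Delta_H^{(3)}$, the middle two Sweedler legs collapse by the antipode identity $\sum S^{-1}(y_{(2)})y_{(1)}=\varepsilon(y)1$, producing exactly $\iota(h)\otimes 1$. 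For the reverse inclusion, my plan is to build an isomorphism of right $K^\op$-comodules
$$\Phi\colon H\otimes K^\op\longrightarrow K^\op\otimes H,\qquad h\otimes k\;\mapsto\;\sum k\sigma_r(S^{-1}(h_{(2)}))\otimes h_{(1)},$$
(in fact $\Phi(h\otimes k)=\iota(h)(k\otimes 1)$), where $H\otimes K^\op$ carries the coaction $\id_H\otimes\Delta_{K^\op}$ on the second factor. A two-sided inverse is $\Psi(k\otimes h)=\sum h_{(1)}\otimes k\sigma_r(h_{(2)})$, verified using both $\sum S^{-1}(y_{(2)})y_{(1)}=\varepsilon(y)1$ and $\sum y_{(2)}S^{-1}(y_{(1)})=\varepsilon(y)1$; comodule compatibility for $\Phi$ uses the same collapsing identity. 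Since the coinvariants of $H\otimes K^\op$ are clearly $H\otimes\k$ and $\Phi(h\otimes 1)=\iota(h)$, the coinvariants of $K^\op\otimes H$ coincide with $\iota(H)$.

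The main obstacle will be bookkeeping: the interplay of the reversed multiplication in $K^\op$, the algebra-antihomomorphism property of $S^{-1}$, and iterated coassociativity with up to five Sweedler legs is mechanical but highly error-prone. The cleanest way to keep the computations short is to record at the outset the three collapse identities $\sum y_{(1)}S(y_{(2)})=\varepsilon(y)1$, $\sum S^{-1}(y_{(2)})y_{(1)}=\varepsilon(y)1$, and $\sum y_{(2)}S^{-1}(y_{(1)})=\varepsilon(y)1$, and reuse them whenever two adjacent Sweedler legs sitting inside $\sigma_r$ are to be annihilated.
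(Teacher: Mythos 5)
Your proposal is correct, and the computations you sketch (coassociativity of $\rho$, the algebra-map property reducing to the reversed product on the first slot, the collapse $\sum S^{-1}(h_{(3)})h_{(2)}\otimes h_{(1)}\mapsto 1\otimes h$ for the forward inclusion in (3)) all check out. The interesting divergence from the paper is twofold. In parts (1) and (2) the paper avoids most of the Sweedler bookkeeping by observing that $\rho$ is the pushforward of the regular $H$-$H$-bicomodule structure (viewed as a left $H^{\cop}\otimes H$-comodule) along the coalgebra map $\sigma_r\circ S^{-1}:H^{\cop}\to K^{\op}$, and that $\btl$ is likewise induced from the regular $K$-$K$-bimodule along the algebra map $\sigma_r$; the comodule-algebra and module-coalgebra axioms are then inherited rather than recomputed, and the comodule-map property of $\iota$ falls out of the identity $\iota=(\id\otimes\e)\circ\rho$ together with the coaction axiom. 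Your direct verification is equally valid but more laborious. In part (3) the difference is more substantive: for the reverse inclusion the paper endows $K^{\op}\otimes H$ with a right $K^{\op}$-Hopf module structure and invokes the fundamental theorem of Hopf modules to get $\dim\big((K^{\op}\otimes H)_{\mathrm{coinv}}\big)=\dim H$, concluding by a dimension count; you instead trivialize the comodule explicitly via the isomorphism $\Phi(h\otimes k)=\iota(h)(k\otimes 1)$ with inverse $\Psi(k\otimes h)=\sum h_{(1)}\otimes k\sigma_r(h_{(2)})$, both of which I have checked are mutually inverse comodule maps. Your route is more self-contained (no appeal to the fundamental theorem, and injectivity of $\iota$ comes for free from the injectivity of $\Phi$), at the cost of two extra antipode-collapse verifications; the paper's route is shorter given the fundamental theorem but needs the separate observation that $\iota$ is injective so that $\dim\mathrm{Im}(\iota)=\dim H$. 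Either argument is acceptable.
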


\begin{proof}
\begin{itemize}
\item[(1)]
These claims can be verified by direct computations, but here we explain how the structures arises from regular ones.

Let us show that $\rho$ is a left $K^\op\otimes H$-comodule structure on $H$ at first:
Consider the regular $H$-$H$-bicomodule structure on $H$, which is known to be equivalent to a left $H^\cop\otimes H$-comodule structure
\begin{equation}\label{eqn:l Hcop tensor H ms}
H\rightarrow (H^\cop\otimes H)\otimes H,\;\;\;\;
h\mapsto \sum (h_{(3)}\otimes h_{(1)})\otimes h_{(2)}.
\end{equation}
Furthermore, note that $\sigma_r\circ S^{-1}:H^\cop\rightarrow K^\op$ is a coalgebra map. Thus it induces from (\ref{eqn:l Hcop tensor H ms}) a left $K^\op\otimes H$-comodule structure on $H$, which is exactly $\rho$ defined in (\ref{eqn:l Kop tensor H ms}).

On the other hand,
since (\ref{eqn:l Hcop tensor H ms}) and $\sigma_r\circ S^{-1}:H^\cop\rightarrow K^\op$ are both algebra maps,
we conclude that $\rho$ is also an algebra map. This means that $H$ is a left $K^\op\otimes H$-comodule algebra via the comodule structure $\rho$.

Next, we show that $\btl$ is a right $K^{\op}\otimes H$-module structure on $K^\op$: Consider the free left and right $K$-module structures on $K^\op$ defined by the multiplication on $K$, and they make $K^\op$ become a $K$-$K$-bimodule. It is equivalent to a right $K^\op\otimes H$-module structure
\begin{equation}\label{eqn:r Kop tensor K ms}
K^\op\otimes(K^\op\otimes K)\rightarrow K^\op,\;\;\;\; l\otimes(k\otimes k')\mapsto klk'.
\end{equation}
Then induced by the algebra map $\sigma_r:H\rightarrow K$, we know that $K^\op$ admits a right $K^\op\otimes H$-module structure $\btl$.

Finally, note that $\sigma_r:H\rightarrow K$ and (\ref{eqn:r Kop tensor K ms}) are both coalgebra maps.
This implies that $\btl$ is also a coalgebra map,
and hence $K^{\op}$ is a right $K^{\op}\otimes H$-module coalgebra via the module structure $\btl$.

\item[(2)]
Let us verify that $\iota$ defined in (\ref{def: iota}) preserves left $K^{\op}\otimes H$-coactions, where the left $K^{\op}\otimes H$-comodule structure on $H$ is $\rho$. Indeed, it is straightforward to find that $\iota=(\id_{K^\op\otimes H}\otimes\e)\circ\rho$ holds, and hence
\begin{eqnarray*}
    \Delta_{K^\op\otimes H}\circ\iota
    &=& \Delta_{K^\op\otimes H}\circ(\id_{K^\op\otimes H}\otimes\e)\circ\rho  \\
    &=& (\id_{K^\op\otimes H}\otimes\id_{K^\op\otimes H}\otimes\e)\circ(\Delta_{K^\op\otimes H}\otimes\id_H)\circ\rho  \\
    &=& (\id_{K^\op\otimes H}\otimes\id_{K^\op\otimes H}\otimes\e)\circ
    (\id_{K^\op\otimes H}\otimes\rho)\circ\rho  \\
    &=& (\id_{K^\op\otimes H}\otimes\iota)\circ\rho,
\end{eqnarray*}
where the third equality is because $\rho$ is a left $K^\op\otimes H$-comodule structure.

Besides, we know in (1) that $\rho$ is an algebra map, which implies that $\iota=(\id_{K^\op\otimes H}\otimes\e)\circ\rho$ is also an algebra map.
In conclusion, $\iota$ is a map of left $K^\op\otimes H $-comodule algebras.

Next, we show that $\pi$ (\ref{def: pi}) is a map of right $K^\op\otimes H$-modules by direct computations: For any $k, k'\in K^\op$ and $h, h'\in H$, we have
    \begin{eqnarray*}
    \pi\left((k\otimes h)(k'\otimes h')\right)
    &\overset{(\ref{def: pi})}{=}&
    \pi(k'k\otimes hh')
    ~=~
    k'k\sigma_r(hh')\\
    &=&
    k'\left(k\sigma_r(h)\right)\sigma_r(h')
    ~\overset{(\ref{eqn:r Kop tensor H ms})}{=}~
    k\sigma_r(h)\btl(k'\otimes h')\\
    &\overset{(\ref{def: pi})}{=}&
    \pi(k\otimes h)\btl(k'\otimes h')
    \end{eqnarray*}
Moreover, one can also compute directly to prove $$\Delta_{K^{\op}}\circ\pi=(\pi\otimes\pi)\circ\Delta_{K^{\op}\otimes H}
\;\;\;\;\text{and}\;\;\;\;
\e_{K^{\op}}\circ\pi=\e_{K^{\op}\otimes H}$$
according to the fact that $\sigma_r$ is a coalgebra map.
Thus, $\pi$ is a map of right $K^{\op}\otimes H$-module coalgebras.

\item[(3)]
This can be implied by combining the coopposite version of \cite[Proposition 3.10]{Mas94} as well as a fact in \cite[Theorem 6.1]{Skr07} that a finite-dimensional Hopf algebra must be cocleft over its left coideal subalgebra. However, we provide here a simpler proof instead:

It is direct to compute that
\begin{eqnarray*}
\sum\iota(h)_{(1)}\otimes\pi[\iota(h)_{(2)}] &=& \sum\left(\sigma_r(S^{-1}(h_{(4)}))\otimes h_{(1)}\right)\otimes\pi\left[\sigma_r(S^{-1}(h_{(3)}))\otimes h_{(2)}\right] \\
&=&
\sum\left(\sigma_r(S^{-1}(h_{(4)}))\otimes h_{(1)}\right)\otimes\sigma_r(S^{-1}(h_{(3)}))\sigma_r(h_{(2)}) \\
&=&
\sum\left(\sigma_r(S^{-1}(h_{(2)}))\otimes h_{(1)}\right)\otimes 1_{K^{\op}} ~=~ \iota(h)\otimes 1_{K^{\op}}
\end{eqnarray*}
holds for all $h\in H$, and hence
the image $\mathrm{Im}(\iota)$ is contained in the space $(K^\op\otimes H)_{\mathrm{coinv}}$ of the coinvariants.
Thus it suffices to show that $\dim(\mathrm{Im}(\iota))=\dim((K^\op\otimes H)_{\mathrm{coinv}})$.

In fact, one could verify that $K^\op\otimes H$ is a right $K^\op$-Hopf module with comodule structure $(\mathrm{id}_H\otimes\pi)\circ\Delta$ and module structure
$$
(K^\op\otimes H)\otimes K^\op\rightarrow K^\op\otimes H,\;\;\;(k\otimes h)\otimes l\mapsto lk\otimes h.
$$
Consequently, we know by the fundamental theorem of Hopf modules (\cite[Theorem 4.1.1]{Swe69}) that
$\dim(K^\op\otimes H)=\dim(K^\op)\dim((K^\op\otimes H)_{\mathrm{coinv}})$, which implies
$$\dim((K^\op\otimes H)_{\mathrm{coinv}})=\frac{\dim(K^\op\otimes H)}{\dim(K^\op)}
=\dim(H)=\dim(\mathrm{Im}(\iota)).$$
\end{itemize}
\end{proof}

Now we aim to construct a partially admissible mapping system $(\zeta, \gamma^\ast)$ for $\iota:H\rightarrowtail K^\op\otimes H$ defined in Lemma \ref{lem:PAMSleft0}(2).

\begin{lemma}\label{lem:PAMSleft}
With notations in Lemma \ref{lem:PAMSleft0}, we have a partially admissible mapping system
\begin{equation}\label{eqn:admissiblemapsys}
\begin{array}{ccc}
\xymatrix{
H\ar@<.5ex>[r]^{\iota\;\;\;\;\;\;}
& K^\op\otimes H
 \ar@<.5ex>@{-->}[l]^{\zeta\;\;\;\;\;\;} \ar@<.5ex>[r]^{\;\;\;\;\;\;\pi}
& K^\op \ar@<.5ex>@{-->}[l]^{\;\;\;\;\;\;\gamma}  }
&\;\;\text{and}\;\;&
\xymatrix{
K^{\ast\cop}\ar@<.5ex>[r]^{\pi^\ast\;\;\;\;\;\;}
& K^{\ast\cop}\otimes H^\ast
 \ar@<.5ex>@{-->}[l]^{\gamma^\ast\;\;\;\;\;\;}
 \ar@<.5ex>[r]^{\;\;\;\;\;\;\;\;\iota^\ast}
& H^\ast \ar@<.5ex>@{-->}[l]^{\;\;\;\;\;\;\;\;\zeta^\ast}  }
\end{array}
\end{equation}
for $\iota$, where
\begin{equation}\label{zeta}
\zeta: K^\op\otimes H\rightarrow H, \;\; k\otimes h\mapsto\varepsilon(k)h
\end{equation}
  and
\begin{equation}\label{gamma}
  \gamma: K^\op\rightarrow K^\op\otimes H, \;\; k\mapsto k\otimes1.
\end{equation}
\end{lemma}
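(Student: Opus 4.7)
Lemma \ref{lem:PAMSleft0} already establishes conditions (1) and (2) of Definition \ref{def:PAMS} (for $\iota$, $\pi$, and the space of coinvariants). So the remaining task is to verify conditions (3)--(6) for the maps $\zeta$ and $\gamma$ from (\ref{zeta}) and (\ref{gamma}); the dual statements for $\zeta^\ast$ and $\gamma^\ast$ then follow automatically.

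First, I will note that both $\zeta$ and $\gamma$ are coalgebra maps: this is immediate for $\gamma(k) = k\otimes 1_H$ since $\Delta_{K^\op\otimes H}(k\otimes 1_H) = \sum(k_{(1)}\otimes 1_H)\otimes(k_{(2)}\otimes 1_H)$, and for $\zeta(k\otimes h) = \varepsilon(k)h$ it follows from the coproduct formula $\Delta_{K^\op\otimes H}(k\otimes h) = \sum(k_{(1)}\otimes h_{(1)})\otimes(k_{(2)}\otimes h_{(2)})$ together with the counit axiom. Since $H$ and $K^\op\otimes H$ are Hopf algebras and $K$ is finite-dimensional (so $S_K^{-1}$ exists), the convolution inverses can be written explicitly as $\overline{\zeta}(k\otimes h) = \varepsilon(k)S_H(h)$ and $\overline{\gamma}(k) = S_K^{-1}(k)\otimes 1_H$, verifying condition (3). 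Conditions (4) and (5) then reduce to short calculations: for the left $H$-action compatibility of $\zeta$, using $\iota(b) = \sum\sigma_r(S^{-1}(b_{(2)}))\otimes b_{(1)}$ one finds that the $\sigma_r(S^{-1}(b_{(2)}))$-factor contributes only through $\varepsilon$, and the counit axiom gives $\zeta(\iota(b)(k\otimes h)) = \varepsilon(k)bh = b\cdot\zeta(k\otimes h)$; for the right $K^\op$-coaction compatibility of $\gamma$, both sides of the required identity equal $\sum(k_{(1)}\otimes 1_H)\otimes k_{(2)}$ after one uses $\pi(k_{(2)}\otimes 1_H) = k_{(2)}$. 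Preservation of units and counits in (5) is immediate by inspection.

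The main obstacle is condition (6), the convolution identity $(\iota\circ\zeta)\ast(\gamma\circ\pi) = \id_{K^\op\otimes H}$. Evaluating on $k\otimes h$ and applying the counit identity $\sum\varepsilon(k_{(1)})k_{(2)} = k$ to absorb the $\varepsilon(k_{(1)})$ produced by $\zeta$, the left-hand side becomes $\sum\iota(h_{(1)})\cdot(k\sigma_r(h_{(2)})\otimes 1_H)$. Expanding $\iota(h_{(1)})$ via (\ref{def: iota}) and carefully unwinding the reversed multiplication in the $K^\op$-slot gives
$$\sum k\,\sigma_r\bigl(h_{(3)}S^{-1}(h_{(2)})\bigr)\otimes h_{(1)}.$$
The key identity $\sum x_{(2)}S^{-1}(x_{(1)}) = \varepsilon(x)1_H$---obtained by applying the antihomomorphism $S^{-1}$ to the standard antipode axiom $\sum x_{(1)}S(x_{(2)}) = \varepsilon(x)1_H$---then applies to $x = h_{(2)}$ within the iterated coproduct, yielding $\sum h_{(3)}S^{-1}(h_{(2)}) = \sum\varepsilon(h_{(2)})1_H$, so that the counit axiom collapses the entire expression to $k\otimes h$. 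The trickiest parts are tracking the reversed multiplication in the $K^\op$-slot and applying the correct (``coopposite'') form of the antipode axiom at the right moment; neither is conceptually deep, but careful index-bookkeeping with the triple coproduct is essential.
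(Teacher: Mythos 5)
Your proposal is correct and follows essentially the same route as the paper's proof: conditions (1)--(2) are delegated to Lemma \ref{lem:PAMSleft0}, the same explicit convolution inverses $\overline{\zeta}(k\otimes h)=\varepsilon(k)S_H(h)$ and $\overline{\gamma}(k)=S_K^{-1}(k)\otimes 1$ are used for (3), and the verification of (6) reduces, exactly as in the paper, to $\sum k\,\sigma_r(h_{(3)})\sigma_r(S^{-1}(h_{(2)}))\otimes h_{(1)}=k\otimes h$ via the identity $\sum x_{(2)}S^{-1}(x_{(1)})=\varepsilon(x)1$. The only difference is presentational: you compress the checks of (4) and (5) that the paper writes out in full.
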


\begin{proof}
Our goal is to check the requirements of $(\zeta,\gamma^\ast)$ to be a partially admissible mapping system. Note that (1) and (2) in Definition \ref{def:PAMS} are confirmed in Lemma \ref{lem:PAMSleft0}, and we will check the conditions (3) to (6).

\begin{itemize}
\item[(3)]
It is straightforward to verify that the maps
$$\begin{array}{ccc}
{\begin{array}{cccc}
\overline{\zeta}: & K^\op\otimes H &\rightarrow& H  \\
& k\otimes h &\mapsto& \varepsilon(k)S(h)
\end{array}}
&\;\;\; \text{and}\;\;\;&
{\begin{array}{cccc}
\overline{\gamma}: & K^\op &\rightarrow& K^\op\otimes H  \\
& k &\mapsto& S^{-1}(k)\otimes 1
\end{array}}
\end{array}$$
are respectively convolution inverses of $\zeta$ and $\gamma$.

\item[(4)]
Let us verify that the map $\zeta$ defined in (\ref{zeta}) preserves left $H$-actions. Recall that the left $H$-module structure on $K^{\op}\otimes H$ should be $m_{K^{\op}\otimes H}\circ(\iota\otimes\id)$, that is,
\begin{equation}\label{cdot}
H\otimes(K^\op\otimes H) \rightarrow K^\op\otimes H,\;\;
h'\otimes(k\otimes h) \mapsto \sum k\sigma_r(S^{-1}(h'_{(2)}))\otimes h'_{(1)}h,
\end{equation}
and we have the following computation for any $h, h'\in H$ and $k\in K$,
\begin{eqnarray*}
\zeta(h'\cdot(k\otimes h))
&=&
\sum\zeta\big[k\sigma_r(S^{-1}(h'_{(2)}))\otimes h'_{(1)}h\big]
~=~ \e\big[k\sigma_r(S^{-1}(h'_{(2)}))\big] h'_{(1)}h  \\
&=& \e(k)h'h ~=~ h'\zeta(k\otimes h).
\end{eqnarray*}

Next we show that the map $\gamma$ defined in  (\ref{gamma}) preserves right $K^\op$-coactions, where the right $K^\op$-comodule structure on $K^\op\otimes H$ is
$$\begin{array}{cccc}
(\id\otimes\pi)\circ\Delta_{K^{\op}\otimes H}:
& K^\op\otimes H & \rightarrow & (K^\op\otimes H)\otimes K^\op, \;\; \\
& k\otimes h & \mapsto &
\sum (k_{(1)}\otimes h_{(1)})\otimes k_{(2)}\sigma_r(h_{(2)}).
\end{array}$$
Then for any $k\in K$, we have
\begin{eqnarray*}
(\id\otimes\pi)\circ\Delta_{K^{\op}\otimes H}\circ\gamma(k)
&=& \sum k_{(1)}\otimes1\otimes \pi(k_{(2)}\otimes1)
~=~ \sum k_{(1)}\otimes1\otimes k_{(2)}  \\
&=& (\gamma\otimes\id)\left(\sum k_{(1)}\otimes k_{(2)}\right)
~=~ (\gamma\otimes\id)\circ\Delta_{K^\op}(k).
\end{eqnarray*}

\item[(5)]
Note that $\iota$ defined in (\ref{def: iota}) and $\pi$ defined in (\ref{def: pi}) both preserve the units and counits of the Hopf algebras.
Then it is easy to see that $\zeta$ and $\gamma$ are biunitary.
\item[(6)] Finally, we need to show $(\iota\circ\zeta)\ast(\gamma\circ\pi)=\id_{K^\op\otimes H}$. For any $k\in K$ and $h\in H$, the equations
\begin{eqnarray*}
\left[(\iota\circ\zeta)\ast(\gamma\circ\pi)\right](k\otimes h)
&=&
\sum\iota\left[\zeta(k_{(1)}\otimes h_{(1)})\right]\gamma\left[\pi(k_{(2)}\otimes h_{(2)})\right] \\
&\overset{(\ref{zeta}),\;(\ref{gamma})}{=}&
\sum\iota[\e(k_{(1)})h_{(1)}]\gamma[k_{(2)}\sigma_r(h_{(2)})] \\
&=&
\sum\iota(h_{(1)})\gamma[k\sigma_r(h_{(2)})] \\
&\overset{(\ref{def: iota}),\;(\ref{gamma})}{=}&
\sum\left(\sigma_r(S^{-1}(h_{(2)}))\otimes h_{(1)}\right)
\left(k\sigma_r(h_{(3)})\otimes1\right) \\
&=&
\sum k\sigma_r(h_{(3)})\sigma_r(S^{-1}(h_{(2)}))\otimes h_{(1)}\\
&=&
k\otimes  h
\end{eqnarray*}
hold in $K^\op\otimes H$.
\end{itemize}
\end{proof}

Finally, the main result of this subsection can be introduced.

\begin{theorem}\label{prop: lpd qd}
The left partial dual $K^{\ast\cop}\# H$ of $K^\op\otimes H$ determined by the partially admissible mapping system $(\zeta, \gamma^\ast)$ in Lemma \ref{lem:PAMSleft} is the quantum double $K^{\ast\cop}\bowtie_\sigma H$.
\end{theorem}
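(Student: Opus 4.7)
The plan is to verify that every piece of Hopf algebra structure on $K^{\ast\cop}\#H$, as read off from Definition \ref{def: padq-Ha} using the data of Lemma \ref{lem:PAMSleft}, coincides with the corresponding structure on $K^{\ast\cop}\bowtie_\sigma H$ prescribed by (\ref{eqn: multiplication}) and (\ref{eqn: comultiplication}). First I would observe that the two maps $\zeta:K^\op\otimes H\to H$, $k\otimes h\mapsto\varepsilon(k)h$, and $\gamma:K^\op\to K^\op\otimes H$, $k\mapsto k\otimes 1$, are Hopf algebra maps (a straightforward verification). Consequently Lemma \ref{lem: lpd is Hopf alg} applies, so $K^{\ast\cop}\#H$ is already a Hopf algebra whose underlying coalgebra is the tensor product $C^\ast\otimes B$; the problem then reduces to matching multiplications, comultiplications, units, and counits on the common underlying vector space $K^{\ast\cop}\otimes H$.

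Next I would compute the right $(K^\op\otimes H)^\ast$-coaction on $C^\ast=K^{\ast\cop}$ that is dual to the action $\btl$ of (\ref{eqn:r Kop tensor H ms}). A direct Sweedler calculation---where care is required in switching between $K^\ast$- and $K^{\ast\cop}$-Sweedler conventions---gives
\[
k^\ast\;\longmapsto\;\sum k^\ast_{(2)}\otimes\bigl(k^\ast_{(1)}\otimes\sigma_l(k^\ast_{(3)})\bigr)
\in K^{\ast\cop}\otimes(K^{\ast\cop}\otimes H^\ast),
\]
so that the hit action (\ref{hit action}) reads $(k\otimes h)\rightharpoonup k^\ast=\sum k^\ast_{(2)}\langle k^\ast_{(1)},k\rangle\sigma(k^\ast_{(3)},h)$. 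Substituting this together with the left $K^\op\otimes H$-coaction $\rho$ on $H$ from (\ref{eqn:l Kop tensor H ms}) into the smash-product formula (\ref{eqn:smashprod1}) yields
\[
(k^\ast\#h)(k'^\ast\#h')=\sum k^\ast k'^\ast_{(2)}\,\sigma\bigl(k'^\ast_{(1)},S^{-1}(h_{(3)})\bigr)\,\sigma(k'^\ast_{(3)},h_{(1)})\,\#\,h_{(2)}h',
\]
which by the definition (\ref{eqn: sigmabar}) of $\overline{\sigma}$ is exactly (\ref{eqn: multiplication}).

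For the comultiplication I would apply (\ref{eqn:Delta(x* b)}) from the proof of Lemma \ref{lem: lpd is Hopf alg}: using $\gamma^\ast(k^\ast\otimes h^\ast)=\langle h^\ast,1_H\rangle\,k^\ast$ together with the pairing axiom $\sigma(k^\ast_{(3)},1_H)=\varepsilon(k^\ast_{(3)})$, the formula collapses to
\[
\pd{\Delta}(k^\ast\#h)=\sum(k^\ast_{(2)}\#h_{(1)})\otimes(k^\ast_{(1)}\#h_{(2)}),
\]
which matches (\ref{eqn: comultiplication}); the counit from (\ref{eqn:epsilon}) becomes $k^\ast\#h\mapsto\varepsilon(k^\ast)\varepsilon(h)$ since $1_{K^\op}=1_K$ and $\e_B=\varepsilon_H$. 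Once multiplication, unit, comultiplication and counit all agree, the antipodes must coincide by the uniqueness of the antipode in a Hopf algebra. The main obstacle throughout is the Sweedler bookkeeping in the second step: the reversed index pattern visible in (\ref{eqn: comultiplication}) is precisely what emerges from expressing the coaction on $K^{\ast\cop}$ in its coopposite Sweedler convention and then applying $\gamma^\ast$.
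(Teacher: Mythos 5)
Your proposal is correct and follows essentially the same route as the paper: compute the dual coaction on $K^{\ast\cop}$ and the hit action to match the smash-product multiplication with (\ref{eqn: multiplication}), then invoke Lemma \ref{lem: lpd is Hopf alg} (since $\zeta$ and $\gamma$ are Hopf algebra maps) to identify the coalgebra structure with the tensor product coalgebra $K^{\ast\cop}\otimes H$. Your explicit unwinding of (\ref{eqn:Delta(x* b)}) via $\gamma^\ast(k^\ast\otimes h^\ast)=\langle h^\ast,1_H\rangle k^\ast$ to recover the reversed index pattern of (\ref{eqn: comultiplication}) is slightly more detailed than the paper, which leaves that identification implicit, but it is the same argument.
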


\begin{proof}
Consider the partially admissible mapping system $(\zeta, \gamma^\ast)$ in (\ref{eqn:admissiblemapsys}), and
recall that the right $K^\op\otimes H$-module structure (\ref{eqn:r Kop tensor H ms})
\begin{equation*}
\btl: K^\op\otimes(K^\op\otimes H)\rightarrow K^\op,  \;\;\;k'\otimes(k\otimes h)\mapsto kk'\sigma_r(h),
\end{equation*}
of $K^\op$ will induce the right $K^{\ast\cop}\otimes H^\ast$-comodule structure of $K^{\ast\cop}$, which is as follows:
\begin{equation}\label{eqn: rightcoidealsubalg}
\begin{array}{ccc}
K^{\ast\cop} &\rightarrow& K^{\ast\cop}\otimes (K^{\ast\cop}\otimes H^\ast)  \\
k^\ast &\mapsto& \sum k^\ast_{(2)}\otimes(k^\ast_{(1)}\otimes\sigma_l(k^\ast_{(3)})).
\end{array}
\end{equation}
This is because the equations
\begin{eqnarray*}
\big\langle \sum k^\ast_{(2)}\otimes(k^\ast_{(1)}\otimes\sigma_l(k^\ast_{(3)})),
k'\otimes(k\otimes h)\big\rangle
&\overset{(\ref{eqn:indentification})}{=}&
\sum\langle k^\ast_{(2)},k'\rangle\langle k^\ast_{(1)},k\rangle\langle\sigma_l(k^\ast_{(3)}),h\rangle  \\
&=&
\sum\langle k^\ast_{(1)},kk'\rangle\langle k^\ast_{(2)},\sigma_r(h)\rangle  \\
&\overset{\text{Notation}\;\ref{not:sigma lr}}=&
\langle k^\ast,kk'\sigma_r(h)\rangle
\end{eqnarray*}
hold for all $k^\ast\in K^\ast$, $k,k'\in K$ and $h\in H$.

Then due to the notation (\ref{hit action}), we will write
\begin{eqnarray}\label{eqn: hit}
&&(k\otimes h)\rightharpoonup k^\ast
\overset{(\ref{eqn: rightcoidealsubalg})}{=}
\sum k^\ast_{(2)}\big\langle k^\ast_{(1)}\otimes\sigma_l(k^\ast_{(3)}),
  k\otimes h\big\rangle  \nonumber
\overset{(\ref{eqn:indentification})}{=}
\sum k^\ast_{(2)}\langle k^\ast_{(1)},k\rangle\langle\sigma_l(k^\ast_{(3)}),
   h\rangle  \\
&& \;\;\;\;\;\;\;\;\;\;\;\;\;\;\;\;\;\;\;\;\;\;\;\;\;\;\;\;\;\;\;\;\;\;\;\;
\;\;\;\;\;\;\;\;\;\;\;\;\;\;\;\;\;\;\;\;\;\;\;\;\;\;\;\;
(\forall k\in K^\op,\;\forall h\in H,\;\forall k^\ast\in K^{\ast\cop}).
\end{eqnarray}

Now we can proceed to formulate the algebra structure of the left partial dual $K^{\ast\cop}\# H$.
According to Definition \ref{def: padq-Ha}(1), the multiplication is given by: For all $k^\ast, k'^\ast\in K^\ast$ and $h, h'\in H$,
\begin{eqnarray*}
(k^\ast\# h)(k'^\ast\# h')
&\overset{(\ref{eqn:smashprod1})}{=}&
\sum k^\ast\left([\sigma_r(S^{-1}(h_{(3)}))\otimes h_{(1)}]\rightharpoonup k'^\ast\right)\# h_{(2)}h' \\
&\overset{(\ref{eqn: hit})}{=}&
\sum k^\ast k'^\ast_{(2)}\langle k'^\ast_{(1)}, \sigma_r(S^{-1}(h_{(3)}))\rangle\langle \sigma_l(k'^\ast_{(3)}), h_{(1)}\rangle\# h_{(2)}h' \\
&\overset{(\ref{eqn: sigma lr})}{=}&
\sum k^\ast k'^\ast_{(2)}\sigma(k'^\ast_{(1)}, S^{-1}(h_{(3)}))\sigma(k'^\ast_{(3)}, h_{(1)})\# h_{(2)}h' \\
&\overset{(\ref{eqn: sigmabar})}=&
\sum k^\ast k'^\ast_{(2)}\overline{\sigma}(k'^\ast_{(1)}, h_{(3)}) \sigma(k'^\ast_{(3)}, h_{(1)})\# h_{(2)}h'  \\
&=&
\sum \sigma(k'^\ast_{(3)}, h_{(1)})k^\ast k'^\ast_{(2)} \# h_{(2)}h'\overline{\sigma}(k'^\ast_{(1)}, h_{(3)}),
\end{eqnarray*}
which coincides with products (\ref{eqn: multiplication}) in the quantum double $K^{\ast\cop}\otimes H^\ast$.
Besides, the unit element is $\varepsilon\#1$.

On the other hand, note that
$\zeta$ defined in (\ref{zeta}) and $\gamma$ defined in (\ref{gamma}) are clearly both Hopf algebra maps. It follows from Lemma \ref{lem: lpd is Hopf alg} that $K^{\ast\cop}\# H$ is a Hopf algebra, and its coalgebra structure is the tensor product $K^{\ast\cop}\otimes H^\ast$.

Finally, we conclude that $K^{\ast\cop}\# H$ and $K^{\ast\cop}\bowtie_\sigma H$ are the same Hopf algebras.
\end{proof}

In particular, we could obtain the following observation on the Drinfeld double.

\begin{corollary}
The Drinfeld double $D(H)$ of $H$ is a left partially dualized Hopf algebra of $H^\op\otimes H$.
\end{corollary}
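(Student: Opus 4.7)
The plan is to deduce the corollary immediately from Theorem \ref{prop: lpd qd} by specializing the pairing $\sigma$ appropriately. Recall, as noted just after the definition of the quantum double, that when $K=H$ and $\sigma$ is chosen to be the evaluation pairing $\mathrm{ev}: H^{\ast}\otimes H\to\k$, the quantum double $H^{\ast\cop}\bowtie_\sigma H$ specializes exactly to the Drinfeld double $D(H)$. So the goal reduces to verifying that this evaluation pairing fits the hypotheses of Theorem \ref{prop: lpd qd}.

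First, I would check that $\sigma=\mathrm{ev}$ is indeed a Hopf pairing between $H^{\ast}$ and $H$: conditions (i)--(iv) in the definition of a Hopf pairing are just the standard compatibility of the evaluation with the Hopf algebra structures on $H$ and $H^\ast$, so nothing needs to be proved beyond citing (\ref{eqn:indentification}). With $K=H$, we obtain the tensor product Hopf algebra $K^\op\otimes H = H^\op\otimes H$, which is the ambient Hopf algebra in the statement.

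Next, I would invoke Theorem \ref{prop: lpd qd} directly with this data. The theorem produces a partially admissible mapping system $(\zeta,\gamma^\ast)$ for the injection $\iota:H\rightarrowtail H^\op\otimes H$ (via $h\mapsto\sum\sigma_r(S^{-1}(h_{(2)}))\otimes h_{(1)}$) and identifies the associated left partial dual $H^{\ast\cop}\# H$ with $H^{\ast\cop}\bowtie_{\mathrm{ev}}H$. By the remark above, the latter equals $D(H)$, so $D(H)$ is realized as a left partially dualized Hopf algebra of $H^\op\otimes H$.

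The only minor thing worth noting (but not really an obstacle) is the identification of the maps $\sigma_l$ and $\sigma_r$ in this special case: under $\sigma=\mathrm{ev}$ one has $\sigma_l=\id_{H^\ast}$ and $\sigma_r=\id_H$, which simplifies the coaction (\ref{eqn:l Kop tensor H ms}) and action (\ref{eqn:r Kop tensor H ms}) to their classical forms. There is no genuine obstacle; the corollary is a direct specialization of Theorem \ref{prop: lpd qd}.
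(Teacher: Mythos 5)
Your proposal is correct and is exactly the argument the paper intends: the corollary follows by specializing Theorem \ref{prop: lpd qd} to $K=H$ with $\sigma$ the evaluation pairing, under which $K^{\ast\cop}\bowtie_\sigma H$ becomes $D(H)$ and $K^\op\otimes H$ becomes $H^\op\otimes H$. Your additional observation that $\sigma_l=\id_{H^\ast}$ and $\sigma_r=\id_H$ in this case is accurate and harmless.
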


\begin{remark}
This corollary could be regarded as a Hopf algebraic version of \cite[Proposition 2.5]{Ost03}.
\end{remark}

There are two canonical equivalences for the category of representations of left partial duals, which can be found as \cite[Equation (3.13)]{Li23} and Lemma \ref{lem: reconstructionthm} (\cite[Theorem 4.22]{Li23}).
The following two subsections are devoted to describing them when the left partial dual is chosen to be the quantum double $K^{\ast\cop}\bowtie_\sigma H$ in the sense of Theorem \ref{prop: lpd qd}.

\subsection{Tensor equivalences to the category of relative Yetter-Drinfeld modules}

To begin with, let $H$ be a finite-dimensional Hopf algebra, and let $B$ be a left $H$-comodule algebra and $C$ a right $H$-module coalgebra. As usual,
we will use notations for $x^\ast\in C^\ast$ and $v\in V$  that
$$
x^\ast\mapsto\sum x^\ast_{(1)}\otimes x^\ast_{(2)}\in C^\ast\otimes H^\ast\;\;\;\text{and}\;\;\;v\mapsto\sum v_{\langle0\rangle}\otimes v_{\langle1\rangle}\in V\otimes B^\ast
$$
to represent the right $H^\ast$-comodule structure of $C^\ast$ and the right $B^\ast$-comodule structure of $V$, respectively.

Consider the $\k$-linear abelian category ${}_{C^\ast}\M^{B^\ast}$, which consists of finite-dimensional vector spaces $V$ with both a left $C^\ast$-module and a right $B^\ast$-comodule structure, satisfying the compatibility condition
\begin{equation}\label{eqn:lmrccom1}
\sum(x^\ast v)_{\langle0\rangle}\otimes(x^\ast v)_{\langle1\rangle}
=\sum x^\ast_{(1)}v_{\langle0\rangle}\otimes(x^\ast_{(2)}\btr
v_{\langle1\rangle})\;\;\;\;\;\;\;\;(\forall x^\ast\in C^\ast,\;\forall v\in V),
\end{equation}
where $\btr$ denotes the left $H^\ast$-action on $B^\ast$ induced by the left $H$-comodule structure on $B$, namely:
\begin{equation}\label{eqn: btr}
\langle h^\ast\btr b^\ast,b\rangle=\sum\la h^\ast,b_{(1)}\ra\la b^\ast,b_{(2)}\ra
\end{equation}
holds for all $h^\ast\in H$, $b^\ast\in B^\ast$ and $b\in B$.

We remark that the ${}_{C^\ast}\M^{B^\ast}$ is referred as the category of Doi-Hopf modules in \cite{CMZ97, CMIZ99},
and the first canonical equivalence (in fact, isomorphism) is due to \cite[Remark (1.3)(b)]{Doi92}.
\begin{lemma}(\cite[Remark (1.3)(b)]{Doi92})\label{lem: Rep cong}
Let $H$ be a finite-dimensional Hopf algebra, and let $B$ be a left $H$-comodule algebra and $C$ a right $H$-module coalgebra.
Then
$${}^{}_{C^\ast}\M^{B^\ast}\cong\Rep\left(C^\ast\# B\right)$$
as $\k$-linear abelian categories, which sends each $V\in{}^{}_{C^\ast}\M^{B^\ast}$ to the left $C^\ast\# B$-module $V$ with structure defined via
\begin{equation}\label{eqn: l pd mod}
(x^\ast\# b)\cdot v=
\sum x^\ast v_{\langle0\rangle}\langle v_{\langle1\rangle}, b\rangle
\;\;\;\;\;\;\;\;(\forall x^\ast\in C^\ast,\;\forall b\in B,\;\forall v\in V).
\end{equation}
\end{lemma}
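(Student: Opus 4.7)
The plan is to construct mutually inverse $\k$-linear functors between the two categories that act as the identity on the underlying vector spaces and on morphisms, with all structural data transported by the prescribed formulas. The key observation is that $C^\ast\# B$ is generated, as a vector space, by the two subalgebras $C^\ast\#1\cong C^\ast$ and $\e\#B\cong B$, subject to the single commutation relation
$$(\e\#b)(y^\ast\#1) \;=\; \sum (b_{(1)}\rightharpoonup y^\ast)\# b_{(2)}$$
obtained from (\ref{eqn:smashprod1}) by setting $x^\ast=\e$ and $c=1$. Hence a left $C^\ast\#B$-module is the same data as a pair of compatible left $C^\ast$- and left $B$-actions on $V$; and since $V$ is finite-dimensional, the $B$-action is equivalent to a right $B^\ast$-coaction.

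\textbf{Forward direction.} Given $V\in{}_{C^\ast}\M^{B^\ast}$, define the $C^\ast\#B$-action by (\ref{eqn: l pd mod}). The unit axiom $(\e\#1)\cdot v=v$ follows from the counit axiom for the $B^\ast$-coaction, since $\la v_{\la 1\ra}, 1_B\ra = \e_{B^\ast}(v_{\la 1\ra})$, together with $\e\cdot v_{\la 0\ra} = v_{\la 0\ra}$. For multiplicativity, the purely $C^\ast$- and purely $B$-factor cases reduce to the $C^\ast$-module and $B^\ast$-comodule axioms on $V$. The crucial mixed case is
$$\big((\e\#b)(y^\ast\#1)\big)\cdot v \;=\; (\e\#b)\cdot\big((y^\ast\#1)\cdot v\big).$$
Expanding the left side via the commutation relation and (\ref{eqn: l pd mod}) yields $\sum(b_{(1)}\rightharpoonup y^\ast)\,v_{\la 0\ra}\,\la v_{\la 1\ra}, b_{(2)}\ra$, while the right side becomes $\sum(y^\ast v)_{\la 0\ra}\la(y^\ast v)_{\la 1\ra}, b\ra$, which by (\ref{eqn:lmrccom1}) together with the definitions (\ref{eqn: btr}) of $\btr$ and (\ref{hit action}) of $\rightharpoonup$ rewrites to the same expression. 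Thus the compatibility (\ref{eqn:lmrccom1}) is precisely what encodes the mixed relation of $C^\ast\# B$ at the level of modules.

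\textbf{Inverse direction.} Given a left $C^\ast\#B$-module $V$, restrict along the algebra injections $x^\ast\mapsto x^\ast\#1$ and $b\mapsto\e\#b$ to obtain a left $C^\ast$-action and a left $B$-action, and then convert the latter to a right $B^\ast$-coaction via finite-dimensionality. Running the mixed-case computation above in reverse shows that the commutation relation in $C^\ast\#B$ forces the compatibility (\ref{eqn:lmrccom1}) on the restricted structures, so $V$ lies in ${}_{C^\ast}\M^{B^\ast}$. The two constructions are mutually inverse: formula (\ref{eqn: l pd mod}) reconstructs the full $C^\ast\#B$-action unambiguously from the restricted $C^\ast$-action and $B^\ast$-coaction, while restriction recovers this data. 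Morphisms on both sides are simply $\k$-linear maps preserving the structures, so the correspondence is an isomorphism of $\k$-linear abelian categories. The whole argument is largely bookkeeping; the only nontrivial step—the \emph{main obstacle}—is verifying that the single mixed relation $(\e\#b)(y^\ast\#1) = \sum(b_{(1)}\rightharpoonup y^\ast)\#b_{(2)}$ corresponds exactly to the compatibility (\ref{eqn:lmrccom1}), after which everything else is formal.
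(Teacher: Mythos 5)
The paper offers no proof of this lemma at all — it is quoted verbatim from Doi's \emph{Unifying Hopf modules} (Remark (1.3)(b)) — so there is nothing internal to compare against; your write-up supplies the standard argument, and it is correct. You correctly isolate the only substantive point: since $(x^\ast\#1)(\e\#b)=x^\ast\#b$ and the two factors are subalgebras, everything reduces to the mixed relation $(\e\#b)(y^\ast\#1)=\sum(b_{(1)}\rightharpoonup y^\ast)\#b_{(2)}$, whose module-level incarnation is exactly the compatibility (\ref{eqn:lmrccom1}) once one unwinds the definitions of $\btr$ and $\rightharpoonup$ and uses that $V\otimes B^\ast\to\Hom(B,V)$ is injective. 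Two small points worth making explicit if you write this out in full: the claim that the single commutation relation \emph{presents} $C^\ast\#B$ over the two subalgebras is more than you need (and would require its own justification) — it suffices that the multiplication map $C^\ast\otimes B\to C^\ast\# B$, $x^\ast\otimes b\mapsto(x^\ast\#1)(\e\#b)$, is bijective and then to check associativity $((x^\ast\#b)(y^\ast\#c))\cdot v=(x^\ast\#b)\cdot((y^\ast\#c)\cdot v)$ directly, which is a one-line computation combining your three special cases with coassociativity of the $B^\ast$-coaction; and the passage from left $B$-modules to right $B^\ast$-comodules uses finite-dimensionality of $V$ (and, for $B^\ast$ to be a coalgebra at all, of $B$, which holds in the paper's applications where $B$ sits inside $H$).
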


With the help of this lemma, we can establish a tensor isomorphism from $\Rep(K^{\ast\cop}\bowtie_\sigma H)$ in the following proposition.

Recall in Subsection \ref{subsection 2.2} that $\big({}_{K^\ast}\YD^{H^\ast}\big)^\rev$ is the category of (left-right) Yetter-Drinfeld modules with Hopf pairing
$\sigma'$, and it has the tensor product bifunctor defined according to (\ref{eqn: lK* *YD*}) and (\ref{eqn: rH* *YD*}).

\begin{proposition}\label{prop: *YD* cong rep}
Suppose that
\begin{equation}
\begin{array}{ccc}
\xymatrix{
H\ar@<.5ex>[r]^{\iota\;\;\;\;\;\;}
& K^\op\otimes H
 \ar@<.5ex>@{-->}[l]^{\zeta\;\;\;\;\;\;} \ar@<.5ex>[r]^{\;\;\;\;\;\;\pi}
& K^\op \ar@<.5ex>@{-->}[l]^{\;\;\;\;\;\;\gamma}  }
&\;\;\text{and}\;\;&
\xymatrix{
K^{\ast\cop}\ar@<.5ex>[r]^{\pi^\ast\;\;\;\;\;\;}
& K^{\ast\cop}\otimes H^\ast
 \ar@<.5ex>@{-->}[l]^{\gamma^\ast\;\;\;\;\;\;}
 \ar@<.5ex>[r]^{\;\;\;\;\;\;\;\;\iota^\ast}
& H^\ast \ar@<.5ex>@{-->}[l]^{\;\;\;\;\;\;\;\;\zeta^\ast}  }
\end{array}
\end{equation}
is the partially admissible mapping system $(\zeta, \gamma^\ast)$ defined in Lemmas \ref{lem:PAMSleft0} and \ref{lem:PAMSleft}. Then
\begin{equation}\label{eqn: tensor iso}
\Theta:\big({}_{K^\ast}\YD^{H^\ast}\big)^\rev\cong\Rep(K^{\ast\cop}\bowtie_\sigma H)
\end{equation}
as tensor categories, which sends each $V\in\big({}_{K^\ast}\YD^{H^\ast}\big)^\rev$ to the left $K^{\ast\cop}\bowtie_\sigma H$-module $\Theta(V)$ with underlying vector space $V$ and structure defined via
\begin{equation}\label{eqn: l pd mod}
(k^\ast\bowtie h)\cdot v=
\sum k^\ast v_{\langle0\rangle}\langle v_{\langle1\rangle}, h\rangle
\;\;\;\;\;\;\;\;(\forall k^\ast\in K^{\ast\cop},\;\forall h\in H,\;\forall v\in V),
\end{equation}
where $v\mapsto\sum v_{\langle0\rangle}\otimes v_{\langle1\rangle}$ denotes the right $H^\ast$-comodule structure on $V$.
\end{proposition}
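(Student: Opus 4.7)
The plan is to obtain $\Theta$ by specializing the Doi--Hopf equivalence of Lemma~\ref{lem: Rep cong} to the partially admissible mapping system of Lemma~\ref{lem:PAMSleft} (so that $B=H$, $C=K^\op$, and the ambient Hopf algebra is $K^\op\otimes H$), combining this with Theorem~\ref{prop: lpd qd}, and then identifying the resulting category of Doi--Hopf modules ${}_{K^{\ast\cop}}\M^{H^\ast}$ with $({}_{K^\ast}\YD^{H^\ast})^\rev$ both as abelian and as monoidal categories.

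First, specializing Lemma~\ref{lem: Rep cong} directly produces a $\k$-linear abelian equivalence ${}_{K^{\ast\cop}}\M^{H^\ast}\cong\Rep(K^{\ast\cop}\#H)=\Rep(K^{\ast\cop}\bowtie_\sigma H)$, and the module formula it provides is exactly the rule~(\ref{eqn: l pd mod}) defining $\Theta(V)$ in the proposition. It then remains to check that the Doi--Hopf compatibility~(\ref{eqn:lmrccom1}) coincides with the Yetter--Drinfeld axiom~(\ref{eqn: lHmodrKcomod YD}) applied to the transposed pairing $\sigma'(h\otimes k^\ast):=\sigma(k^\ast,h)$, for which $\sigma'_r=\sigma_l$. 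Using the right $(K^{\ast\cop}\otimes H^\ast)$-coaction on $K^{\ast\cop}$ recorded in~(\ref{eqn: rightcoidealsubalg}) together with the $\btr$-action determined by~(\ref{eqn: btr}) from the left $(K^\op\otimes H)$-coaction~(\ref{eqn:l Kop tensor H ms}) on $H$, a direct dualization should yield
\begin{equation*}
\bigl(k^\ast_{(1)}\otimes\sigma_l(k^\ast_{(3)})\bigr)\btr h^\ast
=\sum\sigma_l(k^\ast_{(3)})\,h^\ast\,S^{-1}(\sigma_l(k^\ast_{(1)}))\quad\text{in }H^\ast,
\end{equation*}
so that~(\ref{eqn:lmrccom1}) collapses to~(\ref{eqn: lHmodrKcomod YD}) after replacing $(H,K,\sigma_r)$ by $(K^\ast,H^\ast,\sigma_l)$. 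Hence the two categories coincide and $\Theta$ is a well-defined $\k$-linear isomorphism of underlying abelian categories.

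Next, I would verify that $\Theta$ respects the tensor product once the source is reversed. Expanding the action of $K^{\ast\cop}\bowtie_\sigma H$ on $\Theta(V)\otimes\Theta(W)$ via the coproduct~(\ref{eqn: comultiplication}) and the module rule~(\ref{eqn: l pd mod}) yields
\begin{equation*}
(k^\ast\bowtie h)\cdot(v\otimes w)
=\sum\bigl(k^\ast_{(2)}\cdot v_{\la0\ra}\bigr)\otimes\bigl(k^\ast_{(1)}\cdot w_{\la0\ra}\bigr)\,
\la v_{\la1\ra},h_{(1)}\ra\la w_{\la1\ra},h_{(2)}\ra,
\end{equation*}
which is precisely the image under $\Theta$ of the tensor product in $({}_{K^\ast}\YD^{H^\ast})^\rev$ carrying the reversed $K^\ast$-action~(\ref{eqn: lK* *YD*}) and the right $H^\ast$-coaction~(\ref{eqn: rH* *YD*}). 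Since the unit object on both sides is $\k$ with trivial structure, $\Theta$ is in fact a strict tensor isomorphism.

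The main bookkeeping obstacle lies in the first step: the coaction~(\ref{eqn: rightcoidealsubalg}) on $K^{\ast\cop}$ weaves a transposition (inherited from $\Delta_{K^{\ast\cop}}$) together with an application of $\sigma_l$, so the dualization needed to expose the conjugation-by-$\sigma_l(k^\ast)$ pattern of the Yetter--Drinfeld axiom requires careful use of the identities $\sigma(a,S^{-1}(h))=\sigma(S^{-1}(a),h)$ and $\sigma_l\circ S^{-1}=S^{-1}\circ\sigma_l$. The appearance of $({}_{K^\ast}\YD^{H^\ast})^\rev$ rather than ${}_{K^\ast}\YD^{H^\ast}$ is then forced by the co-opposite coproduct $\Delta_{K^{\ast\cop}}(k^\ast)=\sum k^\ast_{(2)}\otimes k^\ast_{(1)}$ built into the quantum double, which is already visible in~(\ref{eqn: comultiplication}).
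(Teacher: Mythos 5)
Your proposal is correct and follows essentially the same route as the paper's proof: specialize Lemma \ref{lem: Rep cong} to the mapping system of Lemma \ref{lem:PAMSleft} to get the abelian isomorphism with the action formula (\ref{eqn: l pd mod}), compute the induced $\btr$-action on $H^\ast$ (your displayed formula agrees with the paper's $(k^\ast\otimes h^\ast)\btr h'^\ast=\sum h^\ast h'^\ast S^{-1}(\sigma_l(k^\ast))$) to identify ${}_{K^{\ast\cop}}\M^{H^\ast}$ with $\big({}_{K^\ast}\YD^{H^\ast}\big)^\rev$, and then verify that the diagonal $K^{\ast\cop}\bowtie_\sigma H$-action on $\Theta(V)\otimes\Theta(W)$ matches the action on $\Theta(V\otimes W)$ so that the identity is the monoidal structure. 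The ``bookkeeping'' identities you flag are exactly the ones the paper uses in deriving the $\btr$-formula, so no gap remains.
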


\begin{proof}
We start by recalling in Theorem \ref{prop: lpd qd} that $K^{\ast\cop}\bowtie_\sigma H$ is the left partial dualized Hopf algebra $K^{\ast\cop}\#H$ determined by the partially admissible mapping system in (\ref{eqn:admissiblemapsys}). Then it follows by
Lemma \ref{lem: Rep cong} that there is an isomorphism
${}^{}_{K^{\ast\cop}}\M^{H^\ast}\cong\Rep(K^{\ast\cop}\bowtie_\sigma H)$ of $\k$-linear abelian categories.

Now we claim that the category ${}^{}_{K^{\ast\cop}}\M^{H^\ast}$ coincides exactly with $\big({}_{K^\ast}\YD^{H^\ast}\big)^\rev$,
as the compatibility condition (\ref{eqn:lmrccom1}) satisfied for objects in the former category is in fact identical to those in $\big({}_{K^\ast}\YD^{H^\ast}\big)^\rev$.

In order to show this, note that the left $K^{\ast\cop}\otimes H^\ast$-module structure $\btr$ on $H^\ast$ should be induced as
\begin{equation}\label{eqn: btr1}
(k^\ast\otimes h^\ast)\btr h'^\ast=\sum h^\ast h'^\ast S^{-1}(\sigma_l(k^\ast))
\end{equation}
for any $k^\ast\in K^{\ast\cop}$ and $h^\ast, h'^\ast\in H^\ast$,
since the equations
\begin{eqnarray*}
\langle (k^\ast\otimes h^\ast)\btr h'^\ast, h\rangle
&\overset{(\ref{eqn: btr})}=&
\sum\big\la k^\ast\otimes h^\ast,\sigma_r(S^{-1}(h_{(3)})\otimes h_{(1)})\big\ra\big\la h'^\ast, h_{(2)}\big\ra \\
&=&
\sum \big\la k^\ast,\sigma_r(S^{-1}(h_{(3)}))\big\ra\la h^\ast,h_{(1)}\ra\la h'^\ast, h_{(2)}\ra \\
&=&
\sum \big\la k^\ast,\sigma_r(S^{-1}(h_{(2)}))\big\ra\la h^\ast h'^\ast, h_{(1)}\ra \\
&=&
\sum \big\la h^\ast h'^\ast S^{-1}(\sigma_l(k^\ast)), h\big\ra
\end{eqnarray*}
hold for all $h\in H$.

Moreover, suppose $V \in {}^{}_{K^{\ast\cop}}\M^{H^\ast}$, and the compatibility condition (\ref{eqn:lmrccom1}) imply that
\begin{eqnarray*}\label{eqn: lK*modrH*comod of V}
\sum (k^\ast v)_{\la0\ra}\otimes (k^\ast v)_{\la1\ra}
&\overset{(\ref{eqn: rightcoidealsubalg}),\;(\ref{eqn:lmrccom1})}=&
\sum k^\ast_{(2)}v_{\la0\ra}\otimes\big((k^\ast_{(1)}
\otimes\sigma_l(k^\ast_{(3)}))\btr v_{\la1\ra}\big)\\
&\overset{(\ref{eqn: btr1})}=&
\sum k^\ast_{(2)}v_{\la0\ra}\otimes \sigma_l(k^\ast_{(3)})v_{\la1\ra}S^{-1}(\sigma_l(k^\ast_{(1)}))  \\
&=&
\sum k^\ast_{(2)}v_{\la0\ra}\otimes \sigma'_r(k^\ast_{(3)})v_{\la1\ra}S^{-1}(\sigma'_r(k^\ast_{(1)}))
\end{eqnarray*}
for all $k^\ast\in K^{\ast\cop}$ and $v\in V$.
However, it is straightforward to verify that this equality agrees with the defining condition (\ref{eqn:lmrccom1}) for $V$ becoming an object in $\big({}_{K^\ast}\YD^{H^\ast}\big)^\rev$. As a conclusion, the category ${}^{}_{K^{\ast\cop}}\M^{H^\ast}$ is the same as $\big({}_{K^\ast}\YD^{H^\ast}\big)^\rev$, and consequently $\Theta$ (\ref{eqn: tensor iso}) is an isomorphism of $\k$-linear abelian categories.

Let us proceed to show that $\Theta$ is a tensor functor.
It suffices to check that for all $V, W\in \big({}_{K^\ast}\YD^{H^\ast}\big)^\rev$, the identity map
\begin{equation}\label{eqn: id mor}
\id_{V\otimes W}: \Theta(V)\otimes \Theta(W) \cong \Theta(V\otimes W),\;\;
v\otimes w \mapsto v\otimes w
\end{equation}
on $V\otimes W$ is a morphism in $\Rep(K^{\ast\cop}\bowtie_\sigma H)$. Our goal is to show that the $K^{\ast\cop}\bowtie_\sigma H$-module structures on $\Theta(V)\otimes \Theta(W)$ and $\Theta(V\otimes W)$ coincide.

Indeed, recall that $\Theta(V)$ and $\Theta(W)$ should admit left $K^{\ast\cop}\bowtie_\sigma H$-module structures as in (\ref{eqn: l pd mod}). Then the $K^{\ast\cop}\bowtie_\sigma H$-action on their tensor product $\Theta(V)\otimes \Theta(W)$
should be diagonal, namely: For any $k^\ast\in K^{\ast\cop}$, $h\in H$ and $v\in V$, $w\in W$,
\begin{eqnarray}\label{eqn: lpd mod of tp}
(k^\ast\bowtie h)\cdot(v\otimes w)
&=&
\sum\big((k^\ast_{(2)}\bowtie h_{(1)})\cdot v\big)\otimes \big((k^\ast_{(1)}\bowtie h_{(2)})\cdot w\big)
\nonumber  \\
&\overset{(\ref{eqn: l pd mod})}=&
\sum k^\ast_{(2)}v_{\la0\ra}\la v_{\la1\ra}, h_{(1)}\ra\otimes k^\ast_{(1)}w_{\la0\ra}\la w_{\la1\ra}, h_{(2)}\ra.
\end{eqnarray}

On the other hand, for objects $V,W\in\big({}_{K^\ast}\YD^{H^\ast}\big)^\rev$, we know in (\ref{eqn: lK* *YD*}) and (\ref{eqn: rH* *YD*}) that $V\otimes W$ is also an object of $\big({}_{K^\ast}\YD^{H^\ast}\big)^\rev$, where
\begin{equation}\label{eqn: mod stru of tp}
k^\ast\cdot(v\otimes w)=\sum k^\ast_{(2)}v\otimes k^\ast_{(1)}w
\end{equation}
and
\begin{equation}\label{eqn: comod stru of tp}
\sum(v\otimes w)_{\la0\ra}\otimes (v\otimes w)_{\la1\ra}=\sum(v_{\la0\ra}\otimes w_{\la0\ra})\otimes v_{\la1\ra}w_{\la1\ra}
\end{equation}
hold for all $k^\ast\in K^\ast$, $v\in V$ and $w\in W$.
Furthermore, $\Theta(V\otimes W)$ becomes an object in $\Rep(K^{\ast\cop}\bowtie_\sigma H)$ with the action determined by (\ref{eqn: l pd mod}) as
\begin{eqnarray}\label{eqn: lpd mod of tp2}
(k^\ast\bowtie h)\cdot(v\otimes w)
&\overset{(\ref{eqn: l pd mod})}=&
\sum k^\ast\cdot(v\otimes w)_{\la0\ra}\big\la(v\otimes w)_{\la1\ra}, h\big\ra \nonumber \\
&\overset{(\ref{eqn: comod stru of tp})}=&
\sum k^\ast\cdot(v_{\la0\ra}\otimes w_{\la0\ra})\big\la v_{\la1\ra}w_{\la1\ra}, h\big\ra \nonumber \\
&\overset{(\ref{eqn: mod stru of tp})}=&
\sum k^\ast_{(2)}v_{\la0\ra}\la v_{\la1\ra}, h_{(1)}\ra\otimes k^\ast_{(1)}w_{\la0\ra}\la w_{\la1\ra}, h_{(2)}\ra
\end{eqnarray}
for any $k^\ast\in K^{\ast\cop}$, $h\in H$ and $v\in V$, $w\in W$.
Since (\ref{eqn: lpd mod of tp2}) is equal to (\ref{eqn: lpd mod of tp}),
we can conclude that the identity morphism (\ref{eqn: id mor}) is the monoidal structure of $\Theta$, which is consequently a tensor isomorphism.
\end{proof}

Based on Propositions \ref{prop: YD cong *YD*} and \ref{prop: *YD* cong rep}, we can generalize Lemma \ref{lem: HYDH cong D(H)} as follows.

\begin{corollary}\label{cor: YD iso repqd}
There is an isomorphism of tensor categories
\begin{equation}\label{eqn: YD iso repqd}
{}_H\YD^K\cong\Rep(K^{\ast\cop}\bowtie_\sigma H).
\end{equation}
Specifically, for each object $V\in{}_H\YD^K$, the left $K^{\ast\cop}\bowtie_\sigma H$-action on $V$ is defined by
\begin{equation}\label{eqn: YD iso repqd action}
(k^\ast\bowtie h)\cdot v=\sum (h\cdot v)_{\la0\ra} \la k^\ast,(h\cdot v)_{\la1\ra}\ra
\end{equation}
for all $k^\ast\in K^\ast$, $h\in H$ and $v\in V$.
\end{corollary}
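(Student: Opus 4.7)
\emph{Plan.} The strategy is to realize (\ref{eqn: YD iso repqd}) as the composition of the two tensor isomorphisms already established in this subsection, namely
$${}_H\YD^K \;\overset{\text{Prop.~\ref{prop: YD cong *YD*}}}{\cong}\; \big({}_{K^\ast}\YD^{H^\ast}\big)^\rev \;\overset{\Theta,\;\text{Prop.~\ref{prop: *YD* cong rep}}}{\cong}\; \Rep(K^{\ast\cop}\bowtie_\sigma H).$$
Since both factors are tensor isomorphisms, their composite is automatically one, so no further coherence check is needed. All that remains is to chase a general element $v\in V$ through both steps and identify the induced action with (\ref{eqn: YD iso repqd action}).

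First I would take $V\in{}_H\YD^K$ and apply Proposition \ref{prop: YD cong *YD*} to equip $V$ with the left $K^\ast$-action $k^\ast\rightharpoonup v = \sum v_{\la0\ra}\la k^\ast, v_{\la1\ra}\ra$ (using the original right $K$-coaction on $V$) together with the right $H^\ast$-coaction $v\mapsto\sum v^{\la0\ra}\otimes v^{\la1\ra}$ uniquely determined by the requirement $h\cdot v = \sum v^{\la0\ra}\la v^{\la1\ra},h\ra$ for every $h\in H$. Applying $\Theta$ next, formula (\ref{eqn: l pd mod}) gives the left $K^{\ast\cop}\bowtie_\sigma H$-action
$$(k^\ast\bowtie h)\cdot v \;=\; \sum \bigl(k^\ast\rightharpoonup v^{\la0\ra}\bigr)\,\la v^{\la1\ra}, h\ra \;=\; \sum (v^{\la0\ra})_{\la0\ra}\,\la k^\ast,(v^{\la0\ra})_{\la1\ra}\ra\,\la v^{\la1\ra}, h\ra.$$

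The closing step rewrites this expression as (\ref{eqn: YD iso repqd action}). Applying the right $K$-coaction on $V$ to both sides of $h\cdot v = \sum v^{\la0\ra}\la v^{\la1\ra}, h\ra$ yields $\sum(h\cdot v)_{\la0\ra}\otimes(h\cdot v)_{\la1\ra} = \sum(v^{\la0\ra})_{\la0\ra}\otimes(v^{\la0\ra})_{\la1\ra}\la v^{\la1\ra},h\ra$, and pairing the second tensor factor against $k^\ast$ immediately recovers $\sum(h\cdot v)_{\la0\ra}\la k^\ast,(h\cdot v)_{\la1\ra}\ra$, as desired. The only real obstacle here is notational housekeeping: the symbol $v_{\la0\ra}\otimes v_{\la1\ra}$ refers to different coactions on the two sides of Proposition \ref{prop: YD cong *YD*}, so one must consistently separate the original right $K$-coaction on $V\in{}_H\YD^K$ from the right $H^\ast$-coaction installed by the functor (for which I have used $v^{\la0\ra}\otimes v^{\la1\ra}$). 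Once this convention is in place, the verification reduces to a single substitution, and notably no Yetter-Drinfeld compatibility condition is even invoked in the action calculation itself.
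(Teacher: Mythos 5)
Your proposal is correct and is exactly the argument the paper intends: the corollary is stated immediately after the sentence ``Based on Propositions \ref{prop: YD cong *YD*} and \ref{prop: *YD* cong rep}, we can generalize Lemma \ref{lem: HYDH cong D(H)} as follows,'' so the paper's (implicit) proof is precisely the composition of those two tensor isomorphisms. Your element chase deriving (\ref{eqn: YD iso repqd action}) from (\ref{lK*rH* YD}), (\ref{eqn: rH*}) and (\ref{eqn: l pd mod}) is a correct and welcome supplement to what the paper leaves unstated.
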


Similarly, we also have a tensor isomorphism
${}_{K^\ast}\YD^{H^\ast}\cong\Rep(H^\cop\bowtie_{\sigma'} K^\ast)$ as an application of Corollary \ref{cor: YD iso repqd} to the Hopf pairing $\sigma':H\otimes K^\ast\rightarrow\k,$\; $h\otimes k^\ast\mapsto\sigma(k^\ast,h)$.
Note that $\Rep(H^\cop\bowtie_{\sigma'} K^\ast)^\rev$ reconstructs the coopposite Hopf algebra $(H^\cop\bowtie_{\sigma'} K^\ast)^\mathrm{cop}$.

\begin{corollary}
The Hopf algebras
$(H^\cop\bowtie_{\sigma'} K^\ast)^\mathrm{cop}$ and $K^{\ast\cop}\bowtie_\sigma H$ are gauge equivalent.
\end{corollary}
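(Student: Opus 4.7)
The plan is to assemble a chain of tensor isomorphisms connecting $\Rep(K^{\ast\cop}\bowtie_\sigma H)$ with $\Rep\big((H^\cop\bowtie_{\sigma'} K^\ast)^\cop\big)$, and then invoke the standard Tannakian correspondence between gauge equivalence of finite-dimensional Hopf algebras and tensor equivalence of their representation categories compatible with the fiber functors.

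More precisely, I would first apply Corollary \ref{cor: YD iso repqd} to get the tensor isomorphism $\Rep(K^{\ast\cop}\bowtie_\sigma H)\cong {}_H\mathfrak{YD}^K$. Next, Proposition \ref{prop: YD cong *YD*} gives ${}_H\mathfrak{YD}^K\cong \big({}_{K^\ast}\mathfrak{YD}^{H^\ast}\big)^\rev$. Applying Corollary \ref{cor: YD iso repqd} again, now to the Hopf pairing $\sigma':H\otimes K^\ast\rightarrow\k$, yields ${}_{K^\ast}\mathfrak{YD}^{H^\ast}\cong \Rep(H^\cop\bowtie_{\sigma'} K^\ast)$, and consequently
\[
\big({}_{K^\ast}\mathfrak{YD}^{H^\ast}\big)^\rev\cong \Rep(H^\cop\bowtie_{\sigma'} K^\ast)^\rev \;=\; \Rep\big((H^\cop\bowtie_{\sigma'} K^\ast)^\cop\big),
\]
where the last identification uses the fact already noted in the excerpt that reversing the tensor product on a representation category corresponds to passing to the coopposite Hopf algebra.

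Composing these four tensor isomorphisms produces a tensor equivalence
\[
\Rep(K^{\ast\cop}\bowtie_\sigma H)\;\approx\;\Rep\big((H^\cop\bowtie_{\sigma'} K^\ast)^\cop\big).
\]
The key observation, which I would verify by tracking the constructions above, is that each of these isomorphisms acts as the identity on underlying vector spaces: the isomorphism of Corollary \ref{cor: YD iso repqd} just reinterprets a Yetter-Drinfeld module as a module, the isomorphism of Proposition \ref{prop: YD cong *YD*} sends $V$ to the same vector space $V$ with dualized structures, and reversing the tensor product does not touch the underlying functor to $\Vec$. Therefore the composite is an equivalence of tensor categories compatible with the canonical forgetful functors to $\Vec$.

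At this point, the conclusion follows from the reconstruction principle: a tensor equivalence of finite tensor categories of representations that intertwines the forgetful fiber functors is induced by a gauge transformation (twist) between the corresponding Hopf algebras. I expect the main (only) subtle point to be the careful verification that the chain of functors is indeed compatible with the forgetful functors on the nose — once this is done, gauge equivalence of $(H^\cop\bowtie_{\sigma'} K^\ast)^\cop$ and $K^{\ast\cop}\bowtie_\sigma H$ is immediate. No direct computation of a twisting element is required.
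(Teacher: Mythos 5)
Your proposal is correct and follows essentially the same route as the paper: the same chain ${}\Rep(K^{\ast\cop}\bowtie_\sigma H)\cong{}_H\mathfrak{YD}^K\cong\big({}_{K^\ast}\mathfrak{YD}^{H^\ast}\big)^\rev\cong\Rep(H^\cop\bowtie_{\sigma'}K^\ast)^\rev\cong\Rep\big((H^\cop\bowtie_{\sigma'}K^\ast)^\cop\big)$ built from Proposition \ref{prop: YD cong *YD*} and Corollary \ref{cor: YD iso repqd}. The only (cosmetic) difference is the last step: where you track compatibility with the forgetful fiber functors by hand and invoke Tannakian reconstruction, the paper simply cites \cite[Theorem 2.2]{NS08}, which yields gauge equivalence directly from the existence of the $\k$-linear monoidal equivalence.
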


\begin{proof}
It follows from Propositions \ref{prop: YD cong *YD*} and Corollary \ref{cor: YD iso repqd} that
$$\Rep(H^\cop\bowtie_{\sigma'} K^\ast)^\rev\cong \big({}_{K^\ast}\YD^{H^\ast}\big)^\rev\cong{}_H\YD^K\cong\Rep(K^{\ast\cop}\bowtie_\sigma H)$$
as finite tensor categories. The claim holds as a consequence of \cite[Theorem 2.2]{NS08}.
\end{proof}

\subsection{Dual tensor categories from the reconstruction of the quantum double}

Next, by applying Lemma \ref{lem: reconstructionthm}, we obtain the other canonical (tensor) equivalence for the category $\Rep(K^{\ast\cop}\bowtie_\sigma H)$, which is
formalized as the following proposition.
The notion of the \textit{cotensor product} $-\square_C-$ over a coalgebra $C$ would be used, and one might refer to \cite[Section 0]{Tak77} for the definition and basic properties.

\begin{proposition}\label{prop: te lDoi}
Let ${}^{}_{K^{\ast\cop}}\M^{K^{\ast\cop}\otimes H^\ast}_{K^{\ast\cop}}$ denote the finite tensor category of finite-dimensional $K^{\ast\cop}$-$K^{\ast\cop}$-bimodules $M$ equipped with right $K^{\ast\cop}\otimes H^\ast$-comodule structure $m\mapsto\sum m_{(0)}\otimes m_{(1)}$
satisfying that
\begin{equation}\label{eqn: rcomod of lk*mod}
\sum (k^\ast\cdot m)_{(0)}\otimes(k^\ast\cdot m)_{(1)}
=\sum k^\ast_{(2)}\cdot m_{(0)}
 \otimes (k^\ast_{(1)}\otimes\sigma_l(k^\ast_{(3)}))m_{(1)},
\end{equation}
\begin{equation}\label{eqn: rcomod of rk*mod}
\sum (m\cdot k^\ast)_{(0)}\otimes(m\cdot k^\ast)_{(1)}
=\sum m_{(0)}\cdot k^\ast_{(2)}
 \otimes m_{(1)}(k^\ast_{(1)}\otimes\sigma_l(k^\ast_{(3)}))
\end{equation}
for all $k^\ast\in K^{\ast\cop}$ and $m\in M$.
Then there is a tensor equivalence
\begin{equation}\label{eqn:catequ1}
{}^{}_{K^{\ast\cop}}\M^{K^{\ast\cop}\otimes H^\ast}_{K^{\ast\cop}}
\approx\Rep(K^{\ast\cop}\bowtie_\sigma H)
\end{equation}
given by the functors
\begin{equation}\label{eqn:cat func1}
M\mapsto M/M(K^{\ast\cop})^+
\;\;\;\;\;\text{and}\;\;\;\;\;
V\square_{H^\ast}(K^{\ast\cop}\otimes H^\ast)\mapsfrom V.
\end{equation}
\end{proposition}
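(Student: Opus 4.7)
The plan is to invoke the reconstruction theorem for left partial duals (Lemma \ref{lem: reconstructionthm}) directly, applied to the partially admissible mapping system $(\zeta,\gamma^\ast)$ established in Lemmas \ref{lem:PAMSleft0} and \ref{lem:PAMSleft}. By Theorem \ref{prop: lpd qd}, the quantum double $K^{\ast\cop}\bowtie_\sigma H$ is precisely the left partial dual $K^{\ast\cop}\#H$ of $K^\op\otimes H$ determined by this system. Substituting $C^\ast=K^{\ast\cop}$, $B=H$, and ambient Hopf algebra $K^\op\otimes H$ (with dual $K^{\ast\cop}\otimes H^\ast$) into Lemma \ref{lem: reconstructionthm} then immediately produces a tensor equivalence between ${}^{}_{K^{\ast\cop}}\M^{K^{\ast\cop}\otimes H^\ast}_{K^{\ast\cop}}$ and $\Rep(K^{\ast\cop}\bowtie_\sigma H)$, delivered by the functor $\Phi:M\mapsto M/M(K^{\ast\cop})^+$.

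Next I must check that the abstract compatibility conditions (\ref{eqn: rcomod of lx*mod})--(\ref{eqn: rcomod of rx*mod}) specialize to the explicit conditions (\ref{eqn: rcomod of lk*mod})--(\ref{eqn: rcomod of rk*mod}) of the proposition. This is a pure substitution: the right $K^{\ast\cop}\otimes H^\ast$-comodule structure on $K^{\ast\cop}$, induced via dualization from the right $K^\op\otimes H$-module structure $\btl$ on $K^\op$ of Lemma \ref{lem:PAMSleft0}(1), was already computed in the proof of Theorem \ref{prop: lpd qd} (see (\ref{eqn: rightcoidealsubalg})) to be $k^\ast\mapsto\sum k^\ast_{(2)}\otimes(k^\ast_{(1)}\otimes\sigma_l(k^\ast_{(3)}))$. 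Plugging this into the general formulas (\ref{eqn: rcomod of lx*mod})--(\ref{eqn: rcomod of rx*mod}) gives exactly (\ref{eqn: rcomod of lk*mod})--(\ref{eqn: rcomod of rk*mod}), so the two categories are one and the same.

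Finally, for the second arrow in (\ref{eqn:cat func1}), I would exhibit the cotensor product $V\mapsto V\square_{H^\ast}(K^{\ast\cop}\otimes H^\ast)$ (here $H^\ast$ stands for what is $H^\ast$ \emph{in the general framework}, i.e.\ the total Hopf algebra $K^{\ast\cop}\otimes H^\ast$) as a quasi-inverse to $\Phi$. The bimodule structure is transported from $K^{\ast\cop}\otimes H^\ast$ through its left and right $K^{\ast\cop}$-actions, and the right $K^{\ast\cop}\otimes H^\ast$-coaction is the one inherited from the second tensorand. That this functor is quasi-inverse to $\Phi$ is Takeuchi's classical observation (see Remark after Lemma \ref{lem: reconstructionthm}) applied in the present setting, where $K^{\ast\cop}\rightarrowtail K^{\ast\cop}\otimes H^\ast$ is the right coideal subalgebra dual to the surjection $\pi$; the corresponding cotensor functor is a categorical equivalence by the structure theorem for relative Hopf modules, and this equivalence agrees with $\Phi$ on the level of tensor categories.

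The main obstacle is conceptual rather than computational: one must be careful in matching the ``general'' Hopf algebra $H$ of Subsection \ref{subsection 2.2} with the concrete object $K^\op\otimes H$ in the present setting (and, correspondingly, $H^\ast$ in the general framework with $K^{\ast\cop}\otimes H^\ast$), and to keep track of the right $K^{\ast\cop}\otimes H^\ast$-comodule structure on $K^{\ast\cop}$ which is \emph{not} a tensor factor coaction but the twisted one (\ref{eqn: rightcoidealsubalg}). Once this identification is made, all the heavy lifting has been done by Lemma \ref{lem: reconstructionthm} and Theorem \ref{prop: lpd qd}, and the proof reduces to organized bookkeeping.
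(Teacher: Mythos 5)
Your proposal is correct and follows essentially the same route as the paper: both simply specialize Lemma \ref{lem: reconstructionthm} to the partially admissible mapping system of Lemma \ref{lem:PAMSleft} via Theorem \ref{prop: lpd qd}, check that the comodule structure (\ref{eqn: rightcoidealsubalg}) turns the abstract compatibility conditions (\ref{eqn: rcomod of lx*mod})--(\ref{eqn: rcomod of rx*mod}) into (\ref{eqn: rcomod of lk*mod})--(\ref{eqn: rcomod of rk*mod}), and cite Takeuchi for the cotensor quasi-inverse. One small parenthetical slip: in $V\square_{H^\ast}(K^{\ast\cop}\otimes H^\ast)$ the cotensor is taken over $H^\ast=B^\ast$ (the quotient coalgebra of the total dual via $\iota^\ast$), not over the total Hopf algebra $K^{\ast\cop}\otimes H^\ast$, which is instead the second cotensorand.
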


\begin{proof}
Note that the right $K^\op\otimes H$-module coalgebra map $\pi$ (\ref{def: pi}) defines the category ${}^{}_{K^{\ast\cop}}\M^{K^{\ast\cop}\otimes H^\ast}_{K^{\ast\cop}}$ of relative Doi-Hopf modules as introduced before Lemma \ref{lem: reconstructionthm}. Indeed, the compatibility conditions (\ref{eqn: rcomod of lx*mod}) and (\ref{eqn: rcomod of rx*mod}) for each object $M$ will respectively become (\ref{eqn: rcomod of lk*mod}) and (\ref{eqn: rcomod of rk*mod}) in this situation.

Moreover, we know by Theorem \ref{prop: lpd qd} that the quantum double $K^{\ast\cop}\bowtie_\sigma H$ is a left partial dualized Hopf algebra of $K^\op\otimes H$, and our desired equivalences (\ref{eqn:cat func1}) are obtained by Lemma \ref{lem: reconstructionthm} and the functors $\Phi$ and $\Psi$ defined in \cite[Section 1]{Tak79}.
\end{proof}

\begin{remark}
It is clear that $\iota$ (\ref{def: iota}) induces
$\iota^\ast: K^{\ast\cop}\otimes H^\ast\rightarrow H^\ast,\;
k^\ast\otimes h^\ast\mapsto h^\ast S^{-1}(\sigma_l(k^\ast))$, and
we note in the proof of \cite[Lemma 4.9]{Li23} that the left $H^\ast$-comodule structure of $K^{\ast\cop}\otimes H^\ast$ should be considered as $(\iota^\ast\otimes\id)\circ\Delta:$
$$
K^{\ast\cop}\otimes H^\ast\rightarrow H^\ast\otimes(K^{\ast\cop}\otimes H^\ast),\;\;k^\ast\otimes h^\ast\mapsto\sum h^\ast_{(1)}S^{-1}(\sigma_l(k^\ast_{(2)}))\otimes(k^\ast_{(1)}\otimes h^\ast_{(2)}).
$$
Therefore, for each right $H^\ast$-comodule $V$, the cotensor product $V\square_{H^\ast}(K^{\ast\cop}\otimes H^\ast)$ consists of elements $\sum_{i}v_i\otimes(k_i^\ast\otimes h_i^\ast)$ in $V\otimes(K^{\ast\cop}\otimes H^\ast)$ satisfying
\begin{equation}\label{eqn: cotp}
\sum_{i}v_{i\langle0\rangle}\otimes
v_{i\langle1\rangle}\otimes(k_i^\ast\otimes h_i^\ast)=\sum_{i}v_i\otimes {h_i^\ast}_{(1)}S^{-1}(\sigma_l({k_i^\ast}_{(2)}))\otimes
({k_i^\ast}_{(1)}\otimes {h_i^\ast}_{(2)}).
\end{equation}
\end{remark}

In fact, the expression of $V\square_{H^\ast}(K^{\ast\cop}\otimes H^\ast)$ can be simplified. To this end, we show that it is linearly isomorphic to $V\otimes K^\ast$, which is then regarded as an object in ${}^{}_{K^{\ast\cop}}\M^{K^{\ast\cop}\otimes H^\ast}_{K^{\ast\cop}}$.

\begin{lemma}\label{lem:leftcoiso}
For each $V\in\Rep(K^{\ast\cop}\bowtie_\sigma H)$, there is a $\k$-linear isomorphism
\begin{equation}\label{phi}
\phi:
V\square_{H^\ast}(K^{\ast\cop}\otimes H^\ast)\cong V\otimes K^\ast,\;\;\sum_iv_i\otimes(k_i^\ast\otimes h_i^\ast)\mapsto \sum_iv_i\otimes k_i^\ast\la h_i^\ast,1\ra,
\end{equation}
which makes $V\otimes K^\ast\in{}^{}_{K^{\ast\cop}}\M^{K^{\ast\cop}\otimes H^\ast}_{K^{\ast\cop}}$ with structures:
\begin{itemize}
\item[(1)]
The left $K^{\ast\cop}$-action is diagonal and the right $K^{\ast\cop}$-action is defined through the second tensorand $K^\ast$, respectively given by
\begin{equation}\label{mods of VotimesK*}
l^\ast\cdot(v\otimes k^\ast)=\sum l^\ast_{(2)}v\otimes l^\ast_{(1)}k^\ast\;\;\;\;\text{and}\;\;\;\;(v\otimes k^\ast)\cdot l^\ast=\sum v\otimes k^\ast l^\ast
\end{equation}
for any $l^\ast\in K^{\ast\cop}$, $v\in V$ and $k^\ast\in K^\ast$.

\item[(2)]
The right $K^{\ast\cop}\otimes H^\ast$-coaction on $V\otimes K^\ast$ is defined as
\begin{equation}\label{comods of VotimesK*}
v\otimes k^\ast\mapsto\sum (v_{\langle0\rangle}\otimes k^\ast_{(2)})\otimes (k^\ast_{(1)}\otimes v_{\langle1\rangle}\sigma_l(k^\ast_{(3)})),
\end{equation}
where $\sum v_{\langle0\rangle}\otimes v_{\langle1\rangle}\in V\otimes H^\ast$ satisfies Equation (\ref{eqn: l pd mod}).
\end{itemize}
In other words, $\phi$ is regarded as an isomorphism in ${}^{}_{K^{\ast\cop}}\M^{K^{\ast\cop}\otimes H^\ast}_{K^{\ast\cop}}$.
\end{lemma}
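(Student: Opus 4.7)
The plan is to prove the lemma in two stages: first, to establish that $\phi$ is a $\k$-linear bijection by exhibiting an explicit inverse $\psi$; second, to verify that the $K^{\ast\cop}$-bimodule and right $K^{\ast\cop}\otimes H^\ast$-comodule structures inherited by $V\square_{H^\ast}(K^{\ast\cop}\otimes H^\ast)$ are transported through $\phi$ exactly to (\ref{mods of VotimesK*})--(\ref{comods of VotimesK*}), so that $\phi$ becomes an isomorphism in ${}_{K^{\ast\cop}}\M^{K^{\ast\cop}\otimes H^\ast}_{K^{\ast\cop}}$.

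For the bijectivity, I would propose the inverse
$$\psi(v\otimes k^\ast)=\sum v_{\la 0\ra}\otimes\big(k^\ast_{(1)}\otimes v_{\la 1\ra}\sigma_l(S(k^\ast_{(2)}))\big),$$
with the Sweedler indices taken in $K^{\ast\cop}$. The identity $\phi\circ\psi=\id_{V\otimes K^\ast}$ is immediate, since the pairing of $v_{\la 1\ra}\sigma_l(S(k^\ast_{(2)}))$ with $1_H$ yields $\e(v_{\la 1\ra})\e(k^\ast_{(2)})$, and the counit axioms collapse the result to $v\otimes k^\ast$. The crucial check is that $\psi(v\otimes k^\ast)$ satisfies the cotensor condition (\ref{eqn: cotp}): expanding $\Delta_{H^\ast}(v_{\la 1\ra}\sigma_l(S(k^\ast_{(2)})))$ multiplicatively, applying coassociativity of the right $H^\ast$-coaction on $V$, and invoking the Hopf-map identity $S_{H^\ast}\sigma_l=\sigma_l S_{K^\ast}$ together with $\Delta_{H^\ast}S_{H^\ast}=(S_{H^\ast}\otimes S_{H^\ast})\tau\Delta_{H^\ast}$, the $\sigma_l S$-factor in $\psi$ is designed to absorb exactly the twist $S^{-1}(\sigma_l(k^\ast_{(2)}))$ appearing on the right-hand side of (\ref{eqn: cotp}) via the antipode axiom in $H^\ast$. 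Finally, $\psi\circ\phi=\id$ on the cotensor product follows by invoking (\ref{eqn: cotp}) in reverse on a general element $\sum_i v_i\otimes(k_i^\ast\otimes h_i^\ast)$ to transfer the $H^\ast$-piece back to the $V$-coaction side, and then collapsing via the antipode and counit.

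For the structural transport, recall from Proposition \ref{prop: *YD* cong rep} that $V\in\Rep(K^{\ast\cop}\bowtie_\sigma H)$ carries both a left $K^{\ast\cop}$-action and a right $H^\ast$-coaction satisfying the relative Yetter-Drinfeld compatibility. The cotensor product then inherits its object structure in ${}_{K^{\ast\cop}}\M^{K^{\ast\cop}\otimes H^\ast}_{K^{\ast\cop}}$ as follows: the left $K^{\ast\cop}$-action combines the $K^{\ast\cop}$-action on $V$ with left multiplication on the $K^{\ast\cop}$-factor of $K^{\ast\cop}\otimes H^\ast$ diagonally, the right $K^{\ast\cop}$-action is right multiplication on that same factor, and the right $K^{\ast\cop}\otimes H^\ast$-coaction comes from $\Delta_{K^{\ast\cop}\otimes H^\ast}$ applied to the second tensorand. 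Each of these preserves the cotensor subspace by the Yetter-Drinfeld axiom and the bialgebra structure of $K^{\ast\cop}\otimes H^\ast$. Transporting through $\phi$ and simplifying via counit identities produces (\ref{mods of VotimesK*}) for the two actions, while the $\sigma_l S$-twist built into $\psi$ is precisely what reorganises the coaction inherited from $\Delta_{K^{\ast\cop}\otimes H^\ast}$ into the form (\ref{comods of VotimesK*}).

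The principal obstacle will be the cotensor verification of $\psi$'s image: it rests on careful Sweedler-index bookkeeping across $K^\ast$, $K^{\ast\cop}$, and $H^\ast$ simultaneously, to ensure that the $\sigma_l S$-twist in $\psi$ cancels exactly with the $S^{-1}\sigma_l$-twist in (\ref{eqn: cotp}). Once this anchoring identity is confirmed, the remaining structural compatibilities reduce to routine applications of the Yetter-Drinfeld axiom and the counit.
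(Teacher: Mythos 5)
Your overall strategy (exhibit an explicit inverse $\psi$, check that its image satisfies the cotensor condition, then transport the structures through $\phi$) is the same as the paper's, but your candidate inverse is wrong: the antipode in $\sigma_l(S(k^\ast_{(2)}))$ should not be there. The correct inverse is
$$\psi(v\otimes k^\ast)=\sum v_{\langle0\rangle}\otimes\bigl(k^\ast_{(1)}\otimes v_{\langle1\rangle}\,\sigma_l(k^\ast_{(2)})\bigr),$$
with Sweedler indices in $K^\ast$. Applying the cotensor condition (\ref{eqn: cotp}) to this element produces the factor $\sum\sigma_l(k^\ast_{(3)})S^{-1}(\sigma_l(k^\ast_{(2)}))$, which collapses to a counit by $\sum a_{(2)}S^{-1}(a_{(1)})=\e(a)1$: the twist $S^{-1}(\sigma_l(-))$ already built into the left $H^\ast$-coaction on $K^{\ast\cop}\otimes H^\ast$ is absorbed by a \emph{plain} $\sigma_l(-)$, not by $\sigma_l(S(-))$. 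Inserting the extra antipode (whether $S_{K^\ast}$ or $S_{K^{\ast\cop}}=S_{K^\ast}^{-1}$) leaves an uncancelled product of the shape $\sum\sigma_l(S(k^\ast_{(3)}))S^{-1}(\sigma_l(k^\ast_{(2)}))$, which does not collapse, so the image of your $\psi$ does \emph{not} lie in $V\square_{H^\ast}(K^{\ast\cop}\otimes H^\ast)$. One can also see this abstractly: the antipode-free $\psi$ is a two-sided inverse of $\phi$ on the cotensor product, so any map $\psi'$ with $\phi\circ\psi'=\id$ whose image lies in the cotensor product is forced to coincide with it; your formula visibly differs (for $\sigma$ the evaluation and $k^\ast$ grouplike it yields $\sigma_l((k^\ast)^{-1})$ in place of $\sigma_l(k^\ast)$). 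Thus precisely the step you flag as the ``crucial check'' is the one that fails.

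The second half of the argument inherits this defect, since the coaction (\ref{comods of VotimesK*}) carries no antipode and is obtained by pushing the coaction of the cotensor product through $\phi$ using the antipode-free $\psi$. Separately, your description of the left $K^{\ast\cop}$-action on $V\square_{H^\ast}(K^{\ast\cop}\otimes H^\ast)$ as the action on $V$ combined with left multiplication on the $K^{\ast\cop}$-factor is incomplete: $K^{\ast\cop}$ is a right $K^{\ast\cop}\otimes H^\ast$-comodule algebra via (\ref{eqn: rightcoidealsubalg}), so the diagonal action of $l^\ast$ also multiplies $\sigma_l(l^\ast_{(3)})$ into the $H^\ast$-tensorand, as in (\ref{eqn: lK*cop of cotp}); without that term one cannot recover (\ref{mods of VotimesK*}) after applying $\phi$.
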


\begin{proof}
We start by defining a linear map
\begin{equation}\label{psi}
\psi:V\otimes K^\ast\rightarrow V\square_{H^\ast}(K^{\ast\cop}\otimes H^\ast),\;\;
v\otimes k^\ast\mapsto\sum v_{\langle0\rangle}\otimes(k^\ast_{(1)}
\otimes v_{\langle1\rangle}\sigma_l(k^\ast_{(2)})),
\end{equation}
which is well-defined because the image satisfies the condition (\ref{eqn: cotp}), namely:
$$\sum v_{\langle0\rangle}\otimes v_{\langle1\rangle}\otimes (k^\ast_{(1)}\otimes v_{\langle2\rangle}\sigma_l(k^\ast_{(2)}))
=
\sum v_{\langle0\rangle}\otimes v_{\langle1\rangle}\sigma_l(k^\ast_{(3)})S^{-1}
(\sigma_l(k^\ast_{(2)}))\otimes \big(k^\ast_{(1)}\otimes v_{\langle2\rangle}\sigma_l(k^\ast_{(4)})\big).
$$
Furthermore, we can directly find that $\phi\circ\psi=\id$.
Conversely, the equations
\begin{eqnarray*}
\psi\circ\phi\;\big(\sum_{i}v_i\otimes(k^\ast_i\otimes h^\ast_i)\big)
&\overset{(\ref{phi})}=&
\psi(\sum_iv_i\otimes k_i^\ast\la h_i^\ast,1\ra)  \\
&\overset{(\ref{psi})}=&
\sum_{i}v_{i\langle0\rangle}\otimes{k^\ast_i}_{(1)}
\langle h_i^\ast, 1\rangle\otimes v_{i\langle1\rangle}
\sigma_l({k^\ast_i}_{(2)}) \\
&\overset{(\ref{eqn: cotp})}=&
\sum_{i}v_i\otimes{k^\ast_i}_{(1)}\langle{h^\ast_i}_{(2)}, 1\rangle\otimes {h^\ast_i}_{(1)}S^{-1}(\sigma_l({k^\ast_i}_{(3)}))
\sigma_l({k^\ast_i}_{(2)}) \\
&=&
\sum_{i}v_i\otimes (k^\ast_i\otimes h^\ast_i).
\end{eqnarray*}
hold for any element $\sum_{i}v_i\otimes(k^\ast_i\otimes h^\ast_i)\in V\square_{H^\ast}(K^{\ast\cop}\otimes H^\ast)$.
As a consequence, $\psi$ is the inverse of $\phi$, and hence $\phi$ is a linear isomorphism.

However, we know according to \cite[Lemma 4.9]{Li23} that for each $V\in\Rep(K^{\ast\cop}\bowtie_\sigma H)$, the left $K^{\ast\cop}$-action on $V\square_{H^\ast}(K^{\ast\cop}\otimes H^\ast)$ should be diagonal via the right $K^{\ast\cop}\otimes H^\ast$-comodule structure
(\ref{eqn: rightcoidealsubalg}):
\begin{equation}\label{eqn: lK*cop of cotp}
l^\ast\cdot\big[\sum_{i}v_i\otimes(k^\ast_i\otimes h^\ast_i)\big]=\sum l^\ast_{(2)}v_i\otimes(l^\ast_{(1)}
k^\ast_i\otimes\sigma_l(l^\ast_{(3)})h^\ast_i),
\end{equation}
for any $l^\ast\in K^{\ast\cop}$. The right $K^{\ast\cop}$-action and $K^{\ast\cop}\otimes H^\ast$-coaction on $V\square_{H^\ast}(K^{\ast\cop}\otimes H^\ast)$ are given through the second (co)tensorand $K^{\ast\cop}\otimes H^\ast$, respectively:
\begin{equation}\label{eqn: rK*cop of cotp}
\big[\sum_{i}v_i\otimes(k^\ast_i\otimes h^\ast_i)\big]\cdot l^\ast=\sum v_i\otimes k^\ast_il^\ast_{(2)}\otimes h^\ast_i\sigma_l(l^\ast_{(1)})
\end{equation}
via $\pi^\ast:K^{\ast\cop}\rightarrow K^{\ast\cop}\otimes H^\ast,\;
l^\ast\mapsto \sum l^\ast_{(2)}\otimes \sigma_l(l^\ast_{(1)})$
induced by $\pi$ (\ref{def: pi}), as well as
\begin{equation}\label{eqn: rK*copcomod of cotp}
\sum_{i}v_i\otimes(k^\ast_i\otimes h^\ast_i)\mapsto\sum_{i}\big[v_i\otimes({k^\ast_i}_{(2)}\otimes
{h^\ast_i}_{(1)})\big]\otimes({k^\ast_i}_{(1)}\otimes{h^\ast_i}_{(2)}).
\end{equation}

Finally, let us show that $\phi$ transfers the above actions and coaction into (\ref{mods of VotimesK*}) and (\ref{comods of VotimesK*}).
Specifically, for any $l^\ast\in K^{\ast\cop}$ and $\sum_{i}v_i\otimes(k^\ast_i\otimes h^\ast_i)\in V\square_{H^\ast}(K^{\ast\cop}\otimes H^\ast)$, we have calculations
\begin{eqnarray*}
&& \phi\big(l^\ast\cdot\big[\sum_{i}v_i\otimes(k^\ast_i\otimes h^\ast_i)\big]\big)
\overset{(\ref{eqn: lK*cop of cotp})}=
\sum_{i}\phi\big(l^\ast_{(2)}v_i\otimes(l^\ast_{(1)}
k^\ast_i\otimes\sigma_l(l^\ast_{(3)})h^\ast_i)\big)  \\
&~\overset{(\ref{phi})}=~&
\sum_{i} l^\ast_{(2)}v_i\otimes l^\ast_{(1)}
k^\ast_i\la\sigma_l(l^\ast_{(3)})h^\ast_i,1\ra
~=~
\sum_{i}l^\ast_{(2)}v_i\otimes l^\ast_{(1)}k^\ast_i\la h_i^\ast, 1\ra \\
&\overset{(\ref{mods of VotimesK*})}=&
l^\ast\cdot\big(\sum_{i}v_i\otimes k^\ast_i\la h_i^\ast, 1\ra\big)
~\overset{(\ref{phi})}=~
l^\ast\cdot\phi\big(\sum_{i}v_i\otimes(k^\ast_i\otimes h^\ast_i)\big)
\end{eqnarray*}
and
\begin{eqnarray*}
&& \phi\big(\big[\sum_{i}v_i\otimes(k^\ast_i\otimes h^\ast_i)\big]\cdot l^\ast\big)
~\overset{(\ref{eqn: rK*cop of cotp})}=~
\sum_{i}\phi\big(v_i\otimes k^\ast_il^\ast_{(2)}\otimes h^\ast_i\sigma_l(l^\ast_{(1)})\big)  \\
&\overset{(\ref{phi})}=&
\sum_i v_i\otimes k^\ast_il^\ast_{(2)}\la h^\ast_i\sigma_l(l^\ast_{(1)}),1\ra
~=~
\sum_{i}v_i\otimes k^\ast_i l^\ast\la h_i^\ast, 1\ra  \\
&\overset{(\ref{phi})}=&
\phi\big(\sum_{i}v_i\otimes(k^\ast_i\otimes h^\ast_i)\big)\cdot l^\ast.
\end{eqnarray*}
Besides, note that the right $K^{\ast\cop}\otimes H^\ast$-coaction (\ref{comods of VotimesK*}) on the element
$$\phi\big(\sum_iv_i\otimes(k_i^\ast\otimes h_i^\ast)\big)=\sum_iv_i\otimes k_i^\ast\la h_i^\ast,1\ra$$
will become
\begin{eqnarray*}
&&
\sum_{i}v_{i\langle0\rangle}\otimes{k^\ast_i}_{(2)}
\otimes ({k^\ast_i}_{(1)}\otimes v_{i\langle1\rangle}\sigma_l({k^\ast_i}_{(3)}))
\la h_i^\ast, 1\ra  \\
&\overset{(\ref{eqn: cotp})}=&
\sum_i(v_i\otimes{k^\ast_i}_{(2)})\otimes\big({k^\ast_i}_{(1)}\otimes {h_i^\ast}_{(1)}S^{-1}(\sigma_l({k^\ast_i}_{(4)}))
\sigma_l({k^\ast_i}_{(3)})\la {h_i^\ast}_{(2)}, 1\ra\big)  \\
&=&
\sum_i(v_i\otimes {k^\ast_i}_{(2)}) \otimes({k^\ast_i}_{(1)}\otimes{h^\ast_i})
~=~
\sum_i(v_i\otimes {k^\ast_i}_{(2)}\la {h^\ast_i}_{(1)},1\ra) \otimes({k^\ast_i}_{(1)}\otimes{h^\ast_i}_{(2)})  \\
&\overset{(\ref{phi})}=&
(\phi\otimes\id)\Big(\sum_{i}\big[v_i\otimes({k^\ast_i}_{(2)}\otimes
{h^\ast_i}_{(1)})\big]\otimes({k^\ast_i}_{(1)}
\otimes{h^\ast_i}_{(2)})\Big).
\end{eqnarray*}
As a result, the structures (\ref{mods of VotimesK*}) and (\ref{comods of VotimesK*}) make $V\otimes K^\ast$ in ${}^{}_{K^{\ast\cop}}\M^{K^{\ast\cop}\otimes H^\ast}_{K^{\ast\cop}}$ which is isomorphic to $V\square_{H^\ast}(K^{\ast\cop}\otimes H^\ast)$, and the proof is completed.
\end{proof}

With the help of this lemma, the form of the equivalences in Proposition \ref{prop: te lDoi} can be simplified as follows.
\begin{corollary}\label{cor: te lDoi}
There are mutually quasi-inverse equivalences
$$\begin{array}{cccc}
\Phi :& {}^{}_{K^{\ast\cop}}\M^{K^{\ast\cop}\otimes H^\ast}_{K^{\ast\cop}} &\rightarrow& \Rep(K^{\ast\cop}\bowtie_\sigma H),  \\
& M &\mapsto& M/M(K^{\ast\cop})^+
\end{array}$$
and
$$\begin{array}{cccc}
\Psi :& \Rep(K^{\ast\cop}\bowtie_\sigma H) &\rightarrow& {}^{}_{K^{\ast\cop}}\M^{K^{\ast\cop}\otimes H^\ast}_{K^{\ast\cop}},  \\
& V &\mapsto& V\otimes K^\ast
\end{array}$$
of finite tensor categories.
\end{corollary}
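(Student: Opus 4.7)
The plan is to derive the corollary as a cosmetic reformulation of Proposition~\ref{prop: te lDoi}, using the natural isomorphism $\phi$ from Lemma~\ref{lem:leftcoiso} to replace the cotensor-product functor $\widetilde{\Psi}: V \mapsto V \square_{H^\ast}(K^{\ast\cop}\otimes H^\ast)$ by $\Psi: V \mapsto V \otimes K^\ast$. Proposition~\ref{prop: te lDoi} already yields a tensor equivalence between $\Phi$ and $\widetilde{\Psi}$, so only the replacement step demands verification.

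First, I would equip $\Psi$ with a functor structure by setting $\Psi(f) := f \otimes \id_{K^\ast}$ on every morphism $f: V \to W$ in $\Rep(K^{\ast\cop}\bowtie_\sigma H)$, and verify that $\Psi(f)$ is a morphism in ${}^{}_{K^{\ast\cop}}\M^{K^{\ast\cop}\otimes H^\ast}_{K^{\ast\cop}}$. This is a direct check from formulas (\ref{mods of VotimesK*})--(\ref{comods of VotimesK*}): each of the left and right $K^{\ast\cop}$-actions and the right $K^{\ast\cop}\otimes H^\ast$-coaction on $V \otimes K^\ast$ is determined by the right $H^\ast$-coaction $v \mapsto \sum v_{\la0\ra} \otimes v_{\la1\ra}$ on $V$, and $f$ respects this coaction because it is $K^{\ast\cop}\bowtie_\sigma H$-linear via (\ref{eqn: l pd mod}).

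Next, I would check that the family $\phi = (\phi_V)_V$ of Lemma~\ref{lem:leftcoiso} assembles into a natural isomorphism $\phi: \widetilde{\Psi} \Rightarrow \Psi$. Naturality is immediate from (\ref{phi}): for any $f: V \to W$, both composites $\phi_W \circ \widetilde{\Psi}(f)$ and $\Psi(f) \circ \phi_V$ send $\sum_i v_i \otimes (k_i^\ast \otimes h_i^\ast)$ to $\sum_i f(v_i) \otimes k_i^\ast \la h_i^\ast, 1 \ra$. Hence $\Psi \cong \widetilde{\Psi}$ as functors into ${}^{}_{K^{\ast\cop}}\M^{K^{\ast\cop}\otimes H^\ast}_{K^{\ast\cop}}$, so $\Psi$ inherits the property of being a quasi-inverse of $\Phi$, and the monoidal constraint of $\widetilde{\Psi}$ transports along $\phi$ to endow $\Psi$ with the required tensor-functor structure.

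I do not foresee a substantive obstacle: the only content not already supplied is the purely routine functoriality check above, since Lemma~\ref{lem:leftcoiso} has already verified that the prescribed structures on $V \otimes K^\ast$ actually define an object of the target category. The \emph{mildly} technical step is transporting monoidality across $\phi$, but this is formal once $\phi$ is known to be a natural isomorphism of objects in ${}^{}_{K^{\ast\cop}}\M^{K^{\ast\cop}\otimes H^\ast}_{K^{\ast\cop}}$.
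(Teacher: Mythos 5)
Your proposal is correct and takes essentially the same route as the paper: the paper's proof is the single observation that $\Psi$ is naturally isomorphic to the cotensor functor $V\mapsto V\square_{H^\ast}(K^{\ast\cop}\otimes H^\ast)$ of Proposition \ref{prop: te lDoi}, declared ``evident,'' while you supply the routine details (functoriality of $\Psi$, naturality of $\phi$ from Lemma \ref{lem:leftcoiso}, and transport of the monoidal constraint). Nothing in your argument deviates from or adds to the paper's intended reasoning beyond making it explicit.
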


\begin{proof}
It suffices to note that $\Psi$ is naturally isomorphic to the functor $V\mapsto V\square_{H^\ast}(K^{\ast\cop}\otimes H^\ast)$ introduced in Proposition \ref{prop: te lDoi}, but this is evident.
\end{proof}

\subsection{A particular case and the relative center observation}

Recall in Corollary \ref{cor: YD iso repqd} that an isomorphism
\begin{equation}\label{eqn: YD iso repqd2}
{}_H\YD^K\cong\Rep(K^{\ast\cop}\bowtie_\sigma H).
\end{equation}
of finite tensor categories has been provided.
In this subsection, we focus on the situation when the Hopf algebra map $\sigma_r:H\rightarrow K$ is surjective, and explain that ${}_H\YD^K$ (as well as $\Rep(K^{\ast\cop}\bowtie_\sigma H)$) is an example of the relative center of the finite tensor category $\Rep(H)$ (with respect to the tensor subcategory $\Rep(K)$ induced via $\sigma_r)$. In addition, some related results in the literature would be mentioned.

The notion of relative centers could be found for example in \cite[Section 2B]{GNN09}, but we would consider a ``left center version'' in the following definition.

\begin{definition}\label{def:relativecenter}
Let $\C$ and $\D$ be a finite tensor category, and let $\D$ be its tensor subcategory. Their tensor product bifunctors are both denoted by $\otimes$, and the associativity constraints are abbreviated for convenience.
The category $\Z_\D(\C)$ is defined as follows:
\begin{itemize}
\item
The objects are pairs $(X,c)$, where $X\in\C$ and $c$ is a family of natural isomorphisms
$$c_Y:X\otimes Y\xrightarrow{\sim}Y\otimes X\;\;\;\;\;\;\;\;(Y\in\D)$$
satisfying
$$(\id_{Y}\otimes c_{Y'})\circ (\id_{Y}\otimes c_{Y'})
=c_{Y\otimes Y'}$$
for all $Y,Y'\in\D$;
\item
A morphism from $(X,c)$ to $(X',c')$ is a morphism $f\in\Hom_\C(X,X')$ such that
$$(\id_{Y}\otimes f)\circ c_Y=c'_Y\circ(f\otimes \id_{Y})$$
for each $Y\in\D$.
\end{itemize}
It is a finite tensor category with tensor product bifunctor $\otimes$ given as follows: For any $(X,c),(X',c')\in\Z_\D(\C)$, define
$$(X,c)\otimes(X',c'):=(X\otimes X',\tilde{c})$$
where
$\tilde{c}_Y=(c_Y\otimes\id_{X'})\circ(\id_X\otimes c'_Y)$ for all $Y\in\D$.
\end{definition}

\begin{remark}\label{rmk:relativecenter}
\begin{itemize}
\item[(1)]
Clearly, when $\D=\C$, the category $\Z_\D(\C)$ becomes the left center of $\C$ (e.g. \cite{JS91});
\item[(2)]
The ``right center'' version of $\Z_\D(\C)$
would be the category $\overline{\Z}_\D(\C)$ of objects of form
$(X,c^{-1})$ for $(X,c)\in\Z_\D(\C)$, which coincides with the notion introduced in \cite[Section 2B]{GNN09};
\item[(3)]
There is another kind of generalization of $\Z(\C)$ introduced in \cite{Lau20}, which may be applied to our settings if $\D$ is braided.
\end{itemize}
\end{remark}

Under the assumptions in Definition \ref{def:relativecenter},
one may establish a tensor equivalence
\begin{equation}\label{eqn:relativecenter-equiv}
\Z_\D(\C)\approx(\D^\mathrm{rev}\boxtimes\C)_\C^\ast
\end{equation}
as a consequence analogous to \cite[Remark 2.4]{GNN09}.
Here, we do not explain the construction of the functor (\ref{eqn:relativecenter-equiv}) in detail, but remark that the structure of $\C$ as a left module category over the Deligne's tensor product $\D^\mathrm{rev}\boxtimes\C$ should be the bifunctor
\begin{equation}\label{eqn:Delignetensormod}
(\D^\mathrm{rev}\boxtimes\C)\times\C\rightarrow\C,\;\;\;\;
(Y\boxtimes X,X')\mapsto X\otimes X'\otimes Y.
\end{equation}

Note that if $\sigma_r:H\rightarrow K$ is a Hopf algebra surjection, then it induces naturally an injective tensor functor $\Rep(K)\hookrightarrow\Rep(H)$, which makes $\Rep(K)$ be identified as a tensor subcategory of $\Rep(H)$. Now let us establish tensor equivalences from the relative center $\Z_{\Rep(K)}(\Rep(H))$ with the help of Theorem \ref{prop: lpd qd}:

\begin{proposition}\label{prop:relativecenterequiv}
Suppose $\sigma_r:H\rightarrow K$ is a Hopf algebra surjection.
Then there exist tensor equivalences
$$\Z_{\Rep(K)}(\Rep(H))\approx \Rep(K^{\ast\cop}\bowtie_\sigma H)\approx {}_H\YD^K.$$
\end{proposition}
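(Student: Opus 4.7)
The second equivalence $\Rep(K^{\ast\cop}\bowtie_\sigma H)\approx{}_H\YD^K$ is immediate from Corollary \ref{cor: YD iso repqd}, so the plan is to focus on the first one. The strategy will be to realize both $\Z_{\Rep(K)}(\Rep(H))$ and $\Rep(K^{\ast\cop}\bowtie_\sigma H)$ as one and the same dual tensor category of $\Rep(K^\op\otimes H)$ with respect to a common module category $\Rep(H)$. The surjectivity assumption on $\sigma_r$ guarantees that the pullback $\sigma_r^\ast:\Rep(K)\hookrightarrow\Rep(H)$ is fully faithful, so that $\Rep(K)$ genuinely sits as a tensor subcategory of $\Rep(H)$ in the sense required for Definition \ref{def:relativecenter}.

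On the relative-center side, I would first apply the general equivalence (\ref{eqn:relativecenter-equiv}) to rewrite
$$\Z_{\Rep(K)}(\Rep(H))\;\approx\;\bigl(\Rep(K)^{\mathrm{rev}}\boxtimes\Rep(H)\bigr)^{\ast}_{\Rep(H)},$$
and then invoke the tensor equivalence $\Rep(K)^{\mathrm{rev}}\approx\Rep(K^\op)$ induced by the Hopf isomorphism $S_K:K\xrightarrow{\sim} K^{\op\,\cop}$, combined with the standard Deligne identification $\Rep(K^\op)\boxtimes\Rep(H)\approx\Rep(K^\op\otimes H)$, to obtain
$$\Z_{\Rep(K)}(\Rep(H))\;\approx\;\Rep(K^\op\otimes H)^{\ast}_{\Rep(H)}.$$
On the quantum-double side, Theorem \ref{prop: lpd qd} already presents $K^{\ast\cop}\bowtie_\sigma H$ as the left partial dual of $K^\op\otimes H$ attached to the mapping system $(\zeta,\gamma^\ast)$ of Lemma \ref{lem:PAMSleft}, and the reconstruction equivalence (\ref{eqn:Doi}) then yields
$$\Rep(K^{\ast\cop}\bowtie_\sigma H)\;\approx\;\Rep(K^\op\otimes H)^{\ast}_{\Rep(H)},$$
where $\Rep(H)$ carries the $\Rep(K^\op\otimes H)$-module structure prescribed by (\ref{eqn:Rep(H)modRep(B)}) via the coideal subalgebra inclusion $\iota:H\rightarrowtail K^\op\otimes H$.

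The one remaining, and only genuinely non-formal, step will be to verify that these two resulting $\Rep(K^\op\otimes H)$-module-category structures on $\Rep(H)$ coincide. Concretely, I would track the action of $h\in H$ on a decomposable tensor $y\boxtimes x'\otimes v$ through both chains of identifications: the module structure arising from (\ref{eqn:Delignetensormod}) transported across $\Rep(K)^{\mathrm{rev}}\boxtimes\Rep(H)\approx\Rep(K^\op\otimes H)$, versus the one obtained directly from the comodule structure $\rho$ in (\ref{eqn:l Kop tensor H ms}). Using the relation $S_K\circ\sigma_r=\sigma_r\circ S_H$, the expectation is that the antipodal twist $\sigma_r\circ S_H^{-1}$ appearing in $\rho$ is precisely cancelled by the antipode $S_K$ governing the equivalence $\Rep(K)^{\mathrm{rev}}\approx\Rep(K^\op)$, so that both routes produce
$$h\cdot(y\boxtimes x'\otimes v)\;=\;\sum\sigma_r(h_{(3)})\,y\;\boxtimes\;h_{(1)}x'\;\otimes\;h_{(2)}v.$$
This bookkeeping compatibility is the main obstacle in the argument; once it is confirmed, chaining all the equivalences completes the proof, and no further computation involving $K^{\ast\cop}\bowtie_\sigma H$ itself is needed.
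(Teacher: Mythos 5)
Your proposal follows essentially the same route as the paper's own proof: reduce via the general equivalence (\ref{eqn:relativecenter-equiv}) to a dual tensor category, identify $\Rep(K)^{\mathrm{rev}}\boxtimes\Rep(H)$ with $\Rep(K^\op\otimes H)$ through the antipode and the Deligne product, match the two $\Rep(K^\op\otimes H)$-module structures on $\Rep(H)$ (your formula $\sum\sigma_r(h_{(3)})y\boxtimes h_{(1)}x'\otimes h_{(2)}v$ agrees with the paper's $\sum h_{(1)}x\otimes h_{(2)}x'\otimes\sigma_r(S^{-1}(h_{(3)}))y$ after translating the $K^\op$-action back through $S$), and conclude by Theorem \ref{prop: lpd qd} together with the reconstruction equivalence (\ref{eqn:Doi}). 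The compatibility check you flag as the remaining obstacle is exactly the step the paper also treats as a direct verification, so the argument is complete at the same level of detail.
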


\begin{proof}
According to the tensor equivalence (\ref{eqn:relativecenter-equiv}), it suffices to show that
\begin{equation}\label{eqn:QDdualtensorcat}
\big(\Rep(K)^{\mathrm{rev}}\boxtimes\Rep(H)\big)_{\Rep(H)}^\ast\approx
\Rep(K^{\ast\cop}\bowtie_\sigma H)
\end{equation}
as finite tensor categories.

Firstly, the Hopf algebra isomorphism $S^{-1}:K^\cop\cong K^\op$ induces an isomorphism $\Rep(K^\op)\cong\Rep(K^\cop)$ of finite tensor categories. Furthermore, it implies the following tensor equivalences
\begin{equation}\label{eqn:Delignetensorequiv}
\Rep(K^\op\otimes H)\cong \Rep(K^\cop\otimes H)
\approx\Rep(K^\cop)\boxtimes \Rep(H)=\Rep(K)^\mathrm{rev}\boxtimes \Rep(H),
\end{equation}
Specifically, for any $Y\in\Rep(K^\op)$ and $X\in\Rep(H)$,
the composition of (\ref{eqn:Delignetensorequiv}) send the object $Y\otimes X$ to $Y\boxtimes X$, where the structure of $Y\in\Rep(K)^\mathrm{rev}$ should be
$$K\otimes Y\rightarrow Y,\;\;\;\;k\otimes y\mapsto S^{-1}(k)y.$$

Now we recall from Lemma \ref{lem:PAMSleft0} that the left $K^\op\otimes H$-comodule structure $\rho$ on $H$ is given as
\begin{equation*}
\rho: H\rightarrow(K^\op\otimes H)\otimes H,\;\; h\mapsto\sum(\sigma_r(S^{-1}(h_{(3)}))\otimes h_{(1)})\otimes h_{(2)}.
\end{equation*}
Thus it makes $\Rep(H)$ a left $\Rep(K^\op\otimes H)$-module category with module product bifunctor
$\ogreaterthan:\Rep(K^\op\otimes H)\times\Rep(H)\rightarrow \Rep(H)$ defined according to (\ref{eqn:Rep(H)modRep(B)}), and one can verify to obtain a natural isomorphism
$$(Y\otimes X)\ogreaterthan X'
\cong X\otimes X'\otimes Y\;\;\;\;\;\;\;\;
\big(\forall Y\in\Rep(K^\op),\;\forall X,X'\in\Rep(H)\big),$$
where the $H$-action on $X\otimes X'\otimes Y$ is diagonal:
$$H\otimes (X\otimes X'\otimes Y)\rightarrow X\otimes X'\otimes Y,\;\;
h\otimes(x\otimes x'\otimes y)\mapsto \sum h_{(1)}x\otimes h_{(2)}x'\otimes \sigma_r(S^{-1}(h_{(3)}))y.$$

However, it is direct to find that the following diagram
$$\xymatrix{
\Rep(K^\op\otimes H)\times\Rep(H)
\ar[rr]^{(\ref{eqn:Delignetensorequiv})\times\Id_{\Rep(H)}\;\;\;\;\;\;\;\;}
\ar[dr]_{\ogreaterthan}
&& \big(\Rep(K)^\mathrm{rev}\boxtimes \Rep(H)\big)\times\Rep(H)
\ar[dl]^{(\ref{eqn:Delignetensormod})} \\
& \Rep(H)   }$$
of categories and functors commutes (up to natural isomorphisms). Thus
$$\big(\Rep(K)^{\mathrm{rev}}\boxtimes\Rep(H)\big)_{\Rep(H)}^\ast
\approx \Rep(K^\op\otimes H)_{\Rep(H)}^\ast $$
as dual tensor categories. On the other hand, we combine Theorem \ref{prop: lpd qd} and the reconstruction (\ref{eqn:Doi}) of left partial duals to obtain the tensor equivalence
$\Rep(K^\op\otimes H)_{\Rep(H)}^\ast\approx\Rep(K^{\ast\cop}\bowtie_\sigma H)$. It can be concluded that (\ref{eqn:QDdualtensorcat}) holds as tensor categories.
\end{proof}

However, if the tensor functor $\D\rightarrow\C$ is not injective (or $\sigma_r$ is not surjective), we do not know how to generalize the notion $\Z_\D(\C)$ of relative center:

\begin{question}\label{question}
Let $\C$ and $\D$ be finite tensor categories. Suppose $F:\D\rightarrow\C$ is a tensor functor. What is ``relative center'' construction of $\C$ (with respect to $F$) which is tensor equivalent to $(\D^\mathrm{rev}\boxtimes\C)_\C^\ast$?
\end{question}

\section{Further descriptions of certain tensor equivalences}\label{Section 4}

We still use notations $H$, $K$ and $\sigma$ as usual with the beginning of Section \ref{Section3}.

This section investigates the tensor categories of two-sided two-cosided relative Hopf modules, and constructs tensor equivalences from them to
$\Rep(K^{\ast\cop}\bowtie_\sigma H)$, ${}_H\YD^K$ as well as ${}^K\YD_H$ with the help of the results established in Section \ref{Section3}.
Also, we will remark how these results generalize Schauenburg's characterization ${}^H_H\M^H_H\approx {}^H\YD_H$.

\subsection{Tensor categories of two-sided two-cosided relative Hopf modules}\label{subsection 4.1}
Our main purpose in this subsection is to introduce certain equivalences between two tensor categories.

The first category
${}^K_K\M^K_H$ consists of finite-dimensional vector spaces $M$ which are $K$-$H$-bimodules and $K$-$K$-bicomodules satisfying that both comodule structures on $M$ preserve both of its module structures. Specifically, for any $k\in K$, $h\in H$ and $m\in M$, the following compatibility conditions hold:
\begin{equation}\label{eqn: ll condition}
\sum(k\cdot m)^{(-1)}\otimes(k\cdot m)^{(0)}=
\sum k_{(1)}m^{(-1)}\otimes k_{(2)}\cdot m^{(0)},
\end{equation}
\begin{equation}\label{eqn: rl condition}
\sum(m\cdot h)^{(-1)}\otimes (m\cdot h)^{(0)}=
\sum m^{(-1)}\sigma_r(h_{(1)})\otimes m_{(0)}\cdot h_{(2)},
\end{equation}
\begin{equation}\label{eqn: lr condition}
\sum(k\cdot m)^{(0)}\otimes(k\cdot m)^{(1)}=
\sum k_{(1)}\cdot m^{(0)}\otimes k_{(2)}m^{(1)}.
\end{equation}
\begin{equation}\label{eqn: rr condition}
\sum(m\cdot h)^{(0)}\otimes (m\cdot h)^{(1)}=
\sum m^{(0)}\cdot h_{(1)}\otimes m^{(1)}\sigma_r(h_{(2)}).
\end{equation}
Here $m\mapsto\sum m^{(-1)}\otimes m^{(0)}$ and $m\mapsto\sum m^{(0)}\otimes m^{(1)}$ denote respectively the left and right $K$-comodules structures on $M$.

The other category
${}^{K^\ast}_{K^\ast}\M^{ H^\ast}_{K^\ast}$ is defined similarly via the Hopf algebra map $\sigma_l:K^\ast\rightarrow H^\ast$, where the comodule structures on its objects are denoted with superscript parentheses as well. Furthermore, both of them can become tensor categories.

\begin{lemma}\label{lem: tensor cat}
With notations above,
\begin{itemize}
\item[(1)]
${}^K_K\M^ K_H$ is a finite tensor category with tensor product bifunctor $\square_K$ and unit object $K$. Specifically, for any $M,N\in{}^K_K\M^ K_H$,
\begin{itemize}
\item
The left $K$-action and right $H$-action on $M\square_K N$ are diagonal;
\item
The left and right $K$-coactions on $M\square_K N$ are determined at the first and second (co)tensorands respectively.
\end{itemize}

\item[(2)]
${}^{K^\ast}_{K^\ast}\M^{ H^\ast}_{K^\ast}$ is a finite tensor category with tensor product bifunctor $\otimes_{K^\ast}$ and unit object $K^\ast$. Specifically, for any $M, N\in {}^{K^\ast}_{K^\ast}\M^{ H^\ast}_{K^\ast}$,
\begin{itemize}
\item
The left $K^\ast$-coaction and right $H^\ast$-coaction on $M\otimes_{K^\ast} N$ are diagonal;
\item
The left and right $K^\ast$-actions on $M\otimes_{K^\ast} N$ are determined at the first and second tensorands respectively.
\end{itemize}

\end{itemize}
\end{lemma}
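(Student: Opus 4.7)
The plan is to proceed in three stages: verify that the specified tensor product operations are closed on the respective categories, check unit and associativity constraints, and finally address rigidity together with the remaining finite tensor category axioms.

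For part (1), given $M,N \in {}^K_K\M^K_H$, the cotensor product $M\square_K N$ is realized as the usual equalizer inside $M\otimes N$. The main verification is that the diagonal left $K$-action $k\cdot(m\otimes n)=\sum k_{(1)}m\otimes k_{(2)}n$ and diagonal right $H$-action, together with the left $K$-coaction inherited from the first tensorand and the right $K$-coaction inherited from the second, all restrict from $M\otimes N$ to $M\square_K N$ and satisfy the four compatibility conditions (\ref{eqn: ll condition})--(\ref{eqn: rr condition}); this is a routine Sweedler-notation check from the corresponding conditions on $M$ and $N$ separately. The unit $K$ carries its regular bi(co)module structure together with right $H$-action $k\cdot h:=k\sigma_r(h)$; its compatibility conditions follow from coassociativity of $K$ and the bialgebra axioms for $\sigma_r$. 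The canonical isomorphisms $K\square_K M\cong M\cong M\square_K K$ and the associator of $\square_K$ follow from coassociativity of $K$, together with a short check that these isomorphisms preserve all four structures.

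Part (2) is handled by the formally dual argument: $M\otimes_{K^\ast}N$ is the usual tensor product of $K^\ast$-bimodules, and the diagonal left $K^\ast$- and right $H^\ast$-coactions, together with the outer $K^\ast$-actions, are well-defined on it and satisfy the analogues of (\ref{eqn: ll condition})--(\ref{eqn: rr condition}) induced by $\sigma_l$. Unit and associativity are immediate from the standard properties of bimodule tensor products.

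The main obstacle is not the monoidal structure but rigidity and finiteness of these tensor categories, which are not transparent from the bimodule/bicomodule description alone. My plan is to defer these to Subsection 4.2, where I will construct explicit tensor equivalences of both categories with $\Rep(K^{\ast\cop}\bowtie_\sigma H)$ (building on Proposition \ref{prop: te lDoi} and Corollary \ref{cor: YD iso repqd}). Since the latter is manifestly a finite tensor category, rigidity and finiteness will transfer along these equivalences once the tensor structures are shown to match. Thus the present lemma reduces to the compatibility and associativity checks outlined above together with the easy observation that $\square_K$ and $\otimes_{K^\ast}$ define bifunctors, since morphisms in both categories preserve by definition every structure used in the constructions.
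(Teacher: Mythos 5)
Your proposal is correct and follows essentially the same route as the paper: the paper's proof likewise consists of checking that the diagonal actions restrict to $M\square_K N$ (resp.\ that the diagonal coactions descend to $M\otimes_{K^\ast}N$), verifying the four compatibility conditions by Sweedler-notation computations, and invoking Takeuchi's standard results for the unit and associativity isomorphisms of $\square_K$. The paper's proof also does not verify rigidity or finiteness directly, so your explicit plan to transfer these along the equivalences with $\Rep(K^{\ast\cop}\bowtie_\sigma H)$ established later is a reasonable (and arguably more careful) way of completing what the paper leaves implicit.
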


\begin{proof}
\begin{itemize}
\item[(1)]

Firstly, we know by the definition of cotensor products that
\begin{equation}\label{eqn: cotp MN}
M\square_KN=\Big\{\sum_im_i\otimes n_i\in M\otimes N \Bigm|
\sum_im_i^{(0)}\otimes m_i^{(1)}\otimes n_i
=\sum_im_i\otimes n_i^{(-1)}\otimes n_i^{(0)}\Big\}.
\end{equation}
It is direct to show that the right diagonal $H$-action is closed on $M\square_KN$. Namely, for any $h\in H$, we should verify that
$\sum_i m_i\cdot h_{(1)}\otimes n_i\cdot h_{(2)}$ belongs to $M\square_KN$ as follows:
\begin{eqnarray*}
&&\sum_i\big[(m_i\cdot h_{(1)})^{(0)}\otimes(m_i\cdot h_{(1)})^{(1)}\big]\otimes n_i\cdot h_{(2)} \\
&\overset{(\ref{eqn: rr condition})}=&\sum_i\big[(m_i^{(0)}\cdot h_{(1)})\otimes m_i^{(1)} \sigma_r(h_{(2)})\big]\otimes n_i\cdot h_{(3)} \\
&\overset{(\ref{eqn: cotp MN})}=&
\sum_i m_i\cdot h_{(1)} \otimes\big[n_i^{(-1)}\sigma_r(h_{(2)})\otimes n_i^{(0)}\cdot h_{(3)} \big] \\
&\overset{(\ref{eqn: rl condition})}=&
\sum_i m_i\cdot h_{(1)}\otimes\big[(n_i\cdot h_{(2)})^{(-1)}\otimes(n_i\cdot h_{(2)})^{(0)} \big].
\end{eqnarray*}
Similarly, the left diagonal $K$-action is also closed on $M\square_KN$, and one can easily conclude that $M\square_K N$ becomes a $K$-$H$-bimodule via diagonal actions.

On the other hand, it follows from \cite[Introduction]{Tak77} that $M\square_KN$ admits the canonical $K$-$K$-bicomodule structure as claimed. It remains to prove that both comodule structures on $M\square_KN$ preserve both of its module structures. Here we only prove the compatibility (\ref{eqn: rl condition}) between the left $K$-comodule structure and the right $H$-module structure as an example, while others are completely analogous.

For any $h\in H$ and $\sum_i m_i\otimes n_i\in M\square_KN$, the left $K$-coaction on the element
$$\big(\sum_i m_i\otimes n_i\big)\cdot h=\sum_i m_i\cdot h_{(1)}\otimes n_i\cdot h_{(2)}$$
will be
\begin{eqnarray*}
&& \sum_i (m_i\cdot h_{(1)})^{(-1)}\otimes\big[(m_i\cdot h_{(1)})^{(0)}\otimes n_i\cdot h_{(2)}\big] \\
&\overset{(\ref{eqn: rl condition})}=&
\sum_im_i^{(-1)}\sigma_r(h_{(1)})\otimes( m_i^{(0)}\cdot h_{(2)}\otimes n_i\cdot h_{(3)})  \\
&=&
\sum_i\big[(m_i^{(-1)}\otimes m_i^{(0)})\cdot h_{(1)}\big]\otimes (n_i\cdot h_{(2)})  \\
&=&
\big[\sum_i(m_i^{(-1)}\otimes m_i^{(0)})\otimes n_i\big]\cdot h.
\end{eqnarray*}

Finally, it is clear that $K$ is the unit object, and the canonical isomorphisms
$$(M\square_KN)\square_KP\cong M\square_K(N\square_KP)\;\;\;\;\text{and}\;\;\;\;
K\square_KM\cong M\cong M\square_KK$$
can be found in \cite[Section 0]{Tak77}, which are natural in $M,N,P\in {}^K_K\M^ K_H$.

\item[(2)]
At first let us show that the right diagonal $H^\ast$-coaction on $M\otimes_{K^\ast} N$ is well-defined. For the purpose, we need to check that for any $m\in M$, $n\in N$ and $k^\ast\in K^\ast$,
the images of the elements
$$m\cdot k^\ast\otimes_{K^\ast} n\;\;\;\;\text{and}\;\;\;\;
m\otimes_{K^\ast} k^\ast\cdot n$$
are equal under the right diagonal $H^\ast$-comodule structure.
Indeed, we have calculations
\begin{eqnarray*}
&&\sum\big[(m\cdot k^\ast)^{(0)}\otimes_{K^\ast} n^{(0)} \big]\otimes (m\cdot k^\ast)^{(1)}n^{(1)} \\
&=&
\sum\big[(m^{(0)}\cdot k^\ast_{(1)})\otimes_{K^\ast} n^{(0)}\big]\otimes m^{(1)}k^\ast_{(2)}n^{(1)}\\
&=&
\sum\big[m^{(0)}\otimes_{K^\ast} (k^\ast_{(1)}\cdot n^{(0)})\big]\otimes m^{(1)}k^\ast_{(2)}n^{(1)} \\
&=&
\sum\big[m^{(0)}\otimes_{K^\ast} (k^\ast\cdot n)^{(0)} \big]\otimes m^{(1)}(k^\ast\cdot n)^{(1)}.
\end{eqnarray*}

Similar arguments imply that the left diagonal $K^\ast$-coaction on $M\otimes_{K^\ast} N$ is also well-defined. One can finally conclude that
$M\otimes_{K^\ast} N$ is endowed with the $K^\ast$-$K^\ast$-bimodule and $K^\ast$-$H^\ast$-bicomodule structures as desired.

Analogously to the proof of (1), here we verify the compatibility of the right $H^\ast$-comodule structure and the right $K^\ast$-module structure on $M^\ast\otimes_{K^\ast} N$ for instance: For any $m\in M$, $n\in N$ and $k^\ast\in K^\ast$, we have
\begin{eqnarray*}
\sum\big[m^{(0)}\otimes(n\cdot k^\ast)^{(0)}\big]\otimes m^{(1)}(n\cdot k^\ast)^{(1)}&=&\sum (m^{(0)}\otimes_{K^\ast} n^{(0)}\cdot k^\ast_{(1)})\otimes m^{(1)}n^{(1)}\sigma_l(k^\ast_{(2)}) \\
&=&\sum [(m^{(0)}\otimes_{K^\ast}n^{(0)})\otimes m^{(1)}n^{(1)}]\cdot k^\ast.
\end{eqnarray*}

Clearly, the monoidal category ${}^{K^\ast}_{K^\ast}\M^{ H^\ast}_{K^\ast}$ has unit object $K^\ast$ and canonical isomorphisms
$$(M\otimes_{K^\ast}N)\otimes_{K^\ast}P\cong M\otimes_{K^\ast}(N\otimes_{K^\ast}P)\;\;\;\;\text{and}\;\;\;\;
K^\ast\otimes_{K^\ast}M\cong M\cong M\otimes_{K^\ast}K^\ast,$$
which are natural in $M,N,P\in {}^{K^\ast}_{K^\ast}\M^{ H^\ast}_{K^\ast}$.
\end{itemize}
\end{proof}

In fact, the two tensor categories ${}^K_K\M^K_H$ and ${}^{K^\ast}_{K^\ast}\M^{ H^\ast}_{K^\ast}$ are equivalent via the duality functors.
For convenience, we still use $\rightharpoonup$ and $\leftharpoonup$ without confusions, to denote the left $H$-action and right $K$-action on each $M\in{}^{K^\ast}_{K^\ast}\M^{ H^\ast}_{K^\ast}$ (induced respectively by its right $H^\ast$-comodule and left $K^\ast$-comodule structures) as follows:
\begin{equation}\label{eqn: hit action}
h\rightharpoonup m=\sum m^{(0)}\la m^{(1)}, h\ra
\;\;\;\;\text{and}\;\;\;\;
m\leftharpoonup k=\sum\la m^{(-1)}, k\ra m^{(0)}
\end{equation}
for any $k\in K$, $h\in H$ and $m\in M$.

\begin{proposition}\label{prop: four* cong four}
There is a contravariant equivalence
\begin{equation}\label{eqn: four* cong four}
{}^{K^\ast}_{K^\ast}\M^{ H^\ast}_{K^\ast}\approx{}^K_K\M^K_H,\;\;\;\;M\mapsto M^\ast
\end{equation}
between finite tensor categories introduced in Lemma \ref{lem: tensor cat}, with monoidal structure 
\begin{equation}\label{J_{M, N}}
J_{M,N}:M^\ast\square_K N^\ast \rightarrow (M\otimes_{K^\ast}N)^\ast,
\;\;\;\;\sum_i m_i^\ast\otimes n_i^\ast\mapsto\sum_i\big\langle m_i^\ast, -\big\rangle\big\langle n_i^\ast, -\big\rangle,
\end{equation}
and the quasi-inverse
\begin{equation}\label{eqn: four* cong four inverse}
P^\ast\mapsfrom P.
\end{equation}
\end{proposition}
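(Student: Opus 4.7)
The plan is to establish the contravariant equivalence by interpreting linear duality $M \mapsto M^\ast$ as the standard ``module-comodule swap'' for finite-dimensional spaces, and then to verify the monoidal structure via the canonical isomorphism $M^\ast \otimes N^\ast \cong (M \otimes N)^\ast$.

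First, I will transfer the four structures on $M \in {}^{K^\ast}_{K^\ast}\M^{H^\ast}_{K^\ast}$ one at a time. The left $K^\ast$-module on $M$ dualizes to a right $K^\ast$-module on $M^\ast$, equivalently a left $K$-comodule $M^\ast \to K \otimes M^\ast$; symmetrically, the right $K^\ast$-module on $M$ produces a left $K^\ast$-module on $M^\ast$, equivalently a right $K$-comodule. For the comodule structures, the left $K^\ast$-comodule on $M$ is equivalent to a right $K$-module on $M$, which dualizes to a left $K$-module on $M^\ast$, while the right $H^\ast$-comodule on $M$ becomes a left $H$-module on $M$ and hence a right $H$-module on $M^\ast$. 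Once these four structures on $M^\ast$ are written out in terms of the hit-action convention (\ref{eqn: hit action}), each of the compatibility axioms (\ref{eqn: ll condition})--(\ref{eqn: rr condition}) that define ${}^K_K\M^K_H$ follows from the dual compatibility condition for ${}^{K^\ast}_{K^\ast}\M^{H^\ast}_{K^\ast}$ by pairing both sides against arbitrary elements of $M$, $K$, and $H$ and invoking $\sigma_l = \sigma_r^\ast$ (Notation \ref{not:sigma lr}).

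Since $M \mapsto M^\ast$ sends morphisms to their transposes, functoriality is automatic, and the natural evaluation isomorphism $M \cong M^{\ast\ast}$ for finite-dimensional $M$ gives that (\ref{eqn: four* cong four inverse}) provides a quasi-inverse. It remains to install the monoidal datum. The key claim is that the canonical isomorphism $\tau\colon M^\ast \otimes N^\ast \xrightarrow{\sim} (M \otimes N)^\ast$ restricts to an isomorphism $J_{M,N}\colon M^\ast \square_K N^\ast \cong (M \otimes_{K^\ast} N)^\ast$. The dual of the coequalizer $M \otimes N \twoheadrightarrow M \otimes_{K^\ast} N$ identifies $(M \otimes_{K^\ast} N)^\ast$ with the subspace of $(M \otimes N)^\ast$ consisting of $f$ with $f(m \cdot k^\ast \otimes n) = f(m \otimes k^\ast \cdot n)$ for all $m, n, k^\ast$. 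Writing $f = \sum_i m_i^\ast \otimes n_i^\ast$ via $\tau^{-1}$, and expressing the $K^\ast$-actions on $M$ and $N$ in terms of the dual $K$-coactions on $M^\ast$ and $N^\ast$ produced in the first paragraph, this balance condition becomes
\[
\sum_i (m_i^\ast)^{(0)} \otimes (m_i^\ast)^{(1)} \otimes n_i^\ast
 = \sum_i m_i^\ast \otimes (n_i^\ast)^{(-1)} \otimes (n_i^\ast)^{(0)},
\]
which is exactly the defining condition of $M^\ast \square_K N^\ast$. Hence $J_{M,N}$ is a well-defined $\k$-linear isomorphism.

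To finish, one checks that $J_{M,N}$ is a morphism in ${}^K_K\M^K_H$: the diagonal left $K$- and right $H$-module structures on both sides, and the ``outer-tensorand'' left/right $K$-comodule structures on $M^\ast \square_K N^\ast$ from Lemma \ref{lem: tensor cat}(1), correspond precisely under $\tau$ to the dualized diagonal and outer structures on $(M \otimes_{K^\ast} N)^\ast$ coming from Lemma \ref{lem: tensor cat}(2). The pentagon and unit axioms reduce to the classical coherence of $\tau$ for iterated dual tensor products, together with the trivial identifications $K^\ast \otimes_{K^\ast} M \cong M \cong M \otimes_{K^\ast} K^\ast$ and $K \square_K N \cong N \cong N \square_K K$. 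The main technical hurdle is packaging the four parallel dualizations in a notationally clean way and keeping the orientations (left versus right, module versus comodule) consistent when translating between ${}^{K^\ast}_{K^\ast}\M^{H^\ast}_{K^\ast}$ and ${}^K_K\M^K_H$; once the hit-action formalism is set up, the remaining verifications are essentially mechanical.
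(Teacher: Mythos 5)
Your proposal is correct and follows essentially the same route as the paper's proof: the four structures on $M^\ast$ are installed by the same duality/hit-action dictionary, the compatibility axioms are checked by pairing against arbitrary elements (using $\sigma_l=\sigma_r^\ast$), and $J_{M,N}$ is obtained by identifying the dual of the coequalizer defining $\otimes_{K^\ast}$ with the equalizer defining $\square_K$. The only difference is one of completeness rather than method — the paper writes out one representative compatibility computation, the well-definedness of $J_{M,N}$ on balanced tensors, and one structure-preservation check explicitly, whereas you assert these as mechanical; they are indeed routine once set up as you describe.
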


\begin{proof}
At first
for each $M\in{}^{K^\ast}_{K^\ast}\M^{ H^\ast}_{K^\ast}$, we set $M^\ast$ as the object in ${}^K_K\M^ K_H$ with four structures induced canonically as follows:
The left $K$-action and the right $H$-action on $M^\ast$ are respectively given by
\begin{equation}\label{eqn: mod M*}
 k\cdot m^\ast=\langle m^\ast, (-)\leftharpoonup k\rangle\;\;\;\;\text{and}\;\;\;\;m^\ast\cdot h=\langle m^\ast h\rightharpoonup(-)\rangle,
\end{equation}
for any $k\in K$, $h\in H$ and $m^\ast\in M^\ast$,
which make $M^\ast$ a $K$-$H$-bimodule.
On the other hand,
the left and right $K$-coactions
\begin{equation}\label{eqn: comod M*}
m^\ast\mapsto\sum m^{\ast(-1)}\otimes m^{\ast(0)}\;\;\;\text{and}\;\;\;
m^\ast\mapsto\sum m^{\ast(0)}\otimes m^{\ast(1)}
\end{equation}
on $M^\ast$ are determined such that the equations
\begin{equation}\label{eqn: lrcomod M* relation}
\sum\langle k^\ast, m^{\ast(-1)}\rangle\langle m^{\ast(0)}, m\rangle=\langle m^\ast, k^\ast\cdot m\rangle,
\;\;\;\;
\sum\langle m^{\ast(0)}, m\rangle\langle k^\ast, m^{\ast(1)}\rangle=\langle m^\ast, m\cdot k^\ast\rangle
\end{equation}
hold for any $k^\ast\in K^\ast$ and $m\in M$. It is clear that
these coactions equip $M^\ast$ with structure of a $K$-$K$-bicomodule.

Here we only verify that the left $K$-comodule structure and the right $H$-module structure of $M^\ast$ satisfy the compatibility conditions (\ref{eqn: rl condition}) in the category ${}^K_K\M^ K_H$ as an example. In order to show that
$$\sum(m^\ast\cdot h)^{(-1)}\otimes(m^\ast\cdot h)^{(0)}
=\sum m^{\ast(-1)}\sigma_r(h_{(1)})\otimes m^{\ast(0)}\cdot h_{(2)}$$
holds
for each $h\in H$ and $m^\ast\in M^\ast$, we compare the images of both sides under any $k^\ast\otimes m$ ($k^\ast\in K^\ast,\;m\in M$) by following calculations:
\begin{eqnarray*}
&&
\sum\langle k^\ast, (m^\ast\cdot h)^{(-1)}\rangle\langle (m^\ast\cdot h)^{(0)}, m\rangle
\overset{(\ref{eqn: lrcomod M* relation})}=
\la m^\ast\cdot h, k^\ast\cdot m\ra \overset{(\ref{eqn: mod M*})}=
\langle m^\ast, h\rightharpoonup (k^\ast\cdot m)\rangle  \\
&\overset{(\ref{eqn: hit action})}=&
\sum\la m^\ast,(k^\ast\cdot m)^{(0)}\ra\la(k^\ast\cdot m)^{(1)},h\ra
=
\sum\langle m^\ast, k^\ast_{(1)}\cdot m_{(0)}\rangle\langle\sigma_l(k^\ast_{(2)})m_{(1)}, h\rangle \\
&\overset{(\ref{eqn: lrcomod M* relation})}=&
\sum\langle k^\ast_{(1)}, m^{\ast(-1)}\rangle \langle m^{\ast(0)}, m_{(0)}\rangle\langle \sigma_l(k^\ast_{(2)}), h_{(1)}\rangle\langle m_{(1)}, h_{(2)}\rangle \\
&=&
\sum\langle k^\ast_{(1)}, m^{\ast(-1)}\rangle \langle k^\ast_{(2)}, \sigma_r(h_{(1)})\rangle\langle m^{\ast(0)}, m_{(0)}\rangle\langle m_{(1)}, h_{(2)}\rangle \\
&\overset{(\ref{eqn: hit action})}=&
\sum\langle k^\ast, m^{\ast(-1)}\sigma_r(h_{(1)})\rangle\langle m^{\ast(0)}, h_{(2)}\rightharpoonup m\rangle \\
&\overset{(\ref{eqn: mod M*})}=&
\sum\langle k^\ast, m^{\ast(-1)}\sigma_r(h_{(1)})\rangle\langle m^{\ast(0)}\cdot h_{(2)}, m\rangle,
\end{eqnarray*}
where the forth equality is due to
$\sum(k^\ast\cdot m)^{(0)}\otimes(k^\ast\cdot m)^{(1)}
=\sum (k^\ast_{(1)}\cdot m_{(0)})\otimes\sigma_l(k^\ast_{(2)})m_{(1)}$ according to the assumption $M\in{}^{K^\ast}_{K^\ast}\M^{ H^\ast}_{K^\ast}$.
Moreover, we conclude by analogous processes that $M^\ast$ is an object in ${}^K_K\M^ K_H$, and hence $M\mapsto M^\ast$ is a well-defined functor with quasi-inverse $P^\ast\mapsfrom P$ evidently.

Next, we try to prove that $J$ (\ref{J_{M, N}}) is a well-defined natural isomorphism. To this end, it should be verified that
$\sum_i\big\langle m_i^\ast, -\big\rangle\big\langle n_i^\ast, -\big\rangle$ should be a well-defined function on $M\otimes_{K^\ast}N$ for each
$\sum_i m_i^\ast\otimes n_i^\ast\in M\square_KN$,
which is due to following calculations: For any $k^\ast\in K^\ast$, $m\in M$ and $n\in N$,
\begin{eqnarray*}
\big\la J_{M, N}\big(\sum_i m_i^\ast\otimes n_i^\ast\big),
m\cdot k^\ast\otimes_{K^\ast} n\big\ra
&=&
\sum_i\la m_i^\ast, m\cdot k^\ast\ra\la n_i^\ast, n\ra \\
&\overset{(\ref{eqn: lrcomod M* relation})}=&
\sum_i\la m_i^{\ast(0)}, m\ra\la k^\ast, m_i^{\ast(1)}\ra\la n_i^\ast, n\ra \\
&\overset{(\ref{eqn: cotp MN})}=&
\sum_i\la m_i^\ast, m\ra\la k^\ast, n_i^{\ast(-1)}\ra\la n_i^{\ast(0)}, n\ra \\
&\overset{(\ref{eqn: lrcomod M* relation})}=&
\sum_i\la m_i^\ast, m\ra\la n_i^\ast, k^\ast\cdot n\ra \\
&=&
\big\la J_{M, N}\big(\sum_i m_i^\ast\otimes n_i^\ast\big),m\otimes_{K^\ast} k^\ast\cdot n\big\ra.
\end{eqnarray*}

In fact, it is known that $M\otimes_{K^\ast}N$ is the coequalizer of the diagram
$$\xymatrix{
M\otimes K^\ast\otimes N
\ar@<-0.5ex>[rr]_{\;\;\;\;\id_{M}\otimes \mu_N}  \ar@<.5ex>[rr]^{\;\;\;\;\nu_M\otimes\id_{N}}
&& \;M\otimes N\; \ar@{->>}[r]
& M\otimes_{K^\ast}N  },$$
where $\mu$ and $\nu$ denote respectively the left $K^\ast$-module and right $K^\ast$-module structures of objects in ${}^{K^\ast}_{K^\ast}\M^{ H^\ast}_{K^\ast}$.
Then it is sent by the exact functor $(-)^\ast$ to the diagram
$$\xymatrix{
(M\otimes_{K^\ast}N)^\ast\;\; \ar@{>->}[r] & \;M^\ast\otimes N^\ast\;
\ar@<-0.5ex>[rr]_{\id_{M^\ast}\otimes\mu_N^\ast\;\;\;\;}  \ar@<.5ex>[rr]^{\nu_M^\ast\otimes\id_{N^\ast}\;\;\;\;}
&& \;M^\ast\otimes K\otimes N^\ast   }.$$
However, one can find that $\mu_N^\ast$ and $\nu_M^\ast$ coincide with the induced $K$-module structures of $M^\ast$ and $N^\ast$ respectively.
Thus according to the definition of cotensor product in \cite[Section 0]{Tak77}, it is clear that $M^\ast\square_KN^\ast\cong(M\otimes_{K^\ast}N)^\ast$ as the equalizer of the diagram above, and this isomorphism is exactly $J_{M,N}$.

Besides, $J_{M,N}$ is evidently natural in $M,N\in {}^{K^\ast}_{K^\ast}\M^{ H^\ast}_{K^\ast}$, and now we need to show that it is a morphism in ${}^K_K\M^ K_H$. Let us verify that $J_{M, N}$ preserves left $K$-actions for instance, while others are completely analogous as well: For any $m\in M$, $n\in N$ and $k\in K$ that
\begin{eqnarray*}
&& \Big\la J_{M, N}\Big(\sum_ik\cdot (m_i^\ast\otimes n_i^\ast)\Big), m\otimes_{K^\ast}n\Big\ra   \\
&=&
\big\la J_{M, N}\big(\sum_ik_{(1)}\cdot m_i^\ast\otimes k_{(2)}\cdot n_i^\ast\big), m\otimes_{K^\ast}n\big\ra \\
&\overset{(\ref{J_{M, N}})}=&
\sum_i\langle k_{(1)}\cdot m_i^\ast, m\rangle\langle k_{(2)}\cdot n_i^\ast, n\rangle
\overset{(\ref{eqn: mod M*})}=
\sum_i\langle m_i^\ast, m\leftharpoonup k_{(1)}\rangle\langle n_i^\ast, n\leftharpoonup k_{(2)}\rangle \\
&\overset{(\ref{eqn: hit action})}=&
\sum_i\langle m^{(-1)}, k_{(1)}\rangle\langle m_i^\ast, m^{(0)}\rangle\langle n^{(-1)}, k_{(2)}\rangle\langle n_i^\ast, n^{(0)}\rangle \\
&\overset{(\ref{J_{M, N}})}=&
\sum_i\langle m^{(-1)}n^{(-1)}, k\rangle\big\langle J_{M, N}\big(\sum_i m_i^\ast\otimes n_i^\ast\big), m^{(0)}\otimes_{K^\ast} n^{(0)}\big\rangle \\
&\overset{(\ref{eqn: hit action})}=&
\sum_i\big\langle J_{M, N}\big(\sum_i m_i^\ast\otimes n_i^\ast\big), (m\otimes_{K^\ast} n)\leftharpoonup k\big\rangle \\
&\overset{(\ref{eqn: mod M*})}=&
\big\la k\cdot J_{M, N}\big(\sum_i m_i^\ast\otimes n_i^\ast\big), m\otimes_{K^\ast} n\big\ra,
\end{eqnarray*}
where the penultimate equality is because the left $K^\ast$-coaction on $M\otimes_{K^\ast}N$ is diagonal.

Finally, it suffices to show the equation
\begin{equation}\label{eqn: JJ=JJ}
J_{M\otimes_{K^\ast}N,P}\circ(J_{M,N}\otimes\id_{P^\ast})
=J_{M,N\otimes_{K^\ast}P}\circ(\id_{M^\ast}\otimes J_{N,P})
\end{equation}
holds for any $M,N,P\in{}^{K^\ast}_{K^\ast}\M^{ H^\ast}_{K^\ast}$. This is because the images of every element $\sum_i m_i^\ast\otimes n_i^\ast\otimes p_i^\ast\in M^\ast\square_KN^\ast\square_KP^\ast$ under the left and right sides of (\ref{eqn: JJ=JJ}) are both calculated to be
$\sum_i \la m_i^\ast,-\ra\la n_i^\ast,-\ra\la p_i^\ast,-\ra$.
\end{proof}

\begin{remark}\label{rmk: four cong four*}
We describe the quasi-inverse (\ref{eqn: quasi-inverse}) in details for subsequent uses.

For each $P\in{}^K_K\M^ K_H$, we set $P^\ast\in{}^{K^\ast}_{K^\ast}\M^{ H^\ast}_{K^\ast}$ with four structures also induced canonically as follows:
The left and right $K^\ast$-actions are respectively given by
\begin{equation}\label{eqn: mod P*}
 k^\ast\cdot p^\ast=\langle p^\ast, (-)\leftharpoonup k^\ast\rangle
 \;\;\;\;\text{and}\;\;\;\;
 p^\ast\cdot k^\ast=\langle p^\ast, k^\ast\rightharpoonup(-)\rangle
\end{equation}
for any $k^\ast\in K^\ast$ and $p^\ast\in P^\ast$.
On the other hand,
the left $K^\ast$-coaction and right $H^\ast$-coaction
\begin{equation}\label{eqn: comod P*}
p^\ast\mapsto\sum p^{\ast(-1)}\otimes p^{\ast(0)}\;\;\;\text{and}\;\;\;
p^\ast\mapsto\sum p^{\ast(0)}\otimes p^{\ast(1)},
\end{equation}
are determined such that the equations
\begin{equation}\label{eqn: lrcomod P* relation}
\sum\langle p^{\ast(-1)},k\rangle\langle p^{\ast(0)}, p\rangle=\langle p^\ast, k\cdot p\rangle,
\;\;\;\;
\sum\langle p^{\ast(0)}, p\rangle\langle p^{\ast(1)},h\rangle=\langle p^\ast, p\cdot h\rangle
\end{equation}
hold for any $k\in K$, $h\in H$ and $p\in P$.
\end{remark}

\subsection{Tensor equivalences between the various categories}
In this subsection, we apply the results of Section \ref{Section3} to provide further tensor equivalences of the categories mentioned in the previous sections.

Note in Proposition \ref{prop: te lDoi} that ${}^{}_{K^{\ast\cop}}\M^{K^{\ast\cop}\otimes H^\ast}_{K^{\ast\cop}}$ is a finite tensor category, whose structure is defined according to \cite[Proposition 4.7]{Li23}. Specifically,
for any $M,N\in{}^{}_{K^{\ast\cop}}\M^{K^{\ast\cop}\otimes H^\ast}_{K^{\ast\cop}}$, their tensor product object $M\otimes_{K^{\ast\cop}} N$ has structures as follows:
\begin{itemize}
\item
The left and right $K^{\ast\cop}$-actions are determined at the first and second tensorands respectively;

\item
The right $K^{\ast\cop}\otimes H^\ast$-coaction is diagonal:
\begin{equation}
\begin{array}{ccc}
M\otimes_{K^{\ast\cop}} N & \rightarrow &
\big(M\otimes_{K^{\ast\cop}} N\big)\otimes (K^{\ast\cop}\otimes H^\ast), \\
m\otimes_{K^{\ast\cop}} n &\mapsto&
\sum(m_{(0)}\otimes_{K^{\ast\cop}} n_{(0)})\otimes m_{(1)}n_{(1)}
\end{array}
\end{equation}
\end{itemize}

On the other hand, recall in Lemma \ref{lem: tensor cat} we have established the structures of  ${}^{K^\ast}_{K^\ast}\M^{ H^\ast}_{K^\ast}$  as a finite tensor category, which is indeed isomorphic to the previous one.

\begin{lemma}\label{lem: four* cong three}
There is an isomorphism of finite tensor categories
\begin{equation}\label{eqn: four* cong three}
{}^{K^\ast}_{K^\ast}\M^{ H^\ast}_{K^\ast}\cong{}^{}_{K^{\ast\cop}}\M^{K^{\ast\cop}\otimes H^\ast}_{K^{\ast\cop}}.
\end{equation}
\end{lemma}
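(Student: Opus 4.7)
The plan is to show that ${}^{K^\ast}_{K^\ast}\M^{H^\ast}_{K^\ast}$ and ${}^{}_{K^{\ast\cop}}\M^{K^{\ast\cop}\otimes H^\ast}_{K^{\ast\cop}}$ are literally the same category once one recognizes the two ways of packaging the same data, so the desired isomorphism is essentially the identity functor. I would first exploit the fact that $K^{\ast\cop}$ and $K^\ast$ coincide as algebras, so $K^{\ast\cop}$-$K^{\ast\cop}$-bimodules and $K^\ast$-$K^\ast$-bimodules carry the same underlying data, and the relative tensor products $\otimes_{K^{\ast\cop}}$ and $\otimes_{K^\ast}$ agree. Moreover, since $K^{\ast\cop}\otimes H^\ast$ is a tensor product of coalgebras, a right $K^{\ast\cop}\otimes H^\ast$-comodule structure splits uniquely into a commuting pair consisting of a right $K^{\ast\cop}$- and a right $H^\ast$-comodule structure; the familiar flip $\sum m_{(1)}^{K^{\ast\cop}}\otimes m_{(0)}$ then turns the right $K^{\ast\cop}$-coaction into a left $K^\ast$-coaction (since the $\cop$ cancels the order reversal produced by the flip).

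Having matched the object data, the main task is to verify that the compatibility conditions coincide. Writing the combined coaction as $m\mapsto\sum m^{(-1)}\otimes m^{(0)}\otimes m^{(1)}\in K^\ast\otimes M\otimes H^\ast$, and expanding the tensor-product multiplication in $K^{\ast\cop}\otimes H^\ast$, condition (\ref{eqn: rcomod of lk*mod}) unfolds (using $K^\ast$-Sweedler throughout) as
\begin{equation*}
\sum(k^\ast\cdot m)^{(-1)}\otimes(k^\ast\cdot m)^{(0)}\otimes(k^\ast\cdot m)^{(1)}
=\sum k^\ast_{(1)}m^{(-1)}\otimes k^\ast_{(2)}\cdot m^{(0)}\otimes\sigma_l(k^\ast_{(3)})m^{(1)}.
\end{equation*}
Applying $\varepsilon_{H^\ast}$ to the third tensorand recovers the compatibility between the left $K^\ast$-action and the left $K^\ast$-coaction (the analog of (\ref{eqn: ll condition})), while applying $\varepsilon_{K^\ast}$ to the first tensorand recovers the compatibility between the left $K^\ast$-action and the right $H^\ast$-coaction mediated by $\sigma_l$ (the analog of (\ref{eqn: lr condition})). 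Conversely, tensoring these two projected identities reconstructs the combined equation by a short Sweedler manipulation. Condition (\ref{eqn: rcomod of rk*mod}) is treated symmetrically and produces the analogs of (\ref{eqn: rl condition}) and (\ref{eqn: rr condition}).

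It remains to match the tensor structures. In both categories the left and right $K^\ast$-module structures on $M\otimes_{K^\ast}N$ act through the outer tensorands, while the coactions are diagonal; the diagonal right $K^{\ast\cop}\otimes H^\ast$-coaction $\sum(m_{(0)}\otimes_{K^\ast}n_{(0)})\otimes m_{(1)}n_{(1)}$ decomposes, under the identifications above, into exactly the diagonal left $K^\ast$- and right $H^\ast$-coactions of Lemma \ref{lem: tensor cat}(2). For the unit object, $K^\ast$ equipped with its regular bimodule structure, $\Delta$ as left $K^\ast$-coaction, and $(\id\otimes\sigma_l)\circ\Delta$ as right $H^\ast$-coaction recombines precisely into the right $K^{\ast\cop}\otimes H^\ast$-coaction (\ref{eqn: rightcoidealsubalg}) on the unit $K^{\ast\cop}$ of ${}^{}_{K^{\ast\cop}}\M^{K^{\ast\cop}\otimes H^\ast}_{K^{\ast\cop}}$, and morphisms coincide automatically. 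No deep obstacle arises; the main practical care is bookkeeping the Sweedler indices when passing between $K^\ast$ and $K^{\ast\cop}$, so as not to confuse coproducts with their cooposites during the decomposition step.
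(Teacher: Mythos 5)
Your proposal is correct and takes essentially the same route as the paper: both identify the two categories by fusing the left $K^\ast$-coaction and the right $H^\ast$-coaction into a single right $K^{\ast\cop}\otimes H^\ast$-coaction $m\mapsto\sum m^{(0)}\otimes(m^{(-1)}\otimes m^{(1)})$ (and splitting it conversely), then matching the compatibility conditions, the tensor products over $K^\ast=K^{\ast\cop}$ as algebras, and the unit objects. Your counit-projection/reconstruction argument for the compatibility conditions is just a repackaging of the paper's direct Sweedler computation, so no substantive difference remains.
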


\begin{proof}
At first for each $M\in {}^{K^\ast}_{K^\ast}\M^{ H^\ast}_{K^\ast}$ , we set $M$ as the object in ${}^{}_{K^{\ast\cop}}\M^{K^{\ast\cop}\otimes H^\ast}_{K^{\ast\cop}}$ with three structures induced as follows:
The $K^{\ast\cop}$-$K^{\ast\cop}$-bimodule structure on $M$ coincides with its original $K^\ast$-$K^\ast$-bimodule structure, and the right $K^{\ast\cop}\otimes H^\ast$-comodule structure is defined as
\begin{equation}\label{eqn: rK*coptH*com of M}
 M\rightarrow  M\otimes(K^{\ast\cop}\otimes H^\ast),\;\;\;m\mapsto \sum m^{(0)}\otimes (m^{(-1)}\otimes m^{(1)}).
\end{equation}
Let us show that the right $K^{\ast\cop}\otimes H^\ast$-comodule structure on $M$ preserves the left and right $K^{\ast\cop}$-actions as follows: For any $k^\ast\in K^\ast$ and $m\in M$,
\begin{eqnarray*}
&& \sum(k^\ast\cdot m)^{(0)}\otimes\big[(k^\ast\cdot m)^{(-1)}\otimes(k^\ast\cdot m)^{(1)}\big] \\
&=&
\sum (k^\ast_{(2)}\cdot m^{(0)})\otimes\big[ k^\ast_{(1)}m^{(-1)}\otimes\sigma_l(k^\ast_{(3)})m^{(1)}\big] \\
&=&
\sum (k^\ast_{(2)}\cdot m^{(0)})\otimes\big[(k^\ast_{(1)}\otimes\sigma_l(k^\ast_{(3)})) (m^{(-1)}\otimes m^{(1)})\big]\\
&\overset{(\ref{eqn: rightcoidealsubalg})}=&
k^\ast\cdot\big(\sum m^{(0)}\otimes(m^{(-1)}\otimes m^{(1)})\big),
\end{eqnarray*}
and similarly
\begin{eqnarray*}
\sum(m\cdot k^\ast)^{(0)}\otimes\big[(m\cdot k^\ast)^{(-1)}\otimes(m\cdot k^\ast)^{(1)}\big]
&=&
\sum (m^{(0)}\cdot k^\ast_{(2)})\otimes \big[m^{(-1)}k^\ast_{(1)}\otimes m^{(1)}\sigma_l(k^\ast_{(3)})\big] \\
&=&
\big(\sum m^{(0)}\otimes(m^{(-1)}\otimes m^{(1)})\big)\cdot k^\ast.
\end{eqnarray*}
It follows that $M\in{}^{}_{K^{\ast\cop}}\M^{K^{\ast\cop}\otimes H^\ast}_{K^{\ast\cop}}$, and thus we obtain the desired functor, which is clearly an isomorphism.

Now we explain that the isomorphism defined above is a tensor functor.
In fact,
for any $M, N\in {}^{K^\ast}_{K^\ast}\M^{ H^\ast}_{K^\ast}$,
their tensor product object $M\otimes_{K^\ast}N$
will be sent to the relative Doi-Hopf module $M\otimes_{K^{\ast\cop}}N$ as an object in ${}^{}_{K^{\ast\cop}}\M^{K^{\ast\cop}\otimes H^\ast}_{K^{\ast\cop}}$. This is because $M\otimes_{K}N$ and $M\otimes_{K^{\ast\cop}}N$ are in fact the same $K^\ast$-$K^\ast$-bimodules (or equivalently, $K^{\ast\cop}$-$K^{\ast\cop}$-bimodules), and their right $K^{\ast\cop}\otimes H^\ast$-coactions are both induced to be
\begin{equation}
m\otimes_{K^\ast} n\mapsto\sum(m^{(0)}\otimes n^{(0)})\otimes(m^{(-1)}n^{(-1)}\otimes m^{(1)}n^{(1)}).
\end{equation}
Besides, the unit object $K^\ast$ is also sent to the unit object $K^{\ast\cop}$. As a conclusion, we have defined a tensor isomorphism with identity monoidal structure.
\end{proof}

In particular, for each $V\in\Rep(K^{\ast\cop}\bowtie_\sigma H)$,
 we note that $V\otimes K^\ast$ can be an object in ${}^{}_{K^{\ast\cop}}\M^{K^{\ast\cop}\otimes H^\ast}_{K^{\ast\cop}}$ according to Lemma $\ref{lem:leftcoiso}$. By the result of the lemma above, it follows immediately that $V\otimes K^\ast$ also belongs to the category ${}^{K^\ast}_{K^\ast}\M^{ H^\ast}_{K^\ast}$.
Specifically, one may verify that the left $K^\ast$-comodule and right $H^\ast$-comodule structures on $V\otimes K^\ast$ are respectively given as:
\begin{equation}\label{eqn: lK* rH*}
v\otimes k^\ast\mapsto\sum k^\ast_{(1)}\otimes (v\otimes k^\ast_{(2)})\;\;\;\;\text{and}\;\;\;\;
v\otimes k^\ast\mapsto\sum (v_{\langle0\rangle}\otimes k^\ast_{(1)})\otimes v_{\langle1\rangle}\sigma_l(k^\ast_{(2)}),
\end{equation}
where $\sum v_{\la0\ra}\la l^\ast,v_{\la1\ra}\ra=(l^\ast\bowtie1)v$ holds for any $l^\ast\in K^\ast$.
In fact, these structures
will induce via (\ref{eqn: rK*coptH*com of M}) the right $K^{\ast\cop}\otimes H^\ast$-comodule structure on $V\otimes K^\ast$ as (\ref{comods of VotimesK*}).
On the other hand,
the left and right $K^{\ast}$-actions on $V\otimes K^\ast$ coincide in fact with (\ref{mods of VotimesK*}) given by:
\begin{equation}\label{mods of VotimesK*new}
l^\ast\cdot(v\otimes k^\ast)=\sum (l^\ast_{(2)}\bowtie v)\otimes l^\ast_{(1)}k^\ast\;\;\;\;\text{and}\;\;\;\;(v\otimes k^\ast)\cdot l^\ast=\sum v\otimes k^\ast l^\ast
\end{equation}
for any $l^\ast\in K^{\ast}$, $v\in V$ and $k^\ast\in K^\ast$.

Consequently, we know by Proposition \ref{prop: four* cong four} that $V^\ast\otimes K\cong(V\otimes K^\ast)^\ast\in{}^K_K\M^ K_H$.
Therefore, through the composition of the tensor equivalences between the previously mentioned categories, we conclude the main theorem of this section as follows.

\begin{theorem}\label{them: cc}
There are (covariant) tensor equivalences of finite tensor categories:
\begin{equation}\label{eqn: four equiv HYDK}
\big({}^K_K\M^K_H\big)^\vee\approx{}^{K^\ast}_{K^\ast}\M^{ H^\ast}_{K^\ast}\approx\Rep(K^{\ast\cop}\bowtie_\sigma H)\cong{}_H\mathfrak{YD}^K,
\end{equation}
whose composition is
\begin{equation}\label{eqn: equivalence}
M\mapsto\overline{M^\ast}=M^\ast/(M^\ast\cdot(K^\ast)^+)
\end{equation}
with quasi-inverse
\begin{equation}\label{eqn: quasi-inverse}
V^\ast\otimes K\mapsfrom V.
\end{equation}
Here $(-)^\vee$ denotes the category with reversed arrows.
\end{theorem}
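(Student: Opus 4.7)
The plan is to obtain the three equivalences in (\ref{eqn: four equiv HYDK}) simply by composing results already established, and then to track a single object through the composition in order to recognize the explicit forms (\ref{eqn: equivalence}) and (\ref{eqn: quasi-inverse}).

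First, I would obtain $\big({}^K_K\M^K_H\big)^\vee\approx{}^{K^\ast}_{K^\ast}\M^{ H^\ast}_{K^\ast}$ directly from Proposition \ref{prop: four* cong four}: the contravariant tensor equivalence $M\mapsto M^\ast$ becomes a covariant tensor equivalence after reversing the arrows on the source side. Next, I would invoke Lemma \ref{lem: four* cong three} to identify ${}^{K^\ast}_{K^\ast}\M^{ H^\ast}_{K^\ast}\cong{}^{}_{K^{\ast\cop}}\M^{K^{\ast\cop}\otimes H^\ast}_{K^{\ast\cop}}$ as finite tensor categories, and then Corollary \ref{cor: te lDoi} for the tensor equivalence
$$\Phi:{}^{}_{K^{\ast\cop}}\M^{K^{\ast\cop}\otimes H^\ast}_{K^{\ast\cop}}\approx \Rep(K^{\ast\cop}\bowtie_\sigma H),\;\;\;\;M\mapsto M/M(K^{\ast\cop})^+$$
with quasi-inverse $\Psi:V\mapsto V\otimes K^\ast$. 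Finally the tensor isomorphism $\Rep(K^{\ast\cop}\bowtie_\sigma H)\cong{}_H\mathfrak{YD}^K$ is Corollary \ref{cor: YD iso repqd}.

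For the explicit description (\ref{eqn: equivalence}) of the composition, starting from $M\in\big({}^K_K\M^K_H\big)^\vee$, Proposition \ref{prop: four* cong four} sends it to $M^\ast\in{}^{K^\ast}_{K^\ast}\M^{H^\ast}_{K^\ast}$ whose structures are given by (\ref{eqn: mod P*}) and (\ref{eqn: lrcomod P* relation}) in Remark \ref{rmk: four cong four*}; Lemma \ref{lem: four* cong three} then reads $M^\ast$ off as an object of ${}^{}_{K^{\ast\cop}}\M^{K^{\ast\cop}\otimes H^\ast}_{K^{\ast\cop}}$, and finally $\Phi$ outputs the quotient $M^\ast/(M^\ast\cdot(K^\ast)^+)$, which is exactly the displayed formula. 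Conversely, for (\ref{eqn: quasi-inverse}), starting from $V\in\Rep(K^{\ast\cop}\bowtie_\sigma H)$, the functor $\Psi$ produces $V\otimes K^\ast\in{}^{}_{K^{\ast\cop}}\M^{K^{\ast\cop}\otimes H^\ast}_{K^{\ast\cop}}$ with structures (\ref{mods of VotimesK*}) and (\ref{comods of VotimesK*}); Lemma \ref{lem: four* cong three} reinterprets it in ${}^{K^\ast}_{K^\ast}\M^{ H^\ast}_{K^\ast}$ with the structures (\ref{eqn: lK* rH*}) and (\ref{mods of VotimesK*new}); applying the quasi-inverse of Proposition \ref{prop: four* cong four} yields $(V\otimes K^\ast)^\ast$, which I would then identify canonically with $V^\ast\otimes K$ via the standard duality $(V\otimes K^\ast)^\ast\cong V^\ast\otimes K^{\ast\ast}=V^\ast\otimes K$.

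The bulk of the work that remains is essentially bookkeeping, and the main obstacle I anticipate is organizing this bookkeeping cleanly: one must check that the canonical linear isomorphism $(V\otimes K^\ast)^\ast\cong V^\ast\otimes K$ transports the four structures in (\ref{eqn: lK* rH*})--(\ref{mods of VotimesK*new}) to the four structures of ${}^K_K\M^K_H$ described by (\ref{eqn: mod M*})--(\ref{eqn: lrcomod M* relation}) after dualization, and one must verify that the composition of the monoidal structures $J_{M,N}$ from (\ref{J_{M, N}}), the identity monoidal structure from Lemma \ref{lem: four* cong three}, and the monoidal constraint of Corollary \ref{cor: te lDoi} combine into a genuine tensor functor. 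Since each ingredient is already proved to be tensor (or tensor isomorphism) in the cited statements, this last point is formal. Finally, I would remark that the isomorphism Corollary \ref{cor: YD iso repqd} is already a tensor isomorphism, so plugging it in at the right-hand end of (\ref{eqn: four equiv HYDK}) requires no further argument.
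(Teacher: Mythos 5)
Your proposal is correct and follows essentially the same route as the paper: both decompose (\ref{eqn: four equiv HYDK}) into Proposition \ref{prop: four* cong four}, Lemma \ref{lem: four* cong three} combined with Corollary \ref{cor: te lDoi}, and Corollary \ref{cor: YD iso repqd}, then track an object through the composition and its quasi-inverse. The only substantive content you defer as ``bookkeeping''---checking that the canonical linear isomorphism $V^\ast\otimes K\cong(V\otimes K^\ast)^\ast$ transports the four explicit structures into those of ${}^K_K\M^K_H$---is exactly the computation the paper carries out in detail, so nothing is missing in substance.
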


\begin{proof}
According to our preceding results,
we start by describing the three equivalences in (\ref{eqn: four equiv HYDK}), and show that their composition will be (\ref{eqn: equivalence}) as a result:
\begin{itemize}
\item[(1)]
The first one is established in Proposition \ref{prop: four* cong four}, which sends each $M\in{}^K_K\M^ K_H$ to its dual space $M^\ast$ as an object in ${}^{K^\ast}_{K^\ast}\M^{ H^\ast}_{K^\ast}$;

\item[(2)]
The second functor is the composition of the isomorphism in Lemma \ref{lem: four* cong three} and $\Phi$ in Corollary \ref{cor: te lDoi},  and it sends the object $M^\ast\in{}^{K^\ast}_{K^\ast}\M^{ H^\ast}_{K^\ast}$ to the quotient $$M^\ast/(M^\ast\cdot(K^{\cop\ast})^+)\;\;\;\;\;\;\text{(or}\;\; M^\ast/(M^\ast\cdot(K^{\ast})^+)\;\;
\text{without confusions)} $$
in $\Rep(K^{\ast\cop}\bowtie_\sigma H)$;

\item[(3)]
The last equivalence is from Corollary \ref{cor: YD iso repqd}, making $M^\ast/(M^\ast\cdot(K^{\ast})^+)$ be endowed with the structures of a relative Yetter-Drinfeld module in ${}_H\mathfrak{YD}^K$.
\end{itemize}

Conversely, due to similar arguments, the composition of quasi-inverses of (\ref{eqn: four equiv HYDK}) send each $V\in{}_H\mathfrak{YD}^K$ to the object of form $(V\otimes K^\ast)^\ast$ in ${}^K_K\M^K_H$.
Specifically, $V$ should be at first a left $K^{\ast\cop}\bowtie_\sigma H$-module via the isomorphism (\ref{eqn: YD iso repqd}), and thus
$V\otimes K^\ast\in{}^{K^\ast}_{K^\ast}\M^{ H^\ast}_{K^\ast}$ as explained before this theorem, whose coactions are given by (\ref{eqn: lK* rH*}).
Then it is sent by (\ref{eqn: four* cong four}) to the dual space $(V\otimes K^\ast)^\ast$ with structures determined via (\ref{eqn: mod M*}) and (\ref{eqn: comod M*}).

Now we define on the space $V^\ast\otimes K$ four structures as follows:
The left $K$-action and the right $H$-action are respectively given by:
\begin{equation}\label{eqn: modstru of V*K}
l\cdot(v^\ast\otimes k)=v^\ast\otimes lk\;\;\;\;
\text{and}\;\;\;\;
(v^\ast\otimes k)\cdot h=\sum (v^\ast\cdot  h_{(1)})\otimes k\sigma_r(h_{(2)}),
\end{equation}
for any $l,k\in K$ and $v^\ast\in V^\ast$, which make $V^\ast\otimes K$ a $K$-$H$-bimodule.
On the other hand, the left and right $K$-coactions
\begin{equation}\label{eqn: comodstru of V*K}
v^\ast\otimes k\mapsto\sum k_{(1)}v^\ast_{\langle-1\rangle}\otimes(v^\ast_{\langle0
\rangle}\otimes k_{(2)})\;\;\;\;\text{and}\;\;\;\;
v^\ast\otimes k\mapsto\sum(v^\ast\otimes k_{(1)})\otimes k_{(2)}
\end{equation}
on $V^\ast\otimes K$ are defined, where
\begin{equation}\label{eqn: v0v1}
\sum v^\ast_{\langle-1\rangle}\langle v^\ast_{\langle0\rangle}, v\rangle=\sum\langle v^\ast, v_{\langle0\rangle}\rangle v_{\langle1\rangle}
\end{equation}
holds for any $v\in V$.

To complete the proof, we claim that
$V^\ast\otimes K$ is an object isomorphic to $(V\otimes K^\ast)^\ast$ in ${}^K_K\M^K_H$. For the purpose, it suffices to verify that
the canonical linear isomorphism
$\varphi:V^\ast\otimes K\cong (V\otimes K^\ast)^\ast$
preserves both actions and both coactions by following calculations:
\begin{itemize}
\item
$\varphi$ is a left $K$-module map, since
\begin{eqnarray*}
\la l\cdot\varphi(v^\ast\otimes k), v\otimes k^\ast\ra
&\overset{(\ref{eqn: mod M*})}=&
\la \varphi(v^\ast\otimes k),(v\otimes k^\ast)\leftharpoonup l\ra
\overset{(\ref{eqn: lK* rH*})}=
\sum \la k^\ast_{(1)},l\ra\la \varphi(v^\ast\otimes k),v\otimes k^\ast_{(2)}\ra \\
&=&
\la k^\ast,lk\ra\la v^\ast,v\ra
=
\la \varphi(v^\ast\otimes lk), v\otimes k^\ast\ra
\overset{(\ref{eqn: modstru of V*K})}=
\la \varphi(l\cdot(v^\ast\otimes k)), v\otimes k^\ast\ra
\end{eqnarray*}
hold for all $v^\ast\in V^\ast$, $l,k\in K$, $v\in V$ and $k\in K^\ast$;

\item
$\varphi$ is a right $H$-module map, since
\begin{eqnarray*}
\la \varphi(v^\ast\otimes k)\cdot h, v\otimes k^\ast\ra
&\overset{(\ref{eqn: mod M*})}=&
\la \varphi(v^\ast\otimes k),h\rightharpoonup(v\otimes k^\ast)\ra  \\
&\overset{(\ref{eqn: lK* rH*})}=&
\sum\la \varphi(v^\ast\otimes k),v_{\la0\ra}\otimes k^\ast_{(1)}\ra
  \la v_{\la1\ra}\sigma_l(k^\ast_{(2)}),h\ra \\
&=&
\sum\la v^\ast,v_{\la0\ra}\ra\la k^\ast_{(1)},k\ra\la v_{\la1\ra},h_{(1)}\ra
  \la k^\ast_{(2)},\sigma_r(h_{(2)})\ra  \\
&\overset{(\ref{eqn: v0v1})}=&
\sum \la v^\ast\cdot h_{(1)},v\ra\la k^\ast,k\sigma_r(h_{(2)})  \\
&=&
\sum \la\varphi((v^\ast\cdot h_{(1)})\otimes k\sigma_r(h_{(2)})),v\otimes k^\ast\ra
\\
&\overset{(\ref{eqn: modstru of V*K})}=&
\la \varphi((v^\ast\otimes k)\cdot h), v\otimes k^\ast\ra
\end{eqnarray*}
hold for all $h\in H$, $v^\ast\in V^\ast$, $k\in K$, $v\in V$ and $k\in K^\ast$;

\item
$\varphi$ is a left $K$-comodule map, which means by (\ref{eqn: comodstru of V*K}) that
$$\sum k_{(1)} v^\ast_{\la-1\ra}\otimes\varphi(v^\ast_{\la0\ra}\otimes k_{(2)})
=\sum\varphi(v^\ast\otimes k)^{(-1)}\otimes\varphi(v^\ast\otimes k)^{(0)},$$
for all $v^\ast\in V^\ast$ and $k\in K$, since
\begin{eqnarray*}
\sum \la l^\ast,k_{(1)} v^\ast_{\la-1\ra}\ra\la\varphi(v^\ast_{\la0\ra}\otimes k_{(2)}),v\otimes k^\ast\ra
&=&
\sum \la l^\ast_{(1)},k_{(1)}\ra\la l^\ast_{(2)},v^\ast_{\la-1\ra}\ra\la v^\ast_{\la0\ra},v\ra\la k^\ast,k_{(2)}\ra  \\
&\overset{(\ref{eqn: v0v1})}=&
\sum \la v^\ast,v_{\la0\ra}\ra \la l^\ast_{(2)},v_{\la1\ra}\ra\la l^\ast_{(1)}k^\ast,k\ra  \\
&\overset{(\ref{eqn: YD iso repqd action})}=&
\sum\la v^\ast,(l^\ast_{(2)}\bowtie1)v\ra\la l^\ast_{(1)}k^\ast,k\ra  \\
&=&
\sum\la \varphi(v^\ast\otimes k),(l^\ast_{(2)}\bowtie1)v\otimes l^\ast_{(1)}k^\ast\ra  \\
&\overset{(\ref{mods of VotimesK*new})}=&
\sum\la \varphi(v^\ast\otimes k),l^\ast\cdot (v\otimes k^\ast)\ra \\
&\overset{(\ref{eqn: lrcomod M* relation})}=&
\sum\la l^\ast,\varphi(v^\ast\otimes k)^{(-1)}\ra
  \la\varphi(v^\ast\otimes k)^{(0)},v\otimes k^\ast\ra
\end{eqnarray*}
hold for all $v^\ast\in V^\ast$, $k\in K$, $v\in V$ and $k^\ast,l^\ast\in K^\ast$;

\item
$\varphi$ is a right $K$-comodule map, which means by (\ref{eqn: comodstru of V*K}) that
$$\sum\varphi(v^\ast\otimes k_{(1)})\otimes k_{(2)}
=\sum\varphi(v^\ast\otimes k)^{(0)}\otimes\varphi(v^\ast\otimes k)^{(1)},$$
for all $v^\ast\in V^\ast$ and $k\in K$, since
\begin{eqnarray*}
\sum\la\varphi(v^\ast\otimes k_{(1)}),v\otimes k^\ast\ra\la l^\ast, k_{(2)}\ra
&=&
\sum\la v^\ast,v\ra\la k^\ast,k_{(1)}\ra\la l^\ast, k_{(2)}\ra   \\
&=&
\sum\la v^\ast,v\ra\la k^\ast l^\ast,k\ra  \\
&=&
\sum\la \varphi(v^\ast\otimes k),v\otimes k^\ast l^\ast\ra  \\
&\overset{(\ref{mods of VotimesK*new})}=&
\sum\la \varphi(v^\ast\otimes k),(v\otimes k^\ast)\cdot l^\ast\ra  \\
&\overset{(\ref{eqn: lrcomod M* relation})}=&
\sum\la\varphi(v^\ast\otimes k)^{(0)},v\otimes k\ra
  \la l^\ast,\varphi(v^\ast\otimes k)^{(1)}\ra
\end{eqnarray*}
hold for all $v^\ast\in V^\ast$, $k\in K$, $v\in V$ and $k^\ast,l^\ast\in K^\ast$.
\end{itemize}
As a conclusion, $V^\ast\otimes K$ belongs in ${}^K_K\M^K_H$ as well, and $V^\ast\otimes K\cong (V\otimes K^\ast)^\ast$ is an isomorphism in ${}^K_K\M^K_H$ which is natural in $V$. Therefore, (\ref{eqn: quasi-inverse}) is also a quasi-inverse of (\ref{eqn: equivalence}).
\end{proof}

\subsection{Comparison with Schauenburg's characterization}

Since the antipode of a finite-dimensional Hopf algebra is bijective according to \cite[Proposition 2]{LS69},
we cite Schauenburg's characterization \cite[Corollary 6.4]{Sch94} in finite-dimensional cases as the following lemma.

\begin{lemma}\label{lem: sch94}
There is an equivalence of finite tensor categories
\begin{equation*}
{}_H^H\M_H^H\approx{}^{H}\mathfrak{YD}_{H},\;\;\; M\mapsto M_\mathrm{coinv},
\end{equation*}
which sends each $M\in{}_K^K\M_H^K$ to the space $M_\mathrm{coinv}$ of its coinvariants as a right $H$-comodule, with structures as follows:
\begin{itemize}
\item
The right $H$-module structure $\triangleleft$ is given by
\begin{equation}\label{eqn: rH of McoK}
m\triangleleft h=\sum S^{-1}(h_{(2)})\cdot m\cdot h_{(1)}\;\;\;\;\;\;\;\;
(\forall m\in M_\mathrm{coinv},\;\forall h\in H);
\end{equation}

\item
The left $H$-comodule structure inherits from $M$.
\end{itemize}
\end{lemma}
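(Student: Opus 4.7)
The plan is to invoke the fundamental theorem of Hopf modules and then transfer the remaining structures to the coinvariant space. First, I would view $M\in {}^H_H\M^H_H$ as merely a right $H$-Hopf module via its right action and right coaction, forgetting the left structures temporarily. The fundamental theorem of Hopf modules \cite[Theorem 4.1.1]{Swe69} then yields a canonical isomorphism
\begin{equation*}
M_{\mathrm{coinv}}\otimes H \xrightarrow{\ \sim\ } M,\qquad m\otimes h\mapsto m\cdot h,
\end{equation*}
with explicit inverse involving $S$. This reduces the task to identifying the ``extra'' structure ($\rho_L$ and the left action) on $M$ as something canonically built out of data on $M_{\mathrm{coinv}}$.

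Second, I would equip $M_{\mathrm{coinv}}$ with the Yetter--Drinfeld structure. The left $H$-comodule structure on $M$ restricts to $M_{\mathrm{coinv}}$ because the bicomodule axiom forces $\rho_L$ and $\rho_R$ to commute, so $M_{\mathrm{coinv}}$ is automatically a left $H$-subcomodule. For the right action $m\triangleleft h=\sum S^{-1}(h_{(2)})\cdot m\cdot h_{(1)}$, the invariance $m\triangleleft h\in M_{\mathrm{coinv}}$ is a short calculation: applying $\rho_R$ and using compatibility of $\rho_R$ with both the left and right actions (the analogues of (\ref{eqn: rr condition}) and (\ref{eqn: lr condition}) at $K=H$, $\sigma=\mathrm{ev}$) gives
\begin{equation*}
\rho_R\bigl(S^{-1}(h_{(3)})\cdot m\cdot h_{(1)}\bigr)
= \sum S^{-1}(h_{(4)})\cdot m\cdot h_{(1)}\otimes S^{-1}(h_{(3)})h_{(2)}
= (m\triangleleft h)\otimes 1,
\end{equation*}
the last equality by the antipode axiom $S^{-1}(h_{(3)})h_{(2)}=\e(h_{(2)})1$. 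The Yetter--Drinfeld compatibility (\ref{eqn: lKcomodrHmod of YD}) specialized to $K=H$ then follows by unfolding $\triangleleft$ and applying the remaining bicomodule/bimodule compatibilities; this is mechanical but not conceptually new.

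Third, I would construct the quasi-inverse $(-)\otimes H : {}^H\mathfrak{YD}_H\to {}^H_H\M^H_H$, sending $V$ to $V\otimes H$ with the trivial right coaction on the second factor, the right action $(v\otimes g)\cdot h=\sum v\triangleleft h_{(2)}\otimes S(h_{(1)})gh_{(3)}$, and left structures designed so that every Hopf module axiom holds. One then checks that the two functors are mutually quasi-inverse via the Hopf module isomorphism above, which sends $m$ precisely to the element $\sum \bigl(m_{(0)}\triangleleft \text{(antipode terms)}\bigr)\otimes m_{(1)}$ lying in $M_{\mathrm{coinv}}\otimes H$.

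Finally, to upgrade this $\k$-linear equivalence to a tensor equivalence I would exhibit a natural isomorphism $(M\square_H N)_{\mathrm{coinv}}\cong M_{\mathrm{coinv}}\otimes N_{\mathrm{coinv}}$ intertwining the Yetter--Drinfeld structures on both sides. The hard part is genuinely this last step: the cotensor product $\square_H$ (an equalizer) must be reconciled with the ordinary $\k$-linear tensor product of Yetter--Drinfeld modules (with its diagonal action/coaction), and writing out the explicit isomorphism requires choosing compatible representatives and carefully tracking four pieces of structure simultaneously. Everything else amounts to bookkeeping with the antipode axioms and the four bimodule/bicomodule compatibilities.
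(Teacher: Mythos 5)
Your outline is sound in its overall strategy, but note first that the paper does not prove this lemma at all: it is quoted from Schauenburg \cite[Corollary 6.4]{Sch94}, and the only argument the paper actually supplies in this direction is for the relative generalization (Proposition \ref{prop: four equiv KYDH}), of which the present lemma is the special case $K=H$, $\sigma=\mathrm{ev}$. Your route is the classical direct one --- trivialize the right Hopf-module structure by the fundamental theorem, transport the left coaction and the conjugation action $\triangleleft$ to $M_{\mathrm{coinv}}$, build a quasi-inverse $V\mapsto V\otimes H$, and construct the monoidal isomorphism $(M\square_H N)_{\mathrm{coinv}}\cong M_{\mathrm{coinv}}\otimes N_{\mathrm{coinv}}$ by hand --- which is essentially Schauenburg's original proof. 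The paper instead reaches the statement by composing previously established tensor equivalences: dualization ${}^K_K\M^K_H\to{}^{K^\ast}_{K^\ast}\M^{H^\ast}_{K^\ast}$ (Proposition \ref{prop: four* cong four}), identification with relative Doi--Hopf modules (Lemma \ref{lem: four* cong three}), Takeuchi's equivalence to $\Rep(K^{\ast\cop}\bowtie_\sigma H)$ (Corollary \ref{cor: te lDoi}), passage to Yetter--Drinfeld modules (Corollary \ref{cor: YD iso repqd} and Lemma \ref{them: YDmod mce}), and finally Lemma \ref{lem:coinv} to recognize the composite as $M\mapsto M_{\mathrm{coinv}}$. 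Your approach is more elementary and self-contained; the paper's buys the monoidal structure for free from the earlier machinery and yields the relative version simultaneously.

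Two caveats on your sketch. First, the right-action formula you propose on $V\otimes H$ is inconsistent with your own trivialization: under $m\otimes h\mapsto m\cdot h$ (right action, right coaction) the transported right $H$-action is simply multiplication in the second tensorand, and it is the \emph{left} action that acquires the twist, namely $g\cdot(m\otimes h)=\sum\bigl(m\triangleleft S(g_{(1)})\bigr)\otimes g_{(2)}h$; your formula $(v\otimes g)\cdot h=\sum v\triangleleft h_{(2)}\otimes S(h_{(1)})gh_{(3)}$ does not reduce to $v\otimes gh$ under the structure map, so the quasi-inverse as written is not an object of ${}^H_H\M^H_H$ mapping back to $M$. (There is also a harmless indexing slip in your $\rho_R$ computation.) Second, you correctly single out the monoidal compatibility $(M\square_H N)_{\mathrm{coinv}}\cong M_{\mathrm{coinv}}\otimes N_{\mathrm{coinv}}$ as the substantive remaining step, but you leave it entirely unexecuted; since the content of the lemma is precisely that the equivalence is a \emph{tensor} equivalence, a complete proof along your lines must exhibit that isomorphism and check it intertwines the four structures, which is the part of the argument that cannot be waved away as bookkeeping.
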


In this subsection, we aim to refine Theorem \ref{them: cc} to find a generalization of Lemma \ref{lem: sch94}.
Before stating the result, let us remark that the finite tensor categories  ${}_{H}\mathfrak{YD}^{K}$ and ${}^{K}\mathfrak{YD}_{H}$ mentioned in Lemma \ref{lem: tps of YD mod} are indeed tensor equivalent. This seems known, but we provide here a proof for completion.

\begin{lemma}\label{them: YDmod mce}
There exist a contravariant tensor equivalence:
\begin{equation}\label{eqn: lHYDrK equiv lKYDrH}
{}_{H}\mathfrak{YD}^{K}\approx {}^{K}\mathfrak{YD}_{H},\;\;V\mapsto V^\ast
\end{equation}
between finite tensor categories with the monoidal structure
\begin{equation}\label{J_{V, W}}
J_{V, W}: V^\ast\otimes W^\ast\rightarrow(V\otimes W)^\ast,\;\;\;\;v^\ast\otimes w^\ast\mapsto\la v^\ast\otimes w^\ast, -\ra.
\end{equation}
\end{lemma}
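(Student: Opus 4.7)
The plan is a direct verification, constructing the functor by dualization and then checking it carries the relevant structures contravariantly. For $V\in {}_H\YD^K$, I would equip $V^\ast$ with the right $H$-action $\la v^\ast \cdot h, v\ra := \la v^\ast, h\cdot v\ra$ (dual to the left $H$-action on $V$), and the left $K$-coaction $v^\ast\mapsto \sum v^\ast_{\la-1\ra}\otimes v^\ast_{\la0\ra}$ characterized by
\[
\sum v^\ast_{\la-1\ra}\la v^\ast_{\la0\ra}, v\ra \;=\; \sum \la v^\ast, v_{\la0\ra}\ra\, v_{\la1\ra}\qquad(\forall v\in V),
\]
dual to the right $K$-coaction on $V$. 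Both assignments are standard, so the main work is to verify the compatibility (\ref{eqn: lKcomodrHmod of YD}) required for $V^\ast\in {}^K\YD_H$.

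For this, I would test both sides of (\ref{eqn: lKcomodrHmod of YD}) by pairing the second tensor factor with an arbitrary $v\in V$. The LHS collapses at once to $\sum \la v^\ast, h\cdot v_{\la0\ra}\ra v_{\la1\ra}$. For the RHS, rewriting $\la v^\ast_{\la0\ra}\cdot h_{(2)}, v\ra = \la v^\ast_{\la0\ra}, h_{(2)}\cdot v\ra$ and applying the defining identity for the left $K$-coaction on $V^\ast$ to $h_{(2)}\cdot v$ yields an expression involving $\sum \la v^\ast, (h_{(2)}\cdot v)_{\la0\ra}\ra$ multiplied by $S^{-1}(\sigma_r(h_{(3)}))\,(h_{(2)}\cdot v)_{\la1\ra}\,\sigma_r(h_{(1)})$. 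Invoking the ${}_H\YD^K$-compatibility (\ref{eqn: lHmodrKcomod YD}) on $V$ with $h$ replaced by $h_{(2)}$ produces a quintuple Sweedler expansion in which the pairs $(h_{(1)},h_{(2)})$ and $(h_{(4)},h_{(5)})$ each appear in the form $\sigma_r(S^{-1}(h_{(\bullet+1)}))\,\sigma_r(h_{(\bullet)})$. Because $\sigma_r$ is a Hopf map (hence commutes with antipodes), each such expression factors as $\sigma_r(\sum S^{-1}(h_{(\bullet+1)})h_{(\bullet)})$, which equals $\e(h_{(\bullet)})1_K$ by the antipode axiom for $H^\cop$. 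Two successive collapses of this form reduce the RHS to $\sum \la v^\ast, h\cdot v_{\la0\ra}\ra v_{\la1\ra}$, matching the LHS.

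Functoriality on morphisms is immediate from the standard dualization of intertwiners; the quasi-inverse is obtained by dualizing again, with the canonical reflexivity $(V^\ast)^\ast\cong V$ being a natural isomorphism in ${}_H\YD^K$ by finite-dimensionality. For the monoidal structure $J_{V,W}$ of (\ref{J_{V, W}}): I would regard $V^\ast\otimes W^\ast$ as an object of ${}^K\YD_H$ via Lemma \ref{lem: tps of YD mod}(2), and $(V\otimes W)^\ast$ as the dual of the object $V\otimes W\in {}_H\YD^K$ from Lemma \ref{lem: tps of YD mod}(1); the canonical pairing $J_{V,W}$ then preserves both the right $H$-action and the left $K$-coaction by direct computation after pairing against $v\otimes w\in V\otimes W$ and unwinding the tensor-product formulae. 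Naturality in $V,W$ is obvious, and the associativity and unit coherence conditions for a (contravariant) tensor functor follow from those for the vector-space tensor product. The principal obstacle is the compatibility verification in the second paragraph, which requires careful coordination of the quintuple Sweedler expansion with two distinct applications of the antipode axiom; once these collapses are executed correctly, the remaining verifications are routine.
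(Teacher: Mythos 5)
Your proposal matches the paper's proof essentially step for step: the same dual module and comodule structures on $V^\ast$, the same verification of the compatibility condition by pairing against $v\in V$ and collapsing the resulting quintuple Sweedler expansion via the antipode identity $\sum S^{-1}(\sigma_r(h_{(\bullet+1)}))\sigma_r(h_{(\bullet)})=\e(h_{(\bullet)})1_K$, and the same check that $J_{V,W}$ is a morphism in ${}^K\mathfrak{YD}_H$ satisfying the hexagon-type coherence. No substantive differences.
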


\begin{proof}
Let $V\in{}_{H}\mathfrak{YD}^{K}$, and we should define its dual space $V^\ast$ to be canonically an object in ${}^{K}\mathfrak{YD}_{H}$.
Specifically, the right $H$-module structure on $V^\ast$ is given by
\begin{equation}\label{eqn: v* cdot h}
v^\ast\cdot h=\langle v^\ast, h\cdot(-)\rangle\;\;\;\;\;\;\;\;
(\forall h\in H,\;\forall v^\ast\in V^\ast),
\end{equation}
and the left $K$-comodule structure on $V^\ast$ is denoted by
\begin{equation}\label{eqn: lKcomod of V*}
v^\ast\mapsto\sum v^\ast_{\langle-1\rangle}\otimes v^\ast_{\langle0\rangle},\;\;\;\;\text{which satisfies}\;\;\;\;\sum v^\ast_{\langle-1\rangle}\langle v^\ast_{\langle0\rangle}, v\rangle=\sum\langle v^\ast, v_{\langle0\rangle}\rangle v_{\langle1\rangle}
\;\;\;\;(\forall v\in V).
\end{equation}
Now we verify that these structures satisfy the compatibility condition (\ref{eqn: lKcomodrHmod of YD}) in the category ${}^{K}\mathfrak{YD}_{H}$. In order to show that
\begin{equation}\label{eqn: lKcomodrHmod of V*0}
\sum(v^\ast\cdot h)_{\langle-1\rangle}\otimes (v^\ast\cdot h)_{\langle0\rangle}=\sum S^{-1}(\sigma_r(h_{(3)}))
v^\ast_{\langle-1\rangle}\sigma_r(h_{(1)})\otimes (v^\ast_{\langle0\rangle}\cdot h_{(2)})
\end{equation}
holds for any $h\in H$ and $v^\ast\in V^\ast$, we compare the images of both sides under any $\id\otimes v$ ($v\in V$) by following calculations:
\begin{eqnarray*}
&&
\sum S^{-1}(\sigma_r(h_{(3)}))
v^\ast_{\langle-1\rangle}\sigma_r(h_{(1)})\langle v^\ast_{\langle0\rangle}\cdot  h_{(2)}, v\rangle \\
&=&
\sum S^{-1}(\sigma_r(h_{(3)}))
v^\ast_{\langle-1\rangle}\sigma_r(h_{(1)})\langle v^\ast_{\langle0\rangle}, h_{(2)}\cdot v\rangle \\
&\overset{(\ref{eqn: lKcomod of V*})}=&
\sum S^{-1}(\sigma_r(h_{(3)}))(h_{(2)}\cdot v)_{\langle1\rangle}\sigma_r(h_{(1)})
\langle v^\ast, (h_{(2)}\cdot v)_{\langle0\rangle}\rangle\\
&\overset{(\ref{eqn: lHmodrKcomod YD})}=&
\sum S^{-1}(\sigma_r(h_{(5)}))\sigma_r(h_{(4)})v_{\langle1\rangle}
S^{-1}(\sigma_r(h_{(2)}))\sigma_r(h_{(1)})\langle v^\ast, h_{(3)}\cdot v_{\langle0\rangle}\rangle \\
&=&
\sum\langle v^\ast, h\cdot v_{\langle0\rangle}\rangle v_{\langle1\rangle}
=
\sum\langle v^\ast\cdot h, v_{\langle0\rangle}\rangle v_{\langle1\rangle} \\
&\overset{(\ref{eqn: lKcomod of V*})}=&
\sum(v^\ast\cdot h)_{\langle-1\rangle}\langle(v^\ast\cdot h)_{\langle0\rangle}, v\rangle.
\end{eqnarray*}
It follows that $V^\ast\in {}_H\mathfrak{YD}^K$, and hence we obtain the desired functor.

Next, $J$ is clearly a well-defined natural isomorphism, and we proceed to show that
$J_{V, W}$ is a morphism in ${}^{K}\mathfrak{YD}_{H}$ for any objects $V$ and $W$.
Let us verify that $J_{V, W}$ preserves left $K$-coactions for instance, since the right $H$-actions is preserved due to similar calculations: For any $v\in V$, $w\in W$ and $k^\ast\in K^\ast$,
\begin{eqnarray*}
&&\sum\la k^\ast, J_{V, W}(v^\ast\otimes w^\ast)_{\la-1\ra}\ra\la J_{V, W}(v^\ast\otimes w^\ast)_{\la0\ra}, v\otimes w\ra \\
&\overset{(\ref{eqn: lKcomod of V*})}=&
\sum\la J_{V, W}(v^\ast\otimes w^\ast), (v\otimes w)_{\la0\ra}\ra\la k^\ast, (v\otimes w)_{\la1\ra}\ra \\
&\overset{(\ref{eqn: rK YD})}=&
\sum\la J_{V, W}(v^\ast\otimes w^\ast), v_{\la0\ra}\otimes w_{\la0\ra}\ra\la k^\ast, w_{\la1\ra}v_{\la1\ra}\ra \\
&\overset{(\ref{J_{V, W}})}=&
\sum\la v^\ast\otimes w^\ast, v_{\la0\ra}\otimes w_{\la0\ra}\ra\la k^\ast_{(1)}, w_{\la1\ra}\ra\la k^\ast_{(2)}, v_{\la1\ra}\ra \\
&=&
\sum\la v^\ast, v_{\la0\ra}\ra\la w^\ast, w_{\la0\ra}\ra\la k^\ast_{(1)}, w_{\la1\ra}\ra\la k^\ast_{(2)}, v_{\la1\ra}\ra \\
&\overset{(\ref{eqn: lKcomod of V*})}=&
\sum\la k^\ast_{(1)}, w^\ast_{\la-1\ra}\ra\la k^\ast_{(2)}, v^\ast_{\la-1\ra} \ra\la v^\ast_{\la0\ra}, v\ra\la w^\ast_{\la0\ra}, w\ra \\
&\overset{(\ref{J_{V, W}})}=&
\sum\la k^\ast, w^\ast_{\la-1\ra}v^\ast_{\la-1\ra}\ra\la J_{V, W}(v^\ast_{\la0\ra}\otimes w^\ast_{\la0\ra}), v\otimes w\ra,
\end{eqnarray*}
which imply that
$$\sum J_{V,W}(v^\ast\otimes w^\ast)_{\la-1\ra}\otimes J_{V,W}(v^\ast\otimes w^\ast)_{\la0\ra}
=\sum w^\ast_{\la-1\ra}v^\ast_{\la-1\ra}\otimes J_{V, W}(v^\ast_{\la0\ra}\otimes w^\ast_{\la0\ra})$$
holds for any $v^\ast\in V^\ast$ and $w^\ast\in W^\ast$.

Finally, it is evident to note that the equation
\begin{equation}
J_{U\otimes V,W}\circ(J_{U,V}\otimes\id_{W^\ast})
=J_{U,V\otimes W}\circ(\id_{U^\ast}\otimes J_{V,W})
\end{equation}
holds for any $U,V,W\in {}_{H}\mathfrak{YD}^{K}$. The proof is completed.
\end{proof}

Besides, the following lemma should be also noted.
\begin{lemma}\label{lem:coinv}
Suppose $N$ is a finite-dimensional left $K$-comodule with structure $n\mapsto\sum n_{(-1)}\otimes n_{(0)}$ which induces the right $K^\ast$-action by
$$n\cdot k^\ast=\sum\la k^\ast,n_{(-1)}\ra n_{(0)}
\;\;\;\;(\forall k^\ast\in K^\ast,\;\forall n\in N).$$
If we regard $N^\ast$ as a right $K$-comodule induced by $N$ via the duality functor ${}^K\M\approx\M^K$, then the space $N^\ast_\mathrm{coinv}$ of its coinvariants coincides with the image of the injection
\begin{equation}\label{eqn: jmath}
q^\ast: (N/(N\cdot(K^\ast)^+))^\ast\rightarrowtail N^\ast,\;\;f\mapsto f\circ q
\end{equation}
induced by the quotient map $q:N\twoheadrightarrow N/(N\cdot(K^\ast)^+)$.
\end{lemma}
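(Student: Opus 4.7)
The plan is to translate coinvariance on $N^\ast$ (with respect to the right $K$-coaction) into a vanishing condition on the functional $n^\ast$ itself, and then identify that vanishing condition with membership in $\mathrm{Im}(q^\ast)$. First I recall that under the duality ${}^K\M \approx \M^K$, the right $K$-coaction $n^\ast \mapsto \sum n^\ast_{(0)} \otimes n^\ast_{(1)}$ on $N^\ast$ is uniquely characterized by
\begin{equation*}
\sum \langle n^\ast_{(0)}, n \rangle n^\ast_{(1)} = \sum \langle n^\ast, n_{(0)} \rangle n_{(-1)}
\qquad (\forall n \in N).
\end{equation*}
From this formula, the induced left $K^\ast$-action on $N^\ast$, namely $k^\ast \cdot n^\ast := \sum n^\ast_{(0)} \langle k^\ast, n^\ast_{(1)} \rangle$, will satisfy the adjunction identity $\langle k^\ast \cdot n^\ast, n \rangle = \langle n^\ast, n \cdot k^\ast \rangle$ for all $n \in N$ and $k^\ast \in K^\ast$, by a short computation combining the display above with the formula $n \cdot k^\ast = \sum \langle k^\ast, n_{(-1)} \rangle n_{(0)}$ in the statement.

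Next, because everything is finite-dimensional and $K^\ast$ separates points of $K$, an element $n^\ast \in N^\ast$ will be coinvariant, i.e.\ $\sum n^\ast_{(0)} \otimes n^\ast_{(1)} = n^\ast \otimes 1_K$, if and only if $k^\ast \cdot n^\ast = \langle k^\ast, 1_K \rangle n^\ast$ for every $k^\ast \in K^\ast$. Using the adjunction identity, this rewrites as
\begin{equation*}
\langle n^\ast, \, n \cdot (k^\ast - \langle k^\ast, 1_K \rangle 1_{K^\ast}) \rangle = 0
\qquad (\forall n \in N, \; \forall k^\ast \in K^\ast).
\end{equation*}
Here the key observation is that $(K^\ast)^+ = \{k^\ast - \langle k^\ast, 1_K \rangle 1_{K^\ast} : k^\ast \in K^\ast\}$: this set is contained in $(K^\ast)^+$ tautologically since $\langle k^\ast - \langle k^\ast, 1_K \rangle 1_{K^\ast}, 1_K \rangle = 0$, and conversely any $k'^\ast \in (K^\ast)^+$ already satisfies $\langle k'^\ast, 1_K \rangle = 0$ and hence coincides with $k'^\ast - \langle k'^\ast, 1_K \rangle 1_{K^\ast}$. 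The condition above thereby becomes $n^\ast|_{N \cdot (K^\ast)^+} = 0$.

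Finally, since $q : N \twoheadrightarrow N/(N \cdot (K^\ast)^+)$ is a surjection with $\ker q = N \cdot (K^\ast)^+$, the standard characterization of the transpose gives $\mathrm{Im}(q^\ast) = \{n^\ast \in N^\ast : n^\ast|_{N \cdot (K^\ast)^+} = 0\}$. Combining this with the previous step yields $N^\ast_{\mathrm{coinv}} = \mathrm{Im}(q^\ast)$, as claimed. The argument is essentially routine dualization; the point needing the most care is the verification of the adjunction identity and the use of finite-dimensionality to pass between the comodule coinvariance condition and its $K^\ast$-module reformulation.
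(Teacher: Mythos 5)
Your proof is correct and follows essentially the same route as the paper's: identify $\mathrm{Im}(q^\ast)$ with the annihilator of $N\cdot(K^\ast)^+$, transpose the right $K^\ast$-action on $N$ to a left $K^\ast$-action on $N^\ast$ via $\la k^\ast\cdot n^\ast,n\ra=\la n^\ast,n\cdot k^\ast\ra$, and recognize the resulting condition as the invariance (equivalently, coinvariance) condition. The only cosmetic difference is that the paper outsources the final invariants-equal-coinvariants step to \cite[Lemma 1.7.2(1)]{Mon93}, whereas you verify it directly using that $K^\ast$ separates points of $K$.
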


\begin{proof}
First we know that the image of $q^\ast$ should be
$$\mathrm{Im}(q^\ast)=\{n^\ast\in N^\ast\mid \la n^\ast, N\cdot(K^\ast)^+\ra=0\}.$$
Now let us consider $N^\ast$ again as the left $K^\ast$-module canonically with structure $\cdot$ satisfying that
$$
\la k^\ast\cdot n^\ast, n\ra=\sum\la k^\ast, n_{(-1)}\ra\la n^\ast, n_{(0)}\ra=\la n^\ast, n\cdot k^\ast\ra
$$
hold for all $k^\ast\in K^\ast$, $n^\ast\in N^\ast$ and $n\in N$. It follow that
$$
\mathrm{Im}(q^\ast)=\{n^\ast\in N^\ast\mid \la(K^\ast)^+\cdot n^\ast, N\ra=0\}
=\{n^\ast\in N^\ast\mid \forall k^\ast\in K^\ast,\;\;k^\ast\cdot n^\ast=\la k^\ast, 1\ra n^\ast\}
$$
is exactly the space of invariants of the left $K^\ast$-module $N^\ast$. Then according to \cite[Lemma 1.7.2(1)]{Mon93}, we find $\mathrm{Im}(q^\ast)= N^\ast_\mathrm{coinv}$ as a consequence.
\end{proof}

We end this paper by establishing the following tensor equivalence,
and
Schauenburg's characterization (Lemma \ref{lem: sch94})
is exactly the situation
when $K=H$ and $\sigma$ is the evaluation.

\begin{proposition}\label{prop: four equiv KYDH}
There is an equivalence of finite tensor categories
\begin{equation}\label{eqn: four equiv KYDH}
 {}_K^K\M^K_H\approx {}^K\mathfrak{YD}_H,\;\;\; M\mapsto M_\mathrm{coinv},
\end{equation}
which sends each $M\in{}_K^K\M_H^K$ to the space $M_\mathrm{coinv}$ of its coinvariants as a right $K$-comodule, with structures as follows:
\begin{itemize}
\item
The right $H$-module structure $\triangleleft$ is given by
\begin{equation}\label{eqn: rH of McoK}
m\triangleleft h=\sum S^{-1}(\sigma_r(h_{(2)}))\cdot m\cdot h_{(1)}\;\;\;\;\;\;\;\;
(\forall m\in M_\mathrm{coinv},\;\forall h\in H);
\end{equation}

\item
The left $K$-comodule structure inherits from $M$.
\end{itemize}
\end{proposition}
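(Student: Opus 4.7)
The plan is to realize the desired equivalence as the composition of two contravariant equivalences established earlier in the paper, and then identify the resulting object with $M_{\mathrm{coinv}}$ by means of Lemma \ref{lem:coinv}. Concretely, the contravariant tensor equivalence $F_1:{}^K_K\M^K_H\to{}_H\mathfrak{YD}^K$, $M\mapsto\overline{M^\ast}=M^\ast/(M^\ast\cdot(K^\ast)^+)$ provided by Theorem \ref{them: cc}, composed with the contravariant tensor equivalence $F_2:{}_H\mathfrak{YD}^K\to{}^K\mathfrak{YD}_H$, $V\mapsto V^\ast$ from Lemma \ref{them: YDmod mce}, yields a covariant tensor equivalence $F=F_2\circ F_1:{}^K_K\M^K_H\to{}^K\mathfrak{YD}_H$ sending $M$ to $(\overline{M^\ast})^\ast$.

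To identify $(\overline{M^\ast})^\ast$ with $M_{\mathrm{coinv}}$, I would set $N:=M^\ast$ and equip it with the left $K$-comodule structure corresponding (via finite-dimensionality) to the right $K^\ast$-module structure that was used to form the quotient $\overline{M^\ast}$; this left $K$-coaction on $M^\ast$ is precisely the one dual to the right $K$-coaction of $M$. Lemma \ref{lem:coinv} then identifies $(\overline{M^\ast})^\ast=(N/(N\cdot(K^\ast)^+))^\ast$ with $N^\ast_{\mathrm{coinv}}$; and since $N^\ast=M^{\ast\ast}\cong M$ inherits exactly the original right $K$-coaction of $M$, this space of coinvariants is $M_{\mathrm{coinv}}$ as defined in the proposition. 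Moreover, the claim that the left $K$-comodule structure on $F(M)=M_{\mathrm{coinv}}$ is inherited from that of $M$ is immediate: the left $K$-coaction on $M^\ast$ (dual to the left $K$-action of $M$) survives the quotient to yield the left $K^\ast$-comodule structure on $\overline{M^\ast}$, and its dualization recovers the left $K$-coaction of $M$ restricted to $M_{\mathrm{coinv}}$.

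The remaining task is to verify that the right $H$-action on $F(M)=M_{\mathrm{coinv}}$ inherited through the composition $F_2\circ F_1$ coincides with the formula $m\triangleleft h=\sum S^{-1}(\sigma_r(h_{(2)}))\cdot m\cdot h_{(1)}$. For this, I would trace the $H$-action step by step: (i) the right $H^\ast$-coaction on $M^\ast$, which by (\ref{eqn: lrcomod M* relation}) satisfies $\sum\langle m^{\ast(0)},m\rangle\langle m^{\ast(1)},h\rangle=\langle m^\ast,m\cdot h\rangle$, together with the left $K^\ast$-action dual to the left $K$-action of $M$, assembles into a right $K^{\ast\cop}\otimes H^\ast$-comodule structure on $M^\ast$ via (\ref{eqn: rK*coptH*com of M}); (ii) descending to the quotient $\overline{M^\ast}$ and invoking Lemma \ref{lem: Rep cong} yields the $K^{\ast\cop}\bowtie_\sigma H$-action $(k^\ast\bowtie h)\cdot\bar m^\ast=\sum k^\ast\bar m^{\ast(0)}\langle \bar m^{\ast(1)},h\rangle$; (iii) transporting through the isomorphism of Corollary \ref{cor: YD iso repqd} gives a left $H$-action on $\overline{M^\ast}$; and (iv) applying Lemma \ref{them: YDmod mce}, the right $H$-action on the dual $(\overline{M^\ast})^\ast\cong M_{\mathrm{coinv}}$ is produced by (\ref{eqn: v* cdot h}). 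Computing the pairing $\langle F_2(F_1(M))\ni m\triangleleft h,\bar m^\ast\rangle$ via this chain, using $\sigma(k^\ast,-)=\sigma_l(k^\ast)$ and the Hopf pairing identities of Subsection 2.1, should reduce to $\langle S^{-1}(\sigma_r(h_{(2)}))\cdot m\cdot h_{(1)},\bar m^\ast\rangle$, which is the required formula.

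The main obstacle is the bookkeeping in this last verification: one must simultaneously track the left and right $K$-coactions of $M$ (which become right and left $K^\ast$-actions on $M^\ast$, one of which is quotiented out and the other dualized back), the right $H$-action of $M$ (which becomes a right $H^\ast$-coaction on $M^\ast$, then a quantum-double action, then a left $H$-action, then finally a right $H$-action on $M_{\mathrm{coinv}}$), and the Hopf algebra map $\sigma_r:H\to K$ together with the antipode $S^{-1}$ that enters through the compatibility (\ref{eqn: lKcomodrHmod of YD}) of ${}^K\mathfrak{YD}_H$. Once the structures match, the monoidal comparison is essentially formal because both $F_1$ and $F_2$ are already tensor equivalences (in the contravariant sense) with monoidal structures provided by (\ref{J_{M, N}}) and (\ref{J_{V, W}}), so the composite is a covariant tensor equivalence with the evident compatibility isomorphism $(M\square_K N)_{\mathrm{coinv}}\cong M_{\mathrm{coinv}}\otimes N_{\mathrm{coinv}}$ (where the tensor product on the right carries the ${}^K\mathfrak{YD}_H$-structure of Lemma \ref{lem: tps of YD mod}(2)).
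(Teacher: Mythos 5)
Your proposal follows essentially the same route as the paper: compose the contravariant equivalence $M\mapsto\overline{M^\ast}$ from Theorem \ref{them: cc} with the contravariant duality ${}_H\mathfrak{YD}^K\approx{}^K\mathfrak{YD}_H$ of Lemma \ref{them: YDmod mce}, identify $(\overline{M^\ast})^\ast$ with $M_{\mathrm{coinv}}$ via Lemma \ref{lem:coinv}, and then check that the induced right $H$-action is $m\triangleleft h=\sum S^{-1}(\sigma_r(h_{(2)}))\cdot m\cdot h_{(1)}$. The only part you leave as a sketch is the final pairing computation; there the key point (which the paper makes explicit) is that the left $H$-action on the quotient is $h\cdot\overline{m^\ast}=\sum\overline{m^\ast_{(0)}}\langle m^\ast_{(1)},\iota(h)\rangle$ with $\iota(h)=\sum\sigma_r(S^{-1}(h_{(2)}))\otimes h_{(1)}$, which is precisely where the $S^{-1}\circ\sigma_r$ twist enters before dualizing.
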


\begin{proof}
The desired structures on $M_\mathrm{coinv}$ are clearly well-defined.

Note that the right $K$-coaction of $M$ induces canonically the left $K$-coaction of $M^\ast$. Then according to Lemma \ref{lem:coinv}, we have a linear isomorphism
\begin{equation}\label{eqn: p*}
q^\ast:(\overline{M^\ast})^\ast=\left(M^\ast/(M^\ast\cdot(K^\ast)^+)\right)^\ast
\cong M^\dast_\mathrm{coinv}
\end{equation}
induced by the quotient map
$$q:M^\ast\rightarrow \overline{M^\ast}=M^\ast/(M^\ast\cdot(K^\ast)^+),
\;\;m^\ast\mapsto \overline{m^\ast}.$$
Here, $M^\dast_\mathrm{coinv}$ is the space of coinvariants of the right $K$-comodule $M^\dast$ which is in fact canonically isomorphic to $M$.

From now on, we make identification $M^\dast=M$ as objects in ${}_K^K\M^K_H$, and then it follows that $M^\dast_\mathrm{coinv}=M_\mathrm{coinv}$. However, one can find that $(\overline{M^\ast})^\ast$ is exactly the image of $M$ under the composition of (\ref{eqn: lHYDrK equiv lKYDrH}) and (\ref{eqn: four equiv HYDK}). Therefore, our goal is to show that $q^\ast$ is an isomorphism in ${}^K\YD_H$, which will imply that (\ref{eqn: four equiv KYDH}) is also a tensor functor.

Fur the purpose, consider the left $H$-action (resp. right $K$-action) on $M^\ast$ induced by the right $H$-action (resp. left $K$-action) on $M\in{}_K^K\M^K_H$, namely, sent by (\ref{eqn: four* cong four inverse}). Thus we can write
\begin{equation}\label{eqn: h m* m}
\begin{array}{r}
\la h\cdot m^\ast,m\ra=\la m^\ast,m\cdot h\ra
\;\;\;\;\text{and}\;\;\;\;
\la m^\ast\cdot k, m\ra=\la m^\ast,k\cdot m\ra
\;\;\;\;\;\;\;\;\;\;\;\;\;\;\;\;\;\;\;\;\;\;\;\;\;\;\;\;\;\;\;\;\;\;\;\;
  \\
(\forall h\in H,\;\forall k\in K,\;\forall m^\ast\in M^\ast,\;\forall m\in M),
\end{array}
\end{equation}
or equivalently with the notations (\ref{eqn: comod P*}) in Remark \ref{rmk: four cong four*}:
\begin{equation}\label{eqn: lH M*}
h\cdot m^\ast=\sum m^{\ast(0)}\la m^{\ast(1)}, h\ra,
\;\;\;\;
m^\ast\cdot k=\sum\la m^{\ast(-1)},k\ra m^{\ast(0)}
\;\;\;\;
(\forall h\in H,\;\forall k\in K,\;\forall m^\ast\in M^\ast).
\end{equation}
Besides,
$\overline{M^\ast}\in\Rep(K^{\ast\cop}\bowtie_\sigma H)$ should also be a left $H$-module whose structure is given due to \cite[(4.15) and (3.14)]{Li23} by
\begin{eqnarray*}
h\cdot \overline{m^\ast}
&:=& \sum \overline{m_{(0)}^\ast}\la m_{(1)}^\ast,\iota(h)\ra
\overset{(\ref{def: iota})}=
\sum \overline{m_{(0)}^\ast}
  \big\la m_{(1)}^\ast,\sigma_r(S^{-1}(h_{(2)}))\otimes h_{(1)}\big\ra  \\
&\overset{(\ref{eqn: rK*coptH*com of M})}=&
\sum \la m^{\ast(-1)},\sigma_r(S^{-1}(h_{(2)}))\ra
  \overline{m^{\ast(0)}}\la m^{\ast(1)}, h_{(1)}\ra  \\
&\overset{(\ref{eqn: lH M*})}=&
\overline{\sum h_{(1)}\cdot m^\ast\cdot\sigma_r(S^{-1}(h_{(2)}))}
\end{eqnarray*}
for any $h\in H$ and $m^\ast\in M^\ast$. As a consequence, for any $f\in (\overline{M^\ast})^\ast$, we find that
\begin{eqnarray*}
\la q^\ast(f\cdot h),m^\ast\ra
&\overset{(\ref{eqn: p*})}=&
\la f\cdot h,\,\overline{m^\ast}\ra
\overset{(\ref{eqn: v* cdot h})}=
\la f,h\cdot\overline{m^\ast}\ra
=
\Big\la f,\,\overline{\sum h_{(1)}\cdot m^\ast\cdot\sigma_r(S^{-1}(h_{(2)}))}\Big\ra  \\
&\overset{(\ref{eqn: p*})}=&
\Big\la q^\ast(f),\,\sum h_{(1)}\cdot m^\ast\cdot\sigma_r(S^{-1}(h_{(2)}))\Big\ra,
\\
&\overset{(\ref{eqn: h m* m})}=&
\Big\la \sum\sigma_r(S^{-1}(h_{(2)}))\cdot q^\ast(f)\cdot h_{(1)},\,m^\ast\Big\ra
\\
&=&
\la q^\ast(f)\triangleleft h,\,m^\ast \ra.
\end{eqnarray*}
Therefore, $q^\ast$ (\ref{eqn: p*}) is a right $H$-module map.

On the other hand, note that $(\overline{M^\ast})^\ast\in {}^K\YD_H$ and that $M_\mathrm{coinv}$ is a left $K$-submodule of $M=M^\dast\in {}_K^K\M^K_H$.
Thus with our notations used before, we should verify that
\begin{equation}\label{eqn: leftKcomodmap}
\sum f_{\la-1\ra}\otimes q^\ast(f_{\la0\ra})
=\sum q^\ast(f)^{(-1)}\otimes q^\ast(f)^{(0)}
\;\;\;\;\;\;\;\;(\forall f\in(\overline{M^\ast})^\ast )
\end{equation}
holds in $K\otimes M^\dast$, which means that $q^\ast$ preserves right $K$-coactions.
To this end,
it follows from the sentence before \cite[(4.15)]{Li23} as well as \cite[(3.12)]{Li23} that $\overline{M^\ast}$ is defined to be the quotient left module of $M^\ast$ over $K^{\ast\cop}$ (or $K^\ast$), and one can write
\begin{equation}\label{eqn: quotientleftmod}
(k^\ast\bowtie 1)\cdot \overline{m^\ast}=\overline{k^\ast\cdot m^\ast}\;\;\;\;\;\;\;\;
(\forall k^\ast\in K^\ast,\;\forall m^\ast\in M^\ast),
\end{equation}
where the left $K^\ast$-action on $M^\ast$ is given by (\ref{eqn: mod P*}).
Then we compare the images of both sides of (\ref{eqn: leftKcomodmap}) under any $k^\ast\otimes m^\ast$ $(k^\ast\in K^\ast,\;m^\ast\in M^\ast)$ in the following calculation:
\begin{eqnarray*}
\sum \la k^\ast,f_{\la-1\ra}\ra\la m^\ast,q^\ast(f_{\la0\ra})\ra
&\overset{(\ref{eqn: p*})}=&
\sum \la k^\ast,f_{\la-1\ra}\ra\la f_{\la0\ra},\overline{m^\ast}\ra
\overset{(\ref{eqn: lKcomod of V*})}=
\sum \la f,\overline{m^\ast}_{\la0\ra}\ra\la k^\ast,\overline{m^\ast}_{\la1\ra}\ra \\
&\overset{(\ref{eqn: YD iso repqd action})}=&
\sum \la f,(k^\ast\bowtie 1)\cdot\overline{m^\ast} \ra
\overset{(\ref{eqn: quotientleftmod})}=
\sum \la f,\,\overline{k^\ast\cdot m^\ast} \ra  \\
&\overset{(\ref{eqn: p*})}=&
\sum \la k^\ast\cdot m^\ast,q^\ast(f)\ra
\overset{(\ref{eqn: mod P*})}=
\sum \la m^\ast,q^\ast(f)\leftharpoonup k^\ast\ra
\\
&\overset{(\ref{eqn: hit action})}=&
\sum \la k^\ast,q^\ast(f)^{(-1)}\ra\la m^\ast,q^\ast(f)^{(0)}\ra
\end{eqnarray*}
for any $f\in (\overline{M^\ast})^\ast$, and hence Equation (\ref{eqn: leftKcomodmap}) is concluded.
\end{proof}

\begin{remark}
As the composition of quasi-inverses of (\ref{eqn: lHYDrK equiv lKYDrH}) and (\ref{eqn: four equiv HYDK})
sends each $V\in {}^K\YD_H$ to
$(V^\ast\otimes K^\ast)^\ast$, which can be isomorphic to $V\otimes K\mapsfrom V$ as objects in ${}_K^K\M^K_H$.
Therefore,
one may verify that the tensor functor (\ref{eqn: four equiv KYDH}) has quasi-inverse of form $V\otimes K\mapsfrom V$, and this is a special case of \cite[Theorem 3.1]{BDRV98} as a $\k$-linear abelian equivalence.
\end{remark}

%
%
%

\section*{Acknowledgements}
The work was partially supported by National Natural Science Foundation of China [grant numbers 12301049, 12371017].

The authors would like to thank Professor Xiaolan Yu for useful discussions at her seminars.
Also, the authors are grateful to Professor Shenglin Zhu for backgrounds and suggestions.


\end{document}